\patchcmd{\@makechapterhead}{\large}{\normalsize}{}{}% for \chapter
\patchcmd{\@makechapterhead}{\large}{\normalsize}{}{}% for \chapter
\patchcmd{\@makeschapterhead}{\normalsize}{\normalsize}{}{}% for \chapter*
\g@addto@macro\normalsize{\setlength\abovedisplayskip{4pt}}
\g@addto@macro\normalsize{\setlength\belowdisplayskip{4pt}}
\newtheorem{theorem}{Theorem}[section]
\newtheorem{lemma}{Lemma}[section]
\newtheorem{corollary}{Corollary}[section]
\newtheorem{proposition}{Proposition}[section]
\newtheorem{definition}{Definition}[section]
\newtheorem{remark}{Remark}[section]
\let\oldref\ref
\renewcommand{\ref}[1]{(\oldref{#1})}  % stupid kludge for laTeX
\renewcommand{\eqref}[1]{(\oldref{#1})} %BB03dec18 
\font\titlefonrm=ptmb scaled \magstep3
\newbox\boxaddrone \newbox\boxaddrtwo
\def\dH#1{\dot H^{#1}(\Omega)}
\newcommand{\revision}[1]{{#1}}
\newcommand{\revrev}[1]{{#1}}
\begin{document}

\title{\titlefonrm On the identification of a nonlinear term in a reaction-diffusion equation}
\author{Barbara Kaltenbacher\footnote{
Department of Mathematics,
Alpen-Adria-Universit\"at Klagenfurt.
barbara.kaltenbacher@aau.at.}
\and
William Rundell\footnote{
Department of Mathematics,
Texas A\&M University,
%College Station,
Texas 77843. % USA.
rundell@math.tamu.edu}
}
\maketitle

\begin{abstract}
Reaction-diffusion equations are one of the most common
partial differential equations used to model physical phenomenon.
They arise as the combination of two physical processes: a driving force $f(u)$
that depends on the state variable $u$ and a diffusive mechanism that spreads
this effect over a spatial domain.
The canonical form is $u_t - \triangle u = f(u)$.
Application areas include chemical processes, heat flow models and population
dynamics.  As part of the model building, assumptions are made about the
form of $f(u)$ and these inevitably contain various physical constants.
The direct or forwards problem for such equations is now very well-developed
and understood, especially when the diffusive mechanism is governed by
Brownian motion resulting in an equation of parabolic type.

However, our interest lies in the inverse problem of recovering the reaction
term $f(u)$ not just at the level of determining a few parameters in a known
functional form, but recovering the complete functional form itself.
To achieve this we set up the usual paradigm for the parabolic equation
where $u$ is subject to both given initial and boundary data, then  prescribe
overposed data consisting of the solution at a later time $T$.
For example, in the case of a population model this amounts to census data
at a fixed time.
Our approach will be two-fold.
First we will transform the inverse problem into an equivalent nonlinear
mapping from which we seek a fixed point.
We will be able to prove important features of this map such as
a self-mapping property and give conditions under which it is contractive.
Second, we consider the direct map from $f$ through the partial differential
operator to the overposed data.  We will investigate Newton schemes for
this case.

Classical, Brownian motion diffusion is not the only version and in recent
decades various anomalous processes have been used to generalize this case.
Amongst the most popular is one that replaces the usual time derivative by a
subdiffusion process based on a fractional derivative of order $\alpha \leq 1$.
We will also include this model in our analysis.
The final section of the paper will show numerical reconstructions that
demonstrate the viability of the suggested approaches.
This will also include the dependence of the inverse problem on both $T$
and $\alpha$.\end{abstract}

%%%%%%%%%%%%%%%%%%%%%%%%%%%%%%%%%%%%%%%%%%%%%%%%%%%%
\section{Introduction}\label{sect:introduction}

Before the thorough mathematical theory of existence, uniqueness
and regularity results for reaction-diffusion equations initiated during
 the 1960's,
there had been an extensive literature in applications dating from the
1930's.  These include the work of Fisher in considering the Verhulst
logistic equation together with a spatial diffusion,
$u_t - ku_{xx} = au(1-u)$  and that of
Kolmogorov, Petrovskii and Piskunov in similar models now collectively
referred to as the Fisher-KPP theory;
more complex polynomial terms occurring in combustion theory due to
Zeldovich and Frank-Kamenetskii.
\revision{
The class of reaction-diffusion equations describe the behaviour of a
wide range of chemical systems where diffusion of material competes with
production by a chemical reaction.
They also describe systems where heat is produced depending on the
local temperature and diffuses away by some mechanism.
These systems can also be based on particles as in the 
nuclear diffusion equation.
}
% \Margin{Ref.2, 1.,4.}
For general references here we note the books
\cite{CantrellCosner:2003, Grindrod:1996, Murray:2002} as well as the chapter \cite{Kuttler17}.
These equations and their generalization can be described as follows.

Let $\Omega$ be a bounded, simply-connected region in $\mathbb{R}^d$
with smooth boundary $\partial\Omega$, and let $\mathbb{L}$ be a
uniformly elliptic operator of second order with with $L^\infty$ coefficients.
\begin{equation}\label{eqn:direct_prob_parabolic}
u_t(x,t) - \mathbb{L} u(x,t) = f(u(x,t)) + r(x,t),
\qquad (x,t)\in\Omega\times(0,T) 
\end{equation}
and subject to the initial and boundary conditions
\begin{equation}\label{eqn:initial_bdry_conds}
\begin{aligned}
\partial_\nu u(x,t) +\gamma(x) u(x,t)&= h(x,t)
\quad (x,t)\in\partial\Omega\times(0,T) \\
u(x,0) &= u_0 \quad x\in\Omega \\
\end{aligned}
\end{equation}

In \eqref{eqn:direct_prob_parabolic} the usual,
direct problem is to know both the diffusion
operator $L$, the nonhomogeneous linear term $r$
and the reaction term $f(u)$.
However, even in  the earliest
applications it was appreciated that the diffusion term $-k\triangle u$
might have the diffusion coefficient $k$ as an unknown and while the
specific form of $f(u)$ was assumed known from the underlying model,
specific parameters may not be.
In our case we assume that $f$ is unknown (save that it be a function
of $u$ alone).  Thus we are interested in the inverse problem
of recovering $f$ from additional data that would be overposed if in fact
$f$ were known.
There has been previous work along such lines.
For example, in a series of papers,
\cite{PilantRundell:1986,PilantRundell:1987,PilantRundell:1988},
Pilant and Rundell
investigated the recovery of $f$ from overposed data consisting of a
time trace of the solution $u$ at a given point $x_0\in\partial\Omega$,
namely $u(x_0,t)$ for all $t>0$.
See also, \cite{DuChateauRundell:1985}.
We will take a very different situation here by assuming that one is able
to measure in the spatial direction by taking a snapshot at a fixed
later time $T$ and so our overposed data will be
\begin{equation}\label{eqn:overposed_data}
 u(x,T) = g(x).
\end{equation}
We also note that in the case of such data, if the reaction term is of
the form $q(x)f(u)$ where $q$ is unknown but the actual form of the
nonlinearity $f(u)$ is known, then it is possible to recover the
spatial component in a unique way, \cite{KaltenbacherRundell:2019b}.

Exchanging the time derivative in \eqref{eqn:direct_prob_parabolic}
for one of fractional order
adds further complexity but also of considerable physical interest
as we are modifying in a fundamental way the diffusion process
(Fick's law) to one of a so-called 
anomalous diffusion mechanism that would lead to a
non-local, time-fractional equation.
\begin{equation}\label{eqn:direct_prob_fractional}
D_t^\alpha u(x,t) - \mathbb{L} u(x,t) = f(u(x,t))+r(x,t),\quad (x,t)\in\Omega\times(0,T)
\end{equation}
and subject  to the same initial and boundary conditions \eqref{eqn:initial_bdry_conds}.
Here we have also included the possibility of an additional (known)
inhomogeneity, which can be made use of to drive the evolution process.

By $D_t^\alpha$ we mean the Djrbashian-Caputo derivative of order $\alpha$
which is defined to be
\begin{equation}\label{eqn:DC_derivative}
{}_a D_t^\alpha u =
\frac{1}{\Gamma(1-\alpha)}\int_{a}^{t}\frac{u'(s)}{(t-s)^{\alpha}}ds
\end{equation}
and since this is a non-local operator,
the designation of the starting point $t=a$ is essential;
in our case we shall take $a=0$.
Of course there are other possibilities for anomalous diffusion models,
 but the one chosen is both standard in most applications
and has an existing rich mathematical theory.

Examples of the above include applications in many of the areas in which
traditional reaction diffusion equations have featured over the last
one hundred years and form a natural generalisation.
The central feature of a reaction-diffusion model is the interplay
between the two central processes that are in some sense in competition.
One of the main themes of this paper is to investigate the role
the different diffusion models play in our ability to recover the reaction term
$f(u)$.

Briefly, the outline of the paper is as follows.
In the next section we collect crucial main existence and regularity
results for the direct problem.
These are well-known for the parabolic case but we require an
addition to the existing literature for the case of
\eqref{eqn:direct_prob_fractional}.
%In the subsequent section we introduce a map $\mathbb{F}$
%from the unknown $f$ to the final data $g$ given by
%\eqref{eqn:overposed_data} and give conditions under
%which this mapping is well-defined.
%Then we show that a certain iteration scheme involving $\mathbb{F}$
%is also well-defined and in fact converges to a unique solution $f$
%provided suitable conditions are imposed,
Sections \oldref{sect:iteration} and \oldref{sec:Newton_methods} 
are devoted to two iterative approaches for numerically solving the 
inverse problem, a fixed point scheme and a Newton type method.
While these methods are formulated in the general, possibly higher dimensional setting, the analysis will be restricted to the spatially one-dimensional case, due to the need for certain Sobolev embeddings.
In the last section we show some numerical examples illustrating
the above and use this to bring out certain features of the problem.
In particular we investigate the difference between the classical
and fractional diffusion operators and the role that the value of the final
time $T$ plays.

\def\qq{\sigma}

\section{The direct problem}\label{sect:direct}

\revision{
In this section we provide the results and tools for the direct problem, i.e., solution of the initial boundary value problem 
\eqref{eqn:initial_bdry_conds}, \eqref{eqn:direct_prob_fractional}, that will be needed later on in the reconstruction methods.
These are, first of all, a representation of the solution to the inhomogeneous linear problem by means of Mittag-Leffler functions as well as  eigenvalues and eigenfunctions of the operator $-\mathbb{L}$.
Moreover, versions of the familiar Gronwall's inequality for
ordinary differential equations in function space are required
for the fractional case and we develop these below, first for
the linear case, then later for the semilinear.
These will then be used to provide
estimates needed for the semilinear fractional operator.
}
% \Margin{Ref.2, 3.}

\subsection{The Solution Representation and the Mittag-Leffler function}\label{subsect:sol_rep_mlf}

Throughout this paper we will assume that 
\revision{
$\mathbb{L}=\sum_{ij} \partial_{x_i} (a_{ij} \partial_{x_j})+c$ 
}
% \Margin{Ref.1, Minor 1}
is a symmetric uniformly elliptic operator with coefficients $a_{ij}\in W^{1,\infty}(\Omega)$, $b_i,c\in L^\infty(\Omega)$, and that $\Omega$ is a bounded and convex or $C^{1,1}$ domain.

We define an operator $A$ in $L^2(\Omega)$ by
\begin{equation*}
  (Au)(x) = (-\mathbb{L}u)(x)\quad x\in\Omega,
\end{equation*}
where $\mathbb{L}$ is equipped with impedance boundary conditions,
with its domain $D(A)\subseteq H^2(\Omega)$.
Then the fractional power
$A^\qq$ is defined for any $\qq\in\mathbb{R}$.
Since $\mathbb{L}$ is a symmetric uniformly elliptic operator, the spectrum of $A$ is entirely composed
of eigenvalues and counting according to the multiplicities, we can set $0<\lambda_1\leq
\lambda_2\ldots$.
By $\varphi_j\in H^2(\Omega)$, we denote the $L^2(\Omega)$ orthonormal
eigenfunctions corresponding to $\lambda_j$.
Next we introduce a space $\dH s$ by
\begin{equation}\label{eqn:Hdot}
  \dH s = \left\{v\in L^2(\Omega): \sum_{j=1}^\infty \lambda_j^{s}|(v,\varphi_j)|^2<\infty\right\}\,,
\end{equation}
which is a Hilbert space with the norm
%\begin{equation*}
$  \|v\|_{\dH s}^2=\sum_{j=1}^\infty \lambda_j^{s}|(v,\varphi_j)|^2$.
%\end{equation*}
By definition, we have the following equivalent form:
\begin{equation*}
  \|v\|_{\dH s}^2 = \sum_{j=1}^\infty |(v,\lambda_j^\frac{s}{2}\varphi_j)|^2 = \sum_{j=1}^\infty (v,A^\frac{s}{2}\varphi_j)^2=\sum_{j=1}^\infty (A^\frac{s}{2}v,\varphi_j)^2=\|A^\frac{s}{2}v\|_{L^2(\Omega)}^2.
\end{equation*}
We have $\dH s\subseteq H^{s}(\Omega)$ for $s\in[0,2]$.
Since $\dH s \subseteq L^2(\Omega)$, identifying
the dual $(L^2(\Omega))^\prime$ with itself, we have $\dH s \subseteq L^2(\Omega)
\subseteq (\dH s)^\prime$.
Henceforth, we set $\dH {-s}=(\dH s)^\prime$,
which consists of bounded linear functionals on $\dH s$.

The solution $u$ to the linear problem 
\begin{equation}\label{eqn:subdiff}
\begin{aligned}
D_t^\alpha u(x,t) - \mathbb{L} u(x,t) &= r(x,t),\qquad (x,t)\in\Omega\times(0,T)\\
\partial_\nu u(x,t) +\gamma(x) u(x,t)&= h(x,t),\qquad (x,t)\in\partial\Omega\times(0,T) \\
u(x,0) &= u_0 \qquad x\in\Omega \\
\end{aligned}
\end{equation}
can be represented by, \cite{JinRundell:2015}
\begin{equation}\label{eqn:sol_representation}
  \begin{aligned}
    u(x,t) & = \sum_{j=1}^\infty E_{\alpha,1}(-\lambda_j t^\alpha) (u_0,\varphi_j)\varphi_j(x)\\
       & \quad + \sum_{j=1}^\infty \int_0^t(t-s)^{\alpha-1}E_{\alpha,\alpha}(-\lambda_j(t-s)^\alpha)(r(x,s),\varphi_j)\,ds\varphi_j(x).
  \end{aligned}
\end{equation}

Here $E_{\alpha,\beta}(z)$ is the Mittag-Leffler function
\cite{Mittag-Leffler:1903} defined by
\begin{equation}\label{eqn:mlf}
  E_{\alpha,\beta}(z) = \sum_{k=0}^\infty \frac{z^k}{\Gamma(\alpha k+\beta)}\qquad  \alpha>0, \ \beta\in\mathbb{R},\quad z\in \mathbb{C},
\end{equation}
This is an entire function of order $1/\alpha$ and type~1.
For the general properties of this function we suggest the references
\cite{MainardiGorenflo:2000,SamkoKilbasMarichev:1993}.
The definitive references  are due to Djrbashian, 
\cite{Dzharbashyan:1964,Djrbashian:1970,Djrbashian:1993}
although not so immediately accessible.
See also the tutorial on inverse problems for fractional operators,
\cite{JinRundell:2015}.
In the case $\alpha=1$, $E_{\alpha,1}(z)=E_{\alpha,\alpha}(z)=e^z$ and 
the model \eqref{eqn:subdiff} as well its solution formula 
\eqref{eqn:sol_representation} recovers the standard parabolic equation.

For our purposes, in fact for a large range of inverse problems
involving a subdiffusion process,
one of the key features of the Mittag-Leffler function
is its decay for large, real and negative argument.
The asymptotic behaviour as $z\to\infty$ in various sectors of the
complex plane $\mathbb{C}$ was first derived by Djrbashian
\cite{Dzharbashyan:1964}, and subsequently refined by many over the intervening
years.

\begin{lemma}\label{thm:mlf-asymptotic}
Let $\alpha\in(0,2)$, $\beta\in\mathbb{R}$, and $\mu\in(\alpha\pi/2,\min(\pi,\alpha\pi))$, and $N\in\mathbb{N}$.
Then for
$|\mathrm{arg}(z)|\leq \mu$ with $|z|\to\infty$, we obtain
\begin{equation}\label{eqn:mlf-asymp-+}
    E_{\alpha,\beta}(z) = \frac{1}{\alpha}z^{\frac{1-\beta}{\alpha}}e^{z^{\frac{1}{\alpha}}}-\sum_{k=1}^N\frac{1}{\Gamma(\beta-\alpha k)}\frac{1}{z^k} + O\left(\frac{1}{z^{N+1}}\right),
\end{equation}
and for $\mu \leq |\mathrm{arg}(z)|\leq \pi$ with $|z|\to\infty$
\begin{equation}\label{eqn:mlf-asymp--}
    E_{\alpha,\beta}(z) = -\sum_{k=1}^N\frac{1}{\Gamma(\beta-\alpha k)}\frac{1}{z^k} + O\left(\frac{1}{z^{N+1}}\right).
\end{equation}
In particular, for $x$ real and positive, $E_{\alpha,\beta}(-x)$ decays to
zero linearly as $x\to\infty$.
\end{lemma}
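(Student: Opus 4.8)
The plan is to deduce both expansions from a single Hankel-type contour integral for $E_{\alpha,\beta}$ and to distinguish the two regimes by whether a pole is crossed when the contour is deformed. Starting from \eqref{eqn:mlf}, I substitute Hankel's loop integral $\frac{1}{\Gamma(s)}=\frac{1}{2\pi i}\int_{\mathcal{H}}e^{w}w^{-s}\,dw$ (with $\mathcal{H}$ the loop around the negative real axis) for each reciprocal Gamma factor and sum the geometric series $\sum_{k\ge0}(z\,w^{-\alpha})^{k}$ on a loop on which $|w|>|z|^{1/\alpha}$; interchanging sum and integral is legitimate there because $|z|/|w|^{\alpha}$ stays bounded below $1$. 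This gives
\begin{equation*}
E_{\alpha,\beta}(z)=\frac{1}{2\pi i}\int_{\gamma(\epsilon,\delta)}\frac{e^{w}\,w^{\alpha-\beta}}{w^{\alpha}-z}\,dw,
\end{equation*}
where $\gamma(\epsilon,\delta)$ enters from $\infty e^{-i\delta}$ along $\arg w=-\delta$, follows the arc $|w|=\epsilon$, and leaves along $\arg w=\delta$, with $\epsilon>|z|^{1/\alpha}$, with $\delta\in(\tfrac{\pi}{2},\pi)$ to be fixed below, and with $w^{\alpha}$ the branch cut along $\arg w=\pi$. For $0<\alpha<2$ the only pole of the integrand that can enter is $w_{0}=z^{1/\alpha}$, at angle $\arg w_{0}=\arg(z)/\alpha$. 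The hypothesis $\mu<\min(\pi,\alpha\pi)$ forces $\mu/\alpha<\pi$, so $w_{0}$ lies on the principal sheet and can be separated from the cut; $\mu>\alpha\pi/2$ forces $\mu/\alpha>\pi/2$, which is what lets $\delta$ be taken with $\cos\delta<0$; and $\mu<\pi$ will put $z=-x$ into the second case.

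Next I split on $|\arg z|$. If $|\arg z|\le\mu$, fix any $\delta\in(\mu/\alpha,\pi)$; then $|\arg w_{0}|=|\arg z|/\alpha<\delta$, and deforming the arc radius from $\epsilon>|z|^{1/\alpha}$ down to some $\epsilon'<|z|^{1/\alpha}$ sweeps across $w_{0}$ exactly once, so
\begin{equation*}
E_{\alpha,\beta}(z)=\operatorname{Res}_{w=w_{0}}\frac{e^{w}w^{\alpha-\beta}}{w^{\alpha}-z}+\frac{1}{2\pi i}\int_{\gamma(\epsilon',\delta)}\frac{e^{w}w^{\alpha-\beta}}{w^{\alpha}-z}\,dw=\frac{1}{\alpha}z^{(1-\beta)/\alpha}e^{z^{1/\alpha}}+\frac{1}{2\pi i}\int_{\gamma(\epsilon',\delta)}\frac{e^{w}w^{\alpha-\beta}}{w^{\alpha}-z}\,dw,
\end{equation*}
which is exactly the leading term of \eqref{eqn:mlf-asymp-+}. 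If instead $\mu\le|\arg z|\le\pi$, fix $\delta\in(\tfrac{\pi}{2},\mu/\alpha)$; then $|\arg w_{0}|=|\arg z|/\alpha\ge\mu/\alpha>\delta$ (and when $\alpha<1$ and $|\arg z|$ is near $\pi$, $w_{0}$ is simply off the principal sheet), so the same deformation crosses no pole and no exponential term appears, matching \eqref{eqn:mlf-asymp--}. In both cases $E_{\alpha,\beta}(z)$ now equals (a possible residue term plus) a contour integral over $\gamma(\epsilon',\delta)$ with $\epsilon'<|z|^{1/\alpha}$.

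The core of the argument is the asymptotics of that leftover integral, using the exact identity $\frac{1}{w^{\alpha}-z}=-\sum_{k=1}^{N}\frac{w^{\alpha(k-1)}}{z^{k}}+\frac{w^{\alpha N}}{z^{N}(w^{\alpha}-z)}$. The $N$ explicit terms contribute $-z^{-k}\cdot\frac{1}{2\pi i}\int_{\gamma(\epsilon',\delta)}e^{w}w^{\alpha k-\beta}\,dw$; since $e^{w}w^{\alpha k-\beta}$ is holomorphic off the cut and absolutely integrable on the loop (here $\cos\delta<0$ is used), this loop may be deformed back to $\mathcal{H}$, and Hankel's formula collapses each term to $-z^{-k}/\Gamma(\beta-\alpha k)$, i.e. the algebraic sum common to \eqref{eqn:mlf-asymp-+} and \eqref{eqn:mlf-asymp--}. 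For the remainder term I split $\gamma(\epsilon',\delta)$ at radius $\rho=|z|^{1/\alpha}$: on the tails $|w|\gtrsim\rho$ the factor $|e^{w}|=e^{|w|\cos\delta}$ with $\cos\delta<0$ makes the contribution $O(e^{-c|z|^{1/\alpha}})$, negligible against every power of $|z|$; on $|w|\lesssim\rho$ one has $|w^{\alpha}-z|\gtrsim|z|$, so the prefactor $z^{-N}$, the bound $|w^{\alpha}-z|^{-1}\lesssim|z|^{-1}$, and the convergent integral $\int e^{|w|\cos\delta}|w|^{\alpha(N+1)-\beta}\,|dw|$ combine to $O(|z|^{-N-1})$. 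The one point that needs care is uniformity over the \emph{closed} sector, which is exactly why $\delta$ is pinned strictly inside $(\tfrac{\pi}{2},\mu/\alpha)$, respectively $(\mu/\alpha,\pi)$ — intervals nonempty only because of the stated bounds on $\mu$ — so that the ray directions $\arg w^{\alpha}=\pm\alpha\delta$ stay a fixed positive angle from $\arg z$, keeping $|w^{\alpha}-z|$ and the splitting radius under uniform control. I expect this uniformity to be the main (though routine) technical obstacle.

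Finally, the closing assertion is the case $z=-x$, $x>0$, of \eqref{eqn:mlf-asymp--}: since $\mu<\pi$, the angle $\pi=|\arg(-x)|$ lies in $[\mu,\pi]$, so taking $N=1$ gives $E_{\alpha,\beta}(-x)=\frac{1}{\Gamma(\beta-\alpha)}\,x^{-1}+O(x^{-2})$; hence $E_{\alpha,\beta}(-x)\to0$ at the rate $O(x^{-1})$ — precisely of order $x^{-1}$ whenever $\beta-\alpha$ is not a non-positive integer, the threshold case being the classical $\alpha=\beta=1$, where $E_{1,1}(-x)=e^{-x}$ in fact decays exponentially.
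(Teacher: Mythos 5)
Your argument is essentially correct, but note that the paper does not prove this lemma at all: it is quoted as a classical result of Djrbashian (with the citation to \cite{Dzharbashyan:1964}), so there is no in-paper proof to compare against. What you have written is the standard proof of that classical result: the Hankel-loop representation $E_{\alpha,\beta}(z)=\frac{1}{2\pi i}\int_{\gamma}\frac{e^{w}w^{\alpha-\beta}}{w^{\alpha}-z}\,dw$ obtained by summing the geometric series under the reciprocal-Gamma integral, the residue at $w_0=z^{1/\alpha}$ producing the exponential term exactly when $|\arg z|/\alpha<\delta$, the finite geometric expansion of $(w^{\alpha}-z)^{-1}$ collapsing term by term to $-z^{-k}/\Gamma(\beta-\alpha k)$ via Hankel's formula, and the remainder bound. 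Your bookkeeping of the role of the hypotheses on $\mu$ (nonemptiness of the two $\delta$-intervals $(\mu/\alpha,\pi)$ and $(\pi/2,\mu/\alpha)$, and the pole leaving the principal sheet when $|\arg z|>\alpha\pi$) is accurate. Two small points worth tightening if this were written out in full: for the uniform bound $|w^{\alpha}-z|\gtrsim|z|$ on the inner portion of the contour you should either fix the arc radius $\epsilon'$ independently of $z$ (permissible since $|z|\to\infty$) or take it as a fixed fraction of $|z|^{1/\alpha}$ and invoke the angular-separation estimate $|a-b|^2\geq(1-\cos\theta_0)(|a|^2+|b|^2)$ on the rays — you correctly identify this as the only real technical point. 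And your closing remark is a useful clarification of the lemma's loose phrase ``decays linearly'': the decay is $O(x^{-1})$ in general, of exact order $x^{-1}$ only when $1/\Gamma(\beta-\alpha)\neq0$, the excluded cases (e.g.\ $\alpha=\beta=1$) decaying faster.
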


The following useful estimate is a direct corollary of Lemma \oldref{thm:mlf-asymptotic}.
\begin{corollary}\label{cor:mlfbdd}
Let $0<\alpha<2$, $\beta\in\mathbb{R}$, and
$\pi\alpha/2<\mu<\min(\pi,\pi\alpha)$.
Then the following estimates hold
\begin{equation*}
  \begin{aligned}
    |E_{\alpha,\beta}(z)| & \leq c_1(1+|z|)^\frac{1-\beta}{\alpha}e^{\Re(z^\frac{1}{\alpha})} + \frac{c_2}{1+|z|}\quad |\arg(z)|\leq \mu,\\
    |E_{\alpha,\beta}(z)| & \leq \frac{c}{1+|z|}\quad \mu\leq |\arg(z)| \leq \pi.
  \end{aligned}
\end{equation*}
\end{corollary}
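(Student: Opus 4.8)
The plan is to combine the asymptotic expansions of Lemma \oldref{thm:mlf-asymptotic}, which describe $E_{\alpha,\beta}(z)$ only in the regime $|z|\to\infty$, with the elementary fact that $E_{\alpha,\beta}$ is an entire function (by the power series \eqref{eqn:mlf}) and hence bounded on compact sets, so as to patch these into a bound valid for all $z\in\mathbb{C}$. Concretely, I would fix $N=1$ in Lemma \oldref{thm:mlf-asymptotic} and choose a radius $R>1$ large enough that the error terms $O(1/z^{2})$ in \eqref{eqn:mlf-asymp-+} and \eqref{eqn:mlf-asymp--} are dominated by $C_0/|z|^{2}$ for every $z$ with $|z|\ge R$, uniformly in $\arg z$ over the respective closed sectors. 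Then I would estimate separately on $\{|z|\ge R\}$ and on the disk $\{|z|\le R\}$.

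On $\{|z|\ge R\}\cap\{|\arg z|\le\mu\}$ I would use the principal branch of $z^{1/\alpha}$ and $z^{(1-\beta)/\alpha}$, which is analytic there since $\mu<\pi$, so that $|z^{(1-\beta)/\alpha}|=|z|^{(1-\beta)/\alpha}$ and $|e^{z^{1/\alpha}}|=e^{\Re(z^{1/\alpha})}$. Taking absolute values in \eqref{eqn:mlf-asymp-+} with $N=1$ then gives $|E_{\alpha,\beta}(z)|\le \tfrac1\alpha|z|^{(1-\beta)/\alpha}e^{\Re(z^{1/\alpha})}+\big(\tfrac{1}{|\Gamma(\beta-\alpha)|}+\tfrac{C_0}{R}\big)\tfrac1{|z|}$ (with the convention that the middle term is $0$ when $\Gamma(\beta-\alpha)$ has a pole), and since $|z|\ge R>1$ one has $1/|z|\le(1+R^{-1})/(1+|z|)$ and $|z|^{(1-\beta)/\alpha}\le\big((1+R)/R\big)^{|1-\beta|/\alpha}(1+|z|)^{(1-\beta)/\alpha}$; this yields the first claimed inequality with explicit $c_1,c_2$. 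On $\{|z|\ge R\}\cap\{\mu\le|\arg z|\le\pi\}$ the exponential term is absent, and \eqref{eqn:mlf-asymp--} gives directly $|E_{\alpha,\beta}(z)|\le\big(\tfrac{1}{|\Gamma(\beta-\alpha)|}+\tfrac{C_0}{R}\big)/|z|\le c/(1+|z|)$, which is the second inequality.

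It remains to cover the disk $\{|z|\le R\}$. Since $E_{\alpha,\beta}$ is entire it is continuous, hence bounded on this compact set by some $M<\infty$, so $|E_{\alpha,\beta}(z)|\le M\le M(1+R)/(1+|z|)$ there. This already gives the second inequality on the part of the disk with $\mu\le|\arg z|\le\pi$, and on the part with $|\arg z|\le\mu$ it is absorbed into the $c_2/(1+|z|)$ term of the first inequality, the term $c_1(1+|z|)^{(1-\beta)/\alpha}e^{\Re(z^{1/\alpha})}$ being nonnegative; note that this is precisely why, for $\beta>1$, I use the bounded-$|z|$ estimate rather than the $|z|^{(1-\beta)/\alpha}$ factor near the origin, where the latter would blow up. Finally I would take $c_1,c_2,c$ to be the maxima of the constants produced on the two regions.

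The only genuinely delicate point is the bookkeeping with the fractional powers $z^{1/\alpha}$ and $z^{(1-\beta)/\alpha}$ and the branch cut along the negative real axis — which is exactly why the exponential estimate is stated only on the sector $|\arg z|\le\mu<\pi$, where the principal branch is unambiguous and $|z^{s}|=|z|^{s}$ and $|e^{z^{1/\alpha}}|=e^{\Re(z^{1/\alpha})}$ hold without ambiguity (and where the exponential factor is automatically bounded by $1$ in the subsector $\mu>|\arg z|>\alpha\pi/2$, so no sharpness is lost). Everything else is the routine "asymptotics for large $|z|$, continuity on a compact set" patching argument, and I do not expect any real obstacle.
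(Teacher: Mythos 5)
Your argument is correct and is exactly the standard way to pass from the asymptotic expansions of Lemma \oldref{thm:mlf-asymptotic} to the uniform bounds of the corollary: the paper itself gives no proof, stating the result as a ``direct corollary'' of that lemma, and the large-$|z|$/compact-disk patching you describe (with the branch bookkeeping $|z^{s}|=|z|^{s}$ for real $s$ and $|e^{z^{1/\alpha}}|=e^{\Re(z^{1/\alpha})}$ on $|\arg z|\le\mu<\pi$) is precisely the intended, omitted argument. No gaps.
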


The solution representation \eqref{eqn:sol_representation} can be succinctly rewritten as
\begin{equation}\label{eqn:subdiff-solrep}
  u(t) = \mathbb{E}(t)u_0 + \int_0^t \overline{\mathbb{E}}(t-s) 
\revision{
r(\cdot,s)
}\,ds,
\end{equation}
% \Margin{Ref.1, Minor 2}
where the solution operators $\mathbb{E}$ and $\overline{\mathbb{E}}$
are defined by, respectively
\begin{equation}\label{eqn:solop-E}
  \mathbb{E}(t)v = \sum_{j=1}^\infty E_{\alpha}(-\lambda_jt^\alpha)(v,\varphi_j)\varphi_j
\end{equation}
and
\begin{equation}\label{eqn:solop-Ebar}
  \overline{\mathbb{E}}(t)v = \sum_{j=1}^\infty t^{\alpha-1}E_{\alpha,\alpha}(-\lambda_jt^\alpha)(v,\varphi_j)\varphi_j.
\end{equation}
Since $\frac{d}{dt} E_{\alpha,1}(-\lambda_jt^\alpha)=-\lambda_j t^{\alpha-1}E_{\alpha,\alpha}(-\lambda_jt^\alpha)$ we have 
\begin{equation}\label{eqn:EEbar}
\frac{d}{dt} \mathbb{E}(t) v = \bar{\mathbb{E}}(t)\mathbb{L}v\,.
\end{equation}

\subsection{Existence, Uniqueness and Regularity}\label{subsect:exist_uniq_reg}

The differences between the parabolic ($\alpha=1$) and fractional ($\alpha<1$)
cases is substantial in terms of the regularity of the direct solution
even in the homogeneous case $r=0$.
For the parabolic problem, if $u_0\in L^2(\Omega)$ then $u(t)$ is infinitely
differentiable in $t$ and so we have an infinite pick up in regularity of
the solution.
For the fractional problem, this is not the case; one can only recover at
most two weak derivatives, see \cite{SakamotoYamamoto:2011a}.
This difference is due to the exponential decay of $e^{-\lambda_jt}$,
in the parabolic case but the only linear decay of
$E_{\alpha,1}(-\lambda_jt^\alpha)$ for $0<\alpha<1$.

There is also a distinction for the case of a linear nonhomogeneous term
as shown in \cite{SakamotoYamamoto:2011a} and Lemma \oldref{lem:subdiff-Ebar-space} below
which we will use to obtain a corresponding result for the nonlinear situation 
\eqref{eqn:direct_prob_fractional}.
As in the parabolic case, the main tool for this is Gronwall's inequality
and we will need a version of this for the fractional operator case.
\bigskip

Now we introduce the concept of weak solution for problem \eqref{eqn:subdiff}.

\begin{definition}\label{def:subdiff-sol}
We call $u$ a solution to problem \eqref{eqn:subdiff} if \eqref{eqn:subdiff} holds in $L^2(\Omega)$
and $u(\cdot,t)\in H_0^1(\Omega)$ for almost all $t\in(0,T)$ and $u\in C([0,T];
\dH {-\qq})$, with
\begin{equation*}
  \lim_{t\to 0}\|u(\cdot,t)-u_0\|_{\dH {-\qq}}
\revision{
=0
}
\end{equation*}
% \Margin{Ref.1, Minor 3}
with some $\qq\geq0$, which may depend on $\alpha$.
\end{definition}

In the homogeneous case, existence and uniqueness of a solution to \eqref{eqn:subdiff} can be concluded
from \eqref{eqn:subdiff-solrep} and the following stability estimate on the solution operator $\mathbb{E}(t)$
defined in \eqref{eqn:solop-E}.
\begin{lemma}\label{lem:subdiff-E-space}
For the operator $\mathbb{E}(t)$ defined in \eqref{eqn:solop-E}, we have,
for any $t>0$,
\begin{equation*}
  \|\mathbb{E}(t)v\|_{\dH p}\leq c t^{\frac{q-p}{2}\alpha}\|v\|_{\dH q},
\end{equation*}
where $0\leq q\leq 2$ and $q\leq p\leq q+2$, and the constant $c$
depends only on $\alpha$ and $p-q$.
\end{lemma}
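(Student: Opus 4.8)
The plan is to work entirely on the spectral side, using the expansion $\mathbb{E}(t)v=\sum_j E_{\alpha,1}(-\lambda_j t^\alpha)(v,\varphi_j)\varphi_j$ together with the definition of the $\dH{s}$ norms. By Parseval, with $v_j:=(v,\varphi_j)$,
\begin{equation*}
\|\mathbb{E}(t)v\|_{\dH p}^2=\sum_{j=1}^\infty \lambda_j^{\,p}\,\bigl|E_{\alpha,1}(-\lambda_j t^\alpha)\bigr|^2\,|v_j|^2
=\sum_{j=1}^\infty \lambda_j^{\,q}\,|v_j|^2\cdot\Bigl(\lambda_j^{\,p-q}\bigl|E_{\alpha,1}(-\lambda_j t^\alpha)\bigr|^2\Bigr).
\end{equation*}
So it suffices to bound the bracketed multiplier by $c\,t^{(q-p)\alpha}$ uniformly in $j$, i.e. to show
\begin{equation*}
\lambda_j^{\,p-q}\,\bigl|E_{\alpha,1}(-\lambda_j t^\alpha)\bigr|^2\leq c\, t^{-(p-q)\alpha}
\quad\Longleftrightarrow\quad
(\lambda_j t^\alpha)^{(p-q)/2}\,\bigl|E_{\alpha,1}(-\lambda_j t^\alpha)\bigr|\leq c.
\end{equation*}
Writing $x:=\lambda_j t^\alpha>0$ and $\theta:=(p-q)/2\in[0,1]$, the whole lemma reduces to the single scalar estimate
\begin{equation*}
\sup_{x>0}\, x^{\theta}\,\bigl|E_{\alpha,1}(-x)\bigr|\leq c(\alpha,\theta)\qquad\text{for }0\le\theta\le 1.
\end{equation*}

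To prove this scalar bound I would invoke Corollary \oldref{cor:mlfbdd} with $\beta=1$: since $\arg(-x)=\pi$, we are in the second regime and get $|E_{\alpha,1}(-x)|\le c/(1+x)$ for all $x>0$. Hence $x^\theta|E_{\alpha,1}(-x)|\le c\,x^\theta/(1+x)$, and the function $x\mapsto x^\theta/(1+x)$ is bounded on $(0,\infty)$ precisely because $0\le\theta\le 1$ (it tends to $0$ at the origin, is continuous, and behaves like $x^{\theta-1}\to 0$ or stays bounded at infinity; at $\theta=1$ it is bounded by $1$). This is exactly where the constraints $q\le p\le q+2$ enter: they are what guarantees $\theta\in[0,1]$. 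The dependence of $c$ on $\alpha$ and on $p-q$ only (not on the individual values of $p,q$ or on $t,j$) is transparent from this argument, since the scalar bound depends only on $\alpha$ (through Corollary \oldref{cor:mlfbdd}) and on $\theta=(p-q)/2$. One small point to handle at the boundary: the hypothesis $0\le q\le 2$ is needed so that $\|v\|_{\dH q}$ is the correct norm on the data side and the series manipulations are legitimate; no further use of it is required for the estimate itself.

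Assembling: plugging the scalar bound back in,
\begin{equation*}
\|\mathbb{E}(t)v\|_{\dH p}^2\leq \sum_{j=1}^\infty \lambda_j^{\,q}\,|v_j|^2\cdot c^2 t^{-(p-q)\alpha}
= c^2\, t^{(q-p)\alpha}\,\|v\|_{\dH q}^2,
\end{equation*}
and taking square roots gives the claim with $\tfrac{q-p}{2}\alpha$ in the exponent as stated. The only genuine obstacle is the uniform scalar estimate on $x^\theta|E_{\alpha,1}(-x)|$, and that is already packaged for us in Corollary \oldref{cor:mlfbdd}; everything else is Parseval plus elementary calculus on $x^\theta/(1+x)$. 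If one wanted to avoid citing the corollary, the same bound follows directly from Lemma \oldref{thm:mlf-asymptotic} with $N=1$, which yields $E_{\alpha,1}(-x)=\tfrac{1}{\Gamma(1-\alpha)}x^{-1}+O(x^{-2})$ for large $x$ (so $x^\theta E_{\alpha,1}(-x)\to 0$ when $\theta<1$ and is bounded when $\theta=1$), combined with continuity and $E_{\alpha,1}(0)=1$ to control the compact range of small $x$.
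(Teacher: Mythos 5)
Your proposal is correct and follows essentially the same route as the paper: Parseval on the eigenfunction expansion, the bound $|E_{\alpha,1}(-x)|\le c/(1+x)$ from Corollary \oldref{cor:mlfbdd}, and a uniform scalar bound on $(\lambda_j t^\alpha)^{(p-q)/2}/(1+\lambda_j t^\alpha)$. The only cosmetic difference is that the paper establishes the boundedness of this quotient via Young's inequality, $\lambda_j^{p-q}t^{(p-q)\alpha}\le \tfrac{p-q}{2}(\lambda_j t^\alpha)^2+1-\tfrac{p-q}{2}\le(1+\lambda_j t^\alpha)^2$, whereas you argue it by elementary calculus on $x^\theta/(1+x)$ for $\theta\in[0,1]$; both are equivalent.
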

\begin{proof}
By the definition of the operator $\mathbb{E}(t)$, there holds
\begin{equation*}
  \begin{aligned}
    \|\mathbb{E}(t)v\|_{\dH p}^2 & = \sum_{j=1}^\infty \lambda_j^p|E_{\alpha}(-\lambda_jt^\alpha)|^2(v,\varphi_j)^2\\
      & = t^{(q-p)\alpha}\sum_{j=1}^\infty \lambda_j^{p-q}t^{(p-q)\alpha}|E_{\alpha}(-\lambda_jt^\alpha)|^2\lambda_j^q(v,\varphi_j)^2
  \end{aligned}
\end{equation*}
By Corollary \oldref{cor:mlfbdd}, we have
\begin{equation*}
  |E_{\alpha}(-\lambda_jt^\alpha)|\leq \frac{c}{1+\lambda_jt^\alpha},
\end{equation*}
where the constant $c$ depends only on $\alpha$.
Consequently
\begin{equation*}
  \|\mathbb{E}(t)v\|_{\dH p}^2 \leq ct^{(q-p)\alpha}\sup_j\frac{\lambda_j^{p-q}t^{(p-q)\alpha}}{(1+\lambda_jt^\alpha)^2}\sum_{j=1}^\infty \lambda_j^q(v,\varphi_j)^2
  \leq ct^{(q-p)\alpha}\|v\|_{\dH q}^2,
\end{equation*}
where the last inequality follows from the choice of $p$ and $q$ such that
$0\leq p-q\leq 2$ and Young's inequality $ab\leq\frac{1}{P}a^P +\frac{P-1}{P}b^{\frac{P}{P-1}}$,
which yields
\begin{equation*}
  \lambda_j^{p-q}t^{(p-q)\alpha} \leq \tfrac{p-q}{2} (\lambda_jt^\alpha)^2 +1-\tfrac{p-q}{2} \leq (1+\lambda_jt^\alpha)^2\,.
\end{equation*}
\end{proof}

\begin{remark}
The choice $p\leq q+2$ in Lemma~\oldref{lem:subdiff-E-space}
indicates that the operator $\mathbb{E}(t)$ has at best a
smoothing property of order two in space,
which contrasts sharply with the classical parabolic case,
for which the following estimate holds \cite{Thomee:2006}
\begin{equation*}
  \|\mathbb{E}(t)v\|_{\dH p}\leq ct^{(q-p)/2}\|v\|_{\dH q},
\end{equation*}
for $q\geq 0$, and any $p\geq q$.
This is reflected in the exponential decay of $e^{-\lambda_jt}$,
instead of the linear decay $E_{\alpha,1}(-\lambda_jt^\alpha)$ for $0<\alpha<1$.
\end{remark}

Using Young's inequality for convolution
\begin{equation}\label{eqn:Young_convolution}
  \|g\ast h\|_{L^p(0,T)}\leq \|g\|_{L^q(0,T)}\|h\|_{L^r(0,T)}\,,
\end{equation}
for $g\in L^q(0,T)$, $h\in L^r(0,T)$, $p^{-1}+1=q^{-1} + r^{-1}$, $p,q,r\geq1$, 
one can similarly prove the following result on the the solution operator $\bar{\mathbb{E}}(t)$.
\begin{lemma}\label{lem:subdiff-Ebar-space}
For the operator $\bar{\mathbb{E}}(t)$ defined in \eqref{eqn:solop-Ebar}, there holds, for any $t>0$,
\begin{equation*}
  \|\bar{\mathbb{E}}(t)v\|_{\dH p}\leq c t^{-1+(1+\frac{q-p}{2})\alpha}\|v\|_{\dH q},
\end{equation*}
where $0\leq q\leq 2$ and $q\leq p\leq q+2$, and the constant $c$
depends only on $\alpha$ and $p-q$.
\end{lemma}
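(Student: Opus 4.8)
The plan is to mirror, essentially line for line, the proof of Lemma~\oldref{lem:subdiff-E-space}, the only differences being that $E_{\alpha,1}$ is replaced by $E_{\alpha,\alpha}$ and that the extra time factor $t^{\alpha-1}$ appearing in the definition \eqref{eqn:solop-Ebar} must be carried along. First I would expand the $\dH p$-norm via the spectral decomposition,
\begin{equation*}
\|\bar{\mathbb{E}}(t)v\|_{\dH p}^2 = t^{2(\alpha-1)}\sum_{j=1}^\infty \lambda_j^p\, |E_{\alpha,\alpha}(-\lambda_j t^\alpha)|^2\, (v,\varphi_j)^2 ,
\end{equation*}
and then split $\lambda_j^p=\lambda_j^{p-q}\lambda_j^q$ and rewrite $\lambda_j^{p-q}=(\lambda_j t^\alpha)^{p-q}\,t^{-(p-q)\alpha}$, so that
\begin{equation*}
\|\bar{\mathbb{E}}(t)v\|_{\dH p}^2 = t^{2(\alpha-1)+(q-p)\alpha}\sum_{j=1}^\infty (\lambda_j t^\alpha)^{p-q}\,|E_{\alpha,\alpha}(-\lambda_j t^\alpha)|^2\,\lambda_j^q (v,\varphi_j)^2 .
\end{equation*}

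Next I would apply Corollary~\oldref{cor:mlfbdd} with $\beta=\alpha$. Because the argument $z=-\lambda_j t^\alpha$ is real and negative, $|\arg z|=\pi$ lies in the range $\mu\le|\arg z|\le\pi$, so only the decay bound $|E_{\alpha,\alpha}(-\lambda_j t^\alpha)|\le c/(1+\lambda_j t^\alpha)$ is in force (the exponentially growing term of the corollary never contributes, which is exactly what limits the spatial smoothing to order two). Inserting this and estimating the series by a supremum gives
\begin{equation*}
\|\bar{\mathbb{E}}(t)v\|_{\dH p}^2 \le c\, t^{2(\alpha-1)+(q-p)\alpha}\,\Bigl(\sup_{s\ge 0}\frac{s^{p-q}}{(1+s)^2}\Bigr)\,\|v\|_{\dH q}^2 ,
\end{equation*}
and this supremum is finite, with a value depending only on $p-q$, because, exactly as in Lemma~\oldref{lem:subdiff-E-space}, Young's inequality yields $s^{p-q}\le(1+s)^2$ for all $s\ge 0$ whenever $0\le p-q\le 2$. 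Taking square roots and simplifying the exponent, $\tfrac12\bigl(2(\alpha-1)+(q-p)\alpha\bigr)=-1+(1+\tfrac{q-p}{2})\alpha$, produces the claimed estimate with $c=c(\alpha,p-q)$.

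I do not expect a genuine obstacle; the content is a routine variant of the preceding lemma. The two points that warrant a little care are: (i) the exponent bookkeeping, in particular that the prefactor $t^{\alpha-1}$ is what produces the leading $-1$ and hence the singularity at $t=0$, which is integrable precisely when $p<q+2$, the endpoint $p=q+2$ still being admissible for each fixed $t>0$; and (ii) the hypotheses $0\le q\le 2$ and $q\le p\le q+2$ are exactly those that make $\|v\|_{\dH q}$ meaningful and the Young step $s^{p-q}\le(1+s)^2$ valid. Finally, I would remark that the convolution form of Young's inequality \eqref{eqn:Young_convolution} quoted just before the statement is not actually needed for this pointwise-in-$t$ bound; it enters only when $\bar{\mathbb{E}}(t-s)$ is integrated against a source in the Duhamel representation \eqref{eqn:subdiff-solrep}, which is the form in which this lemma will be used in Section~\oldref{subsect:exist_uniq_reg}.
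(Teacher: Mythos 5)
Your proof is correct and is exactly the argument the paper intends: the lemma is stated without proof as being provable "similarly" to Lemma~\oldref{lem:subdiff-E-space}, and your spectral expansion, the decay bound from Corollary~\oldref{cor:mlfbdd}, and the Young-inequality bound on $s^{p-q}/(1+s)^2$ reproduce that argument with the extra $t^{\alpha-1}$ factor correctly accounted for in the exponent. Your side remark that the convolution form of Young's inequality is not needed for this pointwise-in-$t$ estimate, but only later when $\bar{\mathbb{E}}$ appears under the Duhamel integral, is also accurate.
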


\medskip

Gronwall's inequalities are a basic tool for differential equations;
here is the classical integral formulation that does not require interior
differentiability of the function $u(t)$.
\begin{theorem}\label{thm:Gronwall_int}
Let $\beta$ and $u$ be real-valued continuous functions defined on the interval
$[a,b]$ while $\alpha$ is integrable on every subinterval of $(a,b)$.
If $\beta(t)\geq 0$ and if 
\[
u(t)\leq \alpha(t)+\int_{a}^{t}\beta(s)u(s)\,ds\qquad \text{for all} \ t\in [a,b],
\] 
then
\begin{equation}\label{eqn:Gronwall2}
u(t)\leq \alpha(t)+\int_{a}^{t}\alpha (s)\beta (s) \,e^{\int_{s}^{t}\beta(r)\,dr}ds,
\qquad \text{for all} \ t \in [a,b].
\end{equation}
If, in addition, the function $\alpha$ is non-decreasing, then
\begin{equation}\label{eqn:Gronwall3}
u(t)\leq \alpha (t)\,e^{\int_{a}^{t}\beta(s)\,ds},
\qquad \text{for all} \ t \in [a,b].
\end{equation}
\end{theorem}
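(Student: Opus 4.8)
The plan is to reduce the scalar integral inequality to a linear first-order differential inequality for the accumulated term $v(t):=\int_a^t\beta(s)u(s)\,ds$ and solve it by an integrating factor, mimicking the classical ODE argument but carried out at the level of (absolutely) continuous functions so that no interior differentiability of $u$ itself is required.

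\textbf{Step 1 (set-up).} Put $v(t):=\int_a^t\beta(s)u(s)\,ds$. Since $\beta$ and $u$ are continuous on $[a,b]$, the product $\beta u$ is continuous, so $v\in C^1([a,b])$ with $v(a)=0$ and $v'(t)=\beta(t)u(t)$. Feeding the hypothesis $u(t)\le\alpha(t)+v(t)$ into this identity and using $\beta\ge0$ gives
\[
v'(t)-\beta(t)v(t)\le\beta(t)\alpha(t)\qquad\text{for almost every }t\in[a,b],
\]
the product $\beta\alpha$ being integrable on subintervals of $(a,b)$ because $\beta$ is bounded and $\alpha$ is integrable there.

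\textbf{Step 2 (integrating factor and the monotone case).} Multiply by $\mu(t):=e^{-\int_a^t\beta(r)\,dr}>0$, so that the inequality becomes $\frac{d}{dt}\bigl(\mu(t)v(t)\bigr)\le\beta(t)\alpha(t)\mu(t)$. Integrating from $a$ to $t$, using $\mu(a)=1$, $v(a)=0$, dividing by $\mu(t)$, and noting $\mu(s)/\mu(t)=e^{\int_s^t\beta(r)\,dr}$, we obtain
\[
v(t)\le\int_a^t\alpha(s)\beta(s)\,e^{\int_s^t\beta(r)\,dr}\,ds,
\]
and substituting back into $u(t)\le\alpha(t)+v(t)$ yields \eqref{eqn:Gronwall2}. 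If in addition $\alpha$ is non-decreasing, then $\alpha(s)\le\alpha(t)$ for $s\le t$ lets $\alpha(t)$ come out of the integral in \eqref{eqn:Gronwall2}, while the remaining kernel collapses through the exact primitive $\frac{d}{ds}\bigl(-e^{\int_s^t\beta(r)\,dr}\bigr)=\beta(s)e^{\int_s^t\beta(r)\,dr}$, which integrates to $e^{\int_a^t\beta(r)\,dr}-1$; hence $u(t)\le\alpha(t)\bigl(1+(e^{\int_a^t\beta(r)\,dr}-1)\bigr)=\alpha(t)e^{\int_a^t\beta(r)\,dr}$, which is \eqref{eqn:Gronwall3}.

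There is no deep obstacle: the statement is classical. The only points that need care are the regularity bookkeeping — that $v\in C^1$, so the integrating-factor step is legitimate without differentiating $u$; that every product appearing is integrable given $\beta$ bounded and $\alpha$ merely integrable; and that the hypothesis ``for all $t$'' is read in the almost-everywhere sense consistent with $\alpha$ being defined only a.e., the resulting bounds \eqref{eqn:Gronwall2}--\eqref{eqn:Gronwall3} then being interpreted in the same sense (or everywhere, if one additionally takes $\alpha$ continuous). An equivalent route, which I would only mention in passing, is to iterate the inequality and sum the resulting series $\sum_n\frac{1}{n!}\bigl(\int\beta\bigr)^n$, but the integrating-factor proof is shorter and yields the sharper form \eqref{eqn:Gronwall2} directly.
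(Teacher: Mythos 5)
Your proof is correct. Note, however, that the paper does not actually prove Theorem \ref{thm:Gronwall_int}: it is quoted as the classical integral Gronwall inequality and used as a black box, so there is no in-paper argument to match yours against. The relevant comparison is with the proofs the paper \emph{does} give for its Gronwall-type results, namely Theorem \ref{thm:frac_gronwall} and Lemma \ref{lem:Gronwall_conv}, both of which use the iteration/Neumann-series device you mention only in passing: define $B\phi(t)=\int_a^t k(t,s)\phi(s)\,ds$, iterate $u\leq\sum_{k\le n}B^k\alpha+B^nu$, show $B^nu\to0$, and sum the series. Your integrating-factor route is shorter and, as you say, delivers \eqref{eqn:Gronwall2} in one stroke, but it hinges on the kernel being $\beta(s)\cdot\mathbf{1}_{\{s\le t\}}$ so that $v=\int_a^\cdot\beta u$ is $C^1$ and $e^{-\int_a^t\beta}$ is an exact integrating factor; it does not survive the passage to the weakly singular kernel $(t-s)^{\beta-1}$ of \eqref{eqn:frac_gronwall_cond}, which is precisely why the paper's fractional and convolution versions are proved by iteration instead. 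Two minor bookkeeping points in your write-up: the differential inequality $v'-\beta v\le\beta\alpha$ in fact holds for \emph{every} $t$ (not merely a.e.), since $v'=\beta u$ everywhere by continuity of $\beta u$ and the hypothesis is assumed pointwise; and for \eqref{eqn:Gronwall2} to be meaningful one needs $\alpha\beta$ integrable up to the left endpoint $a$, which you implicitly use when integrating $(\mu v)'\le\mu\beta\alpha$ from $a$ — this is really a (harmless) gap in the theorem's stated hypothesis rather than in your argument. The passage from \eqref{eqn:Gronwall2} to \eqref{eqn:Gronwall3} via $\frac{d}{ds}\bigl(-e^{\int_s^t\beta}\bigr)=\beta(s)e^{\int_s^t\beta}$ is exactly right and requires no sign condition on $\alpha$.
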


We will need the corresponding results for fractional order equations

\begin{theorem}\label{thm:frac_gronwall}
Let $\beta>0$. Suppose that $a(t)$ is a nonnegative,
locally integrable function on $(0,T)$ for some $T>0$,
while $g(t)$ is a nonnegative, nondecreasing continuous function on $(0,T)$.
If $u(t)$ is nonnegative and locally integrable on $(0,T)$ such that
\begin{equation}\label{eqn:frac_gronwall_cond}
u(t) \leq  a(t) + g(t)\int_0^t (t-\tau)^{\beta-1} u(\tau)\,d\tau,\qquad 0<t<T
\end{equation}
Then
\begin{equation}\label{eqn:frac_gronwall1}
u(t)\leq a(t) + \int_0^t \sum_{n=1}^\infty
\frac{(g(t)\Gamma(\beta))^n}{\Gamma(n\beta)}
(t-\tau)^{n\beta-1} a(\tau)\,d\tau
\end{equation}
In addition, if $a(t)$ is nondecreasing on $[0,T)$ then
\begin{equation}\label{eqn:frac_gronwall2}
u(t) \leq a(t)E_{\beta,1}(\Gamma(\beta)t^\beta g(t)),\qquad 0<t<T.
\end{equation}
\end{theorem}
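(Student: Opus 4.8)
The plan is to run the standard Picard-type iteration of the integral inequality. Introduce the positive linear operator $B$ acting on nonnegative locally integrable functions by $(B\phi)(t):=g(t)\int_0^t (t-\tau)^{\beta-1}\phi(\tau)\,d\tau$, so that the hypothesis \eqref{eqn:frac_gronwall_cond} reads $u\le a+Bu$ on $(0,T)$. Because $B$ maps nonnegative functions to nonnegative functions, substituting this bound into itself $n$ times gives $u(t)\le\sum_{k=0}^{n-1}(B^k a)(t)+(B^n u)(t)$ for every $n\in\mathbb{N}$. The proof then reduces to (i) an explicit upper bound for $B^k$ applied to a nonnegative function, and (ii) showing the remainder $(B^n u)(t)$ vanishes as $n\to\infty$ for each fixed $t\in(0,T)$.

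For (i) I would prove by induction on $k$ that, for every nonnegative locally integrable $\phi$,
\[
(B^k\phi)(t)\;\le\;\int_0^t \frac{(g(t)\Gamma(\beta))^k}{\Gamma(k\beta)}\,(t-\tau)^{k\beta-1}\phi(\tau)\,d\tau .
\]
The case $k=1$ is an identity since $\Gamma(1\cdot\beta)=\Gamma(\beta)$. For the step, apply $B$ to the level-$k$ bound, use monotonicity of $g$ to replace $g(s)$ by $g(t)$ for $s\le t$, interchange the $s$- and $\tau$-integrations by Tonelli's theorem (the integrand is nonnegative), and evaluate the inner integral $\int_\tau^t (t-s)^{\beta-1}(s-\tau)^{k\beta-1}\,ds$ by the substitution $s=\tau+(t-\tau)\sigma$; it equals $\frac{\Gamma(\beta)\Gamma(k\beta)}{\Gamma((k+1)\beta)}(t-\tau)^{(k+1)\beta-1}$ (a Beta integral), and the $\Gamma$-factors combine with the prefactor to produce exactly the level-$(k+1)$ bound.

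For (ii) I would apply the same bound with $\phi=u$ and split off the singularity via $(t-\tau)^{n\beta-1}=(t-\tau)^{(n-1)\beta}(t-\tau)^{\beta-1}\le t^{(n-1)\beta}(t-\tau)^{\beta-1}$, obtaining
\[
(B^n u)(t)\;\le\; t^{-\beta}\Big(\int_0^t (t-\tau)^{\beta-1}u(\tau)\,d\tau\Big)\,\frac{(g(t)\Gamma(\beta)t^\beta)^n}{\Gamma(n\beta)} .
\]
The bracketed integral is finite (it is the convolution term already appearing in \eqref{eqn:frac_gronwall_cond}), and $(g(t)\Gamma(\beta)t^\beta)^n/\Gamma(n\beta)\to 0$ because $\Gamma(n\beta)$ outgrows any geometric sequence (Stirling). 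Hence $(B^n u)(t)\to 0$; letting $n\to\infty$ in the iterated inequality and using the monotone convergence theorem to pull the nonnegative series inside the integral yields \eqref{eqn:frac_gronwall1}.

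Finally, for \eqref{eqn:frac_gronwall2}: if $a$ is nondecreasing then $a(\tau)\le a(t)$ for $\tau\le t$, so replacing $a(\tau)$ by $a(t)$ in \eqref{eqn:frac_gronwall1}, using $\int_0^t(t-\tau)^{n\beta-1}\,d\tau=t^{n\beta}/(n\beta)$ and $n\beta\,\Gamma(n\beta)=\Gamma(n\beta+1)$, collapses the series to $a(t)\sum_{n\ge1}(\Gamma(\beta)t^\beta g(t))^n/\Gamma(n\beta+1)=a(t)\bigl(E_{\beta,1}(\Gamma(\beta)t^\beta g(t))-1\bigr)$ by the series definition \eqref{eqn:mlf}; adding back the leading term $a(t)$ gives the stated bound. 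I expect the only delicate points to be the Tonelli interchange inside the induction and the finiteness and decay of the remainder $(B^n u)(t)$, both of which are handled cleanly once the singular factor $(t-\tau)^{\beta-1}$ is isolated as above, so I do not anticipate a serious obstacle.
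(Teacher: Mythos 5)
Your proof is correct and follows essentially the same route as the paper's: the same iterated operator $B$, the same induction with the Beta--Gamma identity for $\int_\tau^t(t-s)^{\beta-1}(s-\tau)^{k\beta-1}\,ds$, and the same summation of the series into $E_{\beta,1}$ in the nondecreasing case. The only (welcome) difference is that you make explicit why $(B^n u)(t)\to 0$ by isolating the singular factor $(t-\tau)^{\beta-1}$, a step the paper dismisses as ``easily seen.''
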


\begin{proof}
For locally integrable functions $\phi(t)$ on $[0,T)$
define the operator $B$ by
$B\phi(t) = g(t)\int_0^t (t-\tau)^{\beta-1} \phi(\tau)\,d\tau$.
Then for each integer $n$ by successively applying
\eqref{eqn:frac_gronwall_cond} we obtain
\begin{equation}\label{eqn:frac_gronwall_proof1}
u(t) \leq \sum_{k=0}^n B^k a(t) + B^n u(t)
\end{equation}
We claim that
\begin{equation}\label{eqn:frac_gronwall_proof2}
B^n u(t) \leq [g(t)]^n\int_0^t \frac{\Gamma(\beta)^n}{\Gamma(\beta n)}
(t-\tau)^{n\beta-1}u(\tau)\,d\tau.
\end{equation}
The proof of this is by induction and
the inequality \eqref{eqn:frac_gronwall_proof2} is clearly true for $n=1$.
Assume now it is true for $n=j$, then we have
\begin{equation}\label{eqn:frac_gronwall_proof3}
\begin{aligned}
B^{j+1}u(t) &= B(B^j u(t))\\
& \leq g(t)\int_0^t (t-\tau)^{\beta-1}\Bigl[
	\int_0^\tau \frac{(g(\tau)\Gamma(\beta))^j}{\Gamma(j\beta)}
(\tau-s)^{j\beta-1}u(s)\,ds\Bigr]\,d\tau \\
& \leq (g(t))^{j+1}\int_0^t (t-\tau)^{\beta-1}\Bigl[
\int_0^\tau \frac{\Gamma(\beta)^j}{\Gamma(j\beta)}
(\tau-s)^{j\beta-1}u(s)\,ds\Bigr]\,d\tau\\
& \leq (g(t))^{j+1}\int_0^t\Bigl[
\int_s^t (t-\tau)^{\beta-1} (\tau-s)^{j\beta-1}\,d\tau\Bigr]u(s)\,ds \frac{\Gamma(\beta)^j}{\Gamma(j\beta)}\\
& = (g(t))^{j+1}\int_0^t\frac{\Gamma(\beta)^{j+1}}{\Gamma((j+1)\beta)}
(t-\tau)^{(j+1)\beta-1} u(\tau)\,d\tau \\
\end{aligned}
\end{equation}
where the second line follows from the first from the fact that $g(t)$
is nondecreasing and the third line follows by an obvious change of variables.
Writing the integrand as a Beta function and then invoking the Beta-Gamma
function duplication formula, yields the identity
$\int_s^t (t-\tau)^{\beta-1}(\tau-s)^{j\beta-1}\, d\tau = \frac{\Gamma(j\beta)\Gamma(\beta)}{\Gamma((j+1)\beta)} (t-s)^{(j+1)\beta-1}$.
% (2.5) in fde${}_$lect.pdf with $\gamma=j\beta-1$, $\alpha=\beta$, $x=t$, $a=s$.
This now shows \eqref{eqn:frac_gronwall_proof2}.

With $M=\sup_{[0,T]}g(t)$  we obtain
\begin{equation}\label{eqn:frac_gronwall_proof4}
B^n u(t) \leq \int_0^t \frac{(M\Gamma(\beta))^n}{\Gamma(n\beta)}
(t-\tau)^{n\beta-1}u(\tau)\,d\tau
\end{equation}
and it is easily seen that 
\begin{equation}\label{eqn:frac_gronwall_proof5}
B^n u(t) \to 0 \quad\mathrm{as}\ n\to\infty.
\end{equation}
From the above we now obtain \eqref{eqn:frac_gronwall1}.

If $a(t)$ is nondecreasing on $[0,T)$ then we can write
\eqref{eqn:frac_gronwall1} as
\begin{equation}\label{eqn:frac_gronwall_proof6}
\begin{aligned}
u(t) &\leq a(t)\Bigl[1  + \int_0^t \sum_{n=1}^\infty
\frac{(g(t)\Gamma(\beta))^n}{\Gamma(n\beta)}
(t-\tau)^{n\beta-1} a(\tau)\,d\tau\Bigr]\\
&\leq a(t)\sum_{n=0}^\infty \frac{(g(t)\Gamma(\beta) t^\beta)^n}{\Gamma(n\beta+1)}
= a(t)E_{\beta,1}\bigl(\Gamma(\beta)t^\beta g(t)\bigr) \,,
\end{aligned}
\end{equation}
where we have used the fact that 
$\int_0^t (t-\tau)^{n\beta-1}\, d\tau=\int_0^t s^{n\beta-1}=\frac{\Gamma(n\beta)}{\Gamma(n\beta+1)} t^{n\beta}$.
% (2.5) in fde${}_$lect.pdf with $\gamma=n\beta-1$, $\alpha=1$, $x=t$, $a=0$.
\end{proof}
An important particular case occurs when $g$ is constant and the expression on the right hand side of the estimate can be written as a Mittag-Leffler function.  
\begin{lemma}\label{lem:fracgronwall}
Suppose $b\geq0$, $\beta>0$ and $a(t)$ is a nonnegative function locally integrable on $0\leq t<T$
(for some $T<+\infty$), and suppose $u(t)$ is nonnegative and locally integrable on $0\leq t<T$ with
\begin{equation*}
  u(t)\leq a(t) + \frac{b}{\Gamma(\beta)} \int_0^t (t-s)^{\beta-1} u(s) \,ds\quad 0\leq t<T.
\end{equation*}
Then 
\begin{equation*}
  u(t)\leq a(t) + b\int_0^t (t-s)^{\beta-1}E_{\beta,\beta}(b(t-s)^\beta)a(s)\,ds, \quad 0\leq t<T.
\end{equation*}
\end{lemma}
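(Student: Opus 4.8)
The plan is to deduce this as the special case of Theorem~\ref{thm:frac_gronwall} in which the prefactor $g$ is the constant $g(t)\equiv b/\Gamma(\beta)$. First I would check that this choice of $g$ meets the hypotheses of that theorem: with $b\geq0$ it is nonnegative, and being constant it is trivially continuous and nondecreasing on $(0,T)$; the remaining assumptions on $a$, $u$ and $\beta$ are exactly those imposed here. Hence the integral inequality in the present statement is precisely \eqref{eqn:frac_gronwall_cond} with this $g$, and Theorem~\ref{thm:frac_gronwall} applies. Since $g(t)\Gamma(\beta)=b$, the conclusion \eqref{eqn:frac_gronwall1} reads
\begin{equation*}
  u(t)\leq a(t) + \int_0^t \sum_{n=1}^\infty \frac{b^n}{\Gamma(n\beta)}(t-\tau)^{n\beta-1}\,a(\tau)\,d\tau .
\end{equation*}

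The second step is to identify the kernel series as a Mittag-Leffler function. Reindexing by $n=k+1$ and comparing with the definition \eqref{eqn:mlf},
\begin{equation*}
  \sum_{n=1}^\infty \frac{b^n}{\Gamma(n\beta)}(t-\tau)^{n\beta-1}
  = b(t-\tau)^{\beta-1}\sum_{k=0}^\infty \frac{\bigl(b(t-\tau)^\beta\bigr)^k}{\Gamma(\beta k+\beta)}
  = b(t-\tau)^{\beta-1}E_{\beta,\beta}\bigl(b(t-\tau)^\beta\bigr),
\end{equation*}
which is exactly the kernel appearing in the asserted estimate; substituting this back into the displayed bound gives the claim.

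There is no real obstacle here; the only points requiring a word of care are the interchange of summation and integration — legitimate because $E_{\beta,\beta}$ is entire, so the series converges uniformly in $\tau$ on $[0,t]$ for each fixed $t<T$ — and the integrability of the kernel near $\tau=t$ when $\beta<1$, which holds since $(t-\tau)^{\beta-1}$ is integrable there while $E_{\beta,\beta}$ stays bounded on the relevant compact range of its argument. Alternatively, if one prefers a self-contained argument, the same iteration scheme used in the proof of Theorem~\ref{thm:frac_gronwall} — applying the operator $B\phi(t)=\frac{b}{\Gamma(\beta)}\int_0^t(t-s)^{\beta-1}\phi(s)\,ds$ repeatedly, summing, and using the Beta–Gamma identity $\int_s^t(t-\tau)^{\beta-1}(\tau-s)^{j\beta-1}\,d\tau=\frac{\Gamma(\beta)\Gamma(j\beta)}{\Gamma((j+1)\beta)}(t-s)^{(j+1)\beta-1}$ — reproduces the estimate directly, with the Mittag-Leffler kernel arising again from the reindexing above.
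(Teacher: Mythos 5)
Your proposal is correct and is exactly the argument the paper intends: the lemma is stated as the constant-$g$ special case of Theorem \oldref{thm:frac_gronwall}, and your reindexing of $\sum_{n\geq1} b^n(t-\tau)^{n\beta-1}/\Gamma(n\beta)$ into $b(t-\tau)^{\beta-1}E_{\beta,\beta}(b(t-\tau)^\beta)$ is the identification the authors leave implicit. Nothing further is needed.
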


Moreover, the following convolution version of Gronwall's inequality will be employed.
\begin{lemma}\label{lem:Gronwall_conv}
Suppose $c\geq0$ and $b(t)$ is a nonnegative function locally integrable on $0\leq t<T$
(for some $T<+\infty$), and suppose $u(t)$ is nonnegative and locally integrable on $0\leq t<T$ with
\begin{equation*}
u(t)\leq b(t)+c\int_0^t e^{-\lambda (t-s)} u(s)\, ds.
\end{equation*}
Then 
\begin{equation*}
  u(t)\leq b(t)+c\int_0^t e^{-(\lambda-c) (t-s)} b(s)\, ds.
\end{equation*}
\end{lemma}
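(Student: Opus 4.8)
The plan is to reduce this to the classical integral Gronwall inequality (Theorem~\oldref{thm:Gronwall_int}) by means of an integrating-factor substitution that absorbs the exponential weight. Set $v(t) = e^{\lambda t} u(t)$ and $\tilde b(t) = e^{\lambda t} b(t)$. Multiplying the hypothesis $u(t)\leq b(t)+c\int_0^t e^{-\lambda(t-s)}u(s)\,ds$ by $e^{\lambda t}$ gives, after moving the $e^{\lambda t}$ inside the integral and noting $e^{\lambda t}e^{-\lambda(t-s)} = e^{\lambda s}$,
\[
v(t) \leq \tilde b(t) + c\int_0^t v(s)\,ds, \qquad 0\leq t<T.
\]
This is exactly the form \eqref{eqn:Gronwall2} with $\alpha(t)=\tilde b(t)$ and $\beta(s)\equiv c$. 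Since $\tilde b$ is nonnegative and locally integrable and the constant function $c$ is integrable on subintervals, Theorem~\oldref{thm:Gronwall_int} applies (in its integral form, which does not require continuity, only local integrability of $\alpha$ — I would remark that the statement there is phrased for continuous data but the conclusion \eqref{eqn:Gronwall2} holds verbatim under local integrability, as the proof is just iteration of the inequality). Thus
\[
v(t) \leq \tilde b(t) + c\int_0^t \tilde b(s)\, e^{c(t-s)}\,ds.
\]

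The final step is to undo the substitution. Dividing by $e^{\lambda t}$,
\[
u(t) \leq b(t) + c\int_0^t e^{-\lambda t} e^{\lambda s} b(s)\, e^{c(t-s)}\,ds
      = b(t) + c\int_0^t e^{-(\lambda - c)(t-s)} b(s)\,ds,
\]
which is the asserted conclusion. This is essentially a three-line argument once the substitution is in place.

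The only mild subtlety — and the step I would be most careful about — is the measure-theoretic hypothesis: Theorem~\oldref{thm:Gronwall_int} is stated for \emph{continuous} $\beta,u$, whereas here $u$ and $b$ are merely locally integrable. I expect no real obstacle, since the iteration proof of \eqref{eqn:Gronwall2} (repeatedly substituting the bound into itself and using dominated convergence for the remainder term) goes through unchanged for locally integrable data, and the kernel $\beta\equiv c$ is bounded. I would therefore either invoke a locally-integrable version of Gronwall directly, or carry out the short iteration argument explicitly on $v(t)\leq \tilde b(t)+c\int_0^t v(s)\,ds$: one shows by induction that $v(t)\leq \sum_{k=0}^{n}\frac{c^k}{k!}\int_0^t (t-s)^k \tilde b(s)\,ds + \frac{c^{n+1}}{n!}\int_0^t(t-s)^n v(s)\,ds$, the remainder tends to $0$ as $n\to\infty$ on any $[0,t]\subset[0,T)$ by local integrability of $v$, and summing the series gives $\int_0^t e^{c(t-s)}\tilde b(s)\,ds$, hence the same conclusion.
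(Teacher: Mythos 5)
Your proof is correct and, at its core, is the same argument as the paper's: the paper iterates the operator $(Ba)(t)=c\int_0^t e^{-\lambda(t-s)}a(s)\,ds$ directly, computes $B^k$ by induction and sums the exponential series, which is exactly what your fallback iteration on $v(t)\le\tilde b(t)+c\int_0^t v(s)\,ds$ amounts to after your integrating-factor substitution $v=e^{\lambda t}u$. Your caution about the continuity hypothesis in Theorem~\oldref{thm:Gronwall_int} versus the merely locally integrable data here is well placed, and the remedy you propose (carrying out the iteration explicitly) is precisely the route the paper takes.
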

\begin{proof}
The proof proceeds as in that of Theorem \oldref{thm:frac_gronwall}
but in this case it is  easier since there is no singularity involved.
Define $(Ba)(t)=c\int_0^t e^{-\lambda (t-s)} a(s)\, ds$, then the inequality for $u$ reads $u(t)\leq b(t)+(Bu)(t)$ and implies
$u(t)\leq \sum_{k=0}^N(B^k b)(t)+(B^Nu)(t)$, where (by induction) $(B^ka)(t)=c^k\int_0^t\frac{(t-s)^{k-1}}{(k-1)!} e^{-\lambda (t-s)} a(s)\, ds\, \to\, 0$ as $k\to\infty$, hence  
\[
u(t)\leq \sum_{k=0}^\infty(B^k b)(t)=b(t)+c\int_0^t e^{-(\lambda-c) (t-s)} b(s)\, ds\,.
\]
\end{proof}

\subsection{Semilinear problems}\label{sec:sub_diff:semilinear}
Now we briefly discuss the slightly more complex case, i.e., semilinear problems, using a
fixed point argument.

Consider the following initial boundary value problem for a semilinear subdiffusion equation
\begin{equation}\label{eqn:subdiff-semilinear}
  \left\{\begin{aligned}
    D_t^\alpha u &= \mathbb{L} u + f(u) + r \quad\mbox{ in } \Omega\times (0,T),\\
    \partial_\nu u +\gamma u &= 0\quad \mbox{ on }\partial\Omega\times(0,T),\\
    u(\cdot,0) &= u_0\quad\mbox{ in }\Omega.
  \end{aligned}\right.
\end{equation}
In the model, we assume
\begin{equation}\label{eqn:ass-reg}
    \mbox{supess}_{\xi\in\mathbb{R}} \left|\frac{d^kf}{d\xi^k}\right|<\infty, k=0,1\quad \mbox{and}\quad
    r\in C^1([0,T];L^2(\Omega)).
\end{equation}
The argument below is based on the operator theoretic approach in $L^2(\Omega)$, see also \cite[Theorem 3.1]{JinLiZhou17}.

\begin{theorem}\label{th:semilin}
Let $u_0\in \dH 2$, $f(u_0)\in L^2(\Omega)$, and \eqref{eqn:ass-reg} hold.
Then the solution $u$ to problem \eqref{eqn:subdiff-semilinear} belongs to $C([0,T];\dH 2)\cap
C^1((0,T];L^2(\Omega))$ such that $D_t^\alpha u \in C((0,T];L^2(\Omega))$.
\end{theorem}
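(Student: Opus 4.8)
The plan is to recast \eqref{eqn:subdiff-semilinear} through the variation-of-constants representation associated with \eqref{eqn:subdiff-solrep},
\[
u(t) = \mathbb{E}(t)u_0 + \int_0^t\overline{\mathbb{E}}(t-s)\bigl(f(u(s))+r(s)\bigr)\,ds ,
\]
and to solve it by a Picard iteration whose iterates are controlled simultaneously in the two norms of the conclusion. Assumption \eqref{eqn:ass-reg} enters only through the facts that $f$ is bounded and globally Lipschitz with bounded derivative — so $v\mapsto f(v)$ is Lipschitz on $L^2(\Omega)$ and $\tfrac{d}{dt}f(u(t))=f'(u(t))u_t(t)$ holds in $L^2(\Omega)$ — and that $r\in C^1([0,T];L^2(\Omega))$. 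Since $\|\overline{\mathbb{E}}(t)\|_{L^2\to L^2}\le ct^{\alpha-1}$ (Lemma \oldref{lem:subdiff-Ebar-space} with $p=q=0$) and this kernel is integrable, some iterate of the map $u\mapsto\mathbb{E}(\cdot)u_0+\int_0^\cdot\overline{\mathbb{E}}(\cdot-s)(f(u(s))+r(s))\,ds$ is a contraction on $C([0,T];L^2(\Omega))$ — for every $T$, by the Mittag-Leffler summation in the proof of Lemma \oldref{lem:fracgronwall}, because $f$ is globally Lipschitz — yielding a unique mild solution $u\in C([0,T];L^2(\Omega))$.

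The substance of the theorem is the gain of regularity, and the difficulty is that $\overline{\mathbb{E}}(t)$ smooths by strictly less than two orders in any integrable way — Lemma \oldref{lem:subdiff-Ebar-space} with $p=2$, $q=0$ gives the \emph{non-integrable} rate $\|\overline{\mathbb{E}}(t-s)v\|_{\dH 2}\le c(t-s)^{-1}\|v\|_{L^2}$ — so one cannot reach $\dH 2$ by estimating the Duhamel integrand pointwise in $s$ against an $L^2$-valued forcing. The device that repairs this is the identity
\[
A\int_0^\tau\overline{\mathbb{E}}(\sigma)v\,d\sigma = v-\mathbb{E}(\tau)v ,
\]
immediate from \eqref{eqn:EEbar}. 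Consider the \emph{linear} problem with data $u_0\in\dH 2$ and a forcing $F\in C([0,T];L^2(\Omega))\cap C^1((0,T];L^2(\Omega))$ with $\int_0^T\|F'(s)\|_{L^2}\,ds<\infty$, and let $w(t)=\mathbb{E}(t)u_0+\int_0^t\overline{\mathbb{E}}(t-s)F(s)\,ds$. An integration by parts in $s$ turns $A\int_0^t\overline{\mathbb{E}}(t-s)F(s)\,ds$ into $(I-\mathbb{E}(t))F(0)+\int_0^t(I-\mathbb{E}(t-s))F'(s)\,ds$, and differentiating (using $\mathbb{E}(0)=I$ and that $\tau\mapsto A^{-1}(I-\mathbb{E}(\tau))$ vanishes at $\tau=0$) gives
\[
w_t(t) = \overline{\mathbb{E}}(t)\mathbb{L}u_0 + \overline{\mathbb{E}}(t)F(0) + \int_0^t\overline{\mathbb{E}}(t-s)F'(s)\,ds .
\]
With $\|\mathbb{E}(\tau)\|_{L^2\to L^2}\le c$ (Corollary \oldref{cor:mlfbdd}), $\|\overline{\mathbb{E}}(\tau)\|_{L^2\to L^2}\le c\tau^{\alpha-1}$, and $\|\mathbb{E}(t)u_0\|_{\dH 2}\le c\|u_0\|_{\dH 2}$ (Lemma \oldref{lem:subdiff-E-space}, $p=q=2$), it follows that $w\in C([0,T];\dH 2)\cap C^1((0,T];L^2(\Omega))$ with $\|w(t)\|_{\dH 2}\le c(\|u_0\|_{\dH 2}+\|F(0)\|_{L^2}+\int_0^t\|F'(s)\|_{L^2}\,ds)$ and $\|w_t(t)\|_{L^2}\le ct^{\alpha-1}(\|u_0\|_{\dH 2}+\|F(0)\|_{L^2})+c\int_0^t(t-s)^{\alpha-1}\|F'(s)\|_{L^2}\,ds$.

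Now run the Picard iteration $u^{(n+1)}=w$ with forcing $F^{(n)}:=f(u^{(n)})+r$, starting from $u^{(0)}\equiv u_0$; every iterate has $u^{(n)}(0)=u_0$, so $F^{(n)}(0)=f(u_0)+r(0)\in L^2(\Omega)$ by the hypotheses $f(u_0)\in L^2(\Omega)$ and $r\in C^1([0,T];L^2)$. By the chain rule $\|(F^{(n)})'(s)\|_{L^2}\le\|f'\|_\infty\|u^{(n)}_t(s)\|_{L^2}+\|r'(s)\|_{L^2}$, so setting $\phi_n(t):=t^{1-\alpha}\|u^{(n)}_t(t)\|_{L^2}$ the two displayed bounds produce a recursion $\phi_{n+1}(t)\le C+c'\,t^{1-\alpha}\int_0^t(t-s)^{\alpha-1}s^{\alpha-1}\phi_n(s)\,ds$ whose iterates are summed by a Mittag-Leffler function exactly as in Lemma \oldref{lem:fracgronwall}; hence $\sup_n\sup_{[0,T]}\phi_n<\infty$, and then $\sup_n\sup_{[0,T]}\|u^{(n)}(t)\|_{\dH 2}<\infty$ as well. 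Since the $u^{(n)}$ already converge in $C([0,T];L^2(\Omega))$ to the mild solution $u$, this uniform control forces $u\in C([0,T];\dH 2)$ and $u\in C^1((0,T];L^2(\Omega))$ with the expected $\|u_t(t)\|_{L^2}=O(t^{\alpha-1})$ blow-up at the origin; continuity of $u$ into $\dH 2$ up to $t=0$ comes from dominated convergence in $\sum_j\lambda_j^2|E_{\alpha,1}(-\lambda_jt^\alpha)|^2|(u_0,\varphi_j)|^2$ and the vanishing of the Duhamel term in $\dH 2$. Finally, with $u\in C([0,T];\dH 2)=C([0,T];D(A))$ the right-hand side $\mathbb{L}u+f(u)+r$ lies in $C([0,T];L^2(\Omega))$, so the standard identification of the mild solution with a strong one for the Mittag-Leffler representation (cf. \cite{SakamotoYamamoto:2011a}) gives $D_t^\alpha u=\mathbb{L}u+f(u)+r$ in $L^2(\Omega)$, hence $D_t^\alpha u\in C((0,T];L^2(\Omega))$.

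The one genuinely non-routine point is the circular coupling between the spatial regularity $u\in C([0,T];\dH 2)$ — whose proof through the identity above needs $F=f(u)+r$ to be $C^1$ in time — and the temporal regularity $u\in C^1((0,T];L^2(\Omega))$, which in turn rests on $\dH 2$-information about $u_0$ and on $f(u_0)\in L^2(\Omega)$; carrying both norms simultaneously along the iteration and using $A\int_0^t\overline{\mathbb{E}}=I-\mathbb{E}$ (equivalently \eqref{eqn:EEbar}) to trade the non-integrable order-two smoothing of $\overline{\mathbb{E}}$ for a bounded operator acting on $F'$ is what breaks the loop. Lesser technicalities are the chain rule in $L^2(\Omega)$ and, should one want uniqueness in the stronger space rather than only in $C([0,T];L^2(\Omega))$, a modulus of continuity for $f'$; both are harmless under \eqref{eqn:ass-reg} read as $f\in C^1_b(\mathbb{R})$, and are handled as in \cite[Theorem 3.1]{JinLiZhou17}. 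Everything else is bookkeeping with Lemmas \oldref{lem:subdiff-E-space}, \oldref{lem:subdiff-Ebar-space} and \oldref{lem:fracgronwall} and Corollary \oldref{cor:mlfbdd}.
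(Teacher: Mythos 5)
Your proposal is correct in substance and shares the paper's backbone: the Duhamel representation, the time‑differentiated formula $u_t(t)=\bar{\mathbb{E}}(t)\bigl(f(u_0)+r(0)-Au_0\bigr)+\int_0^t\bar{\mathbb{E}}(s)\bigl(f'(u)u_t+r_t\bigr)(t-s)\,ds$ obtained from \eqref{eqn:EEbar}, and the weakly singular Gronwall argument of Lemma \oldref{lem:fracgronwall} giving $\|u_t(t)\|_{L^2(\Omega)}\lesssim t^{\alpha-1}$. Where you genuinely diverge is the $\dH 2$ bound. The paper writes $A^{\qq/2}\int_0^t\bar{\mathbb{E}}(t-s)F(s)\,ds=A^{\qq/2-1}(\mathbb{E}(t)-I)F(t)+\int_0^tA^{\qq/2}\bar{\mathbb{E}}(t-s)\bigl(F(s)-F(t)\bigr)\,ds$, freezing the forcing at $s=t$ and cancelling the non‑integrable singularity of $A\bar{\mathbb{E}}(t-s)$ against the H\"older bound $\|f(u(s))-f(u(t))\|_{L^2(\Omega)}\lesssim t^\alpha-s^\alpha$ that the $u_t$ estimate supplies; you integrate by parts all the way onto $F'$ via $A\int_0^\tau\bar{\mathbb{E}}=I-\mathbb{E}(\tau)$, which costs $F'\in L^1(0,T;L^2(\Omega))$ (available, since $s^{\alpha-1}$ is integrable) and buys a bounded‑operator formula for $Au(t)$ from which continuity in $t$ is immediate. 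Both devices rest on the same identity \eqref{eqn:EEbar}; the paper's needs only H\"older continuity of the forcing, yours needs one integrable derivative. The second organizational difference is that you construct the solution by Picard iteration, whereas the paper proves a priori estimates on a solution whose existence it delegates to \cite[Theorem 3.1]{JinLiZhou17}. Your writeup is thinnest exactly there: uniform bounds on $\|u^{(n)}_t(t)\|_{L^2(\Omega)}$ plus convergence of $u^{(n)}$ in $C([0,T];L^2(\Omega))$ do \emph{not} by themselves force the limit to be $C^1$ --- you must show the derivatives converge, and the resulting term $(f'(u^{(n)})-f'(u^{(n-1)}))u^{(n-1)}_t$ needs more than the bare boundedness of $f'$ in \eqref{eqn:ass-reg}; you flag this yourself, and the paper avoids it only by assuming differentiability of $u$ from the outset. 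Once $u_t$ is secured, your route to $C([0,T];\dH 2)$ and to $D_t^\alpha u\in C((0,T];L^2(\Omega))$ goes through (the paper obtains the latter more directly from $\|D_t^\alpha u(t)\|\le\int_0^t(t-s)^{-\alpha}\|u_t(s)\|\,ds$).
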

\begin{proof}
For the unique existence of $u$, it suffices to discuss the integral equation cf. \eqref{eqn:subdiff-solrep}
\begin{equation*}
  u(t) = \mathbb{E}(t)u_0 + \int_0^t \bar{\mathbb{E}}(t-s)f(u(s))\,ds + \int_0^t \bar{\mathbb{E}}(t-s)r(s)\,ds,\quad 0<t<T.
\end{equation*}
First we estimate $u_t(t)$, which is given by
\begin{equation*}
  \begin{aligned}
    u_t(t) &= \partial_t\mathbb{E}(t)u_0 
+\partial_t(\int_0^t\bar{\mathbb{E}}(s)f(u(t-s))\,ds) + \partial_t(\int_0^t\bar{\mathbb{E}}(s)r(t-s)\,ds)\\
     & = \bar{\mathbb{E}}(t)(f(u_0)+
\revision{
r(0)
}
-Au_0)+\int_0^t\bar{\mathbb{E}}(s)f'(u(t-s))u_t(t-s)\,ds + \int_0^t\bar{\mathbb{E}}(s)r_t(t-s)\,ds\,,
  \end{aligned}
\end{equation*}
% \Margin{Ref.1, Minor 4}
where we have used \eqref{eqn:EEbar}.
Consequently, for any $0<t\leq T$, by Lemma \oldref{lem:subdiff-Ebar-space} with $p=q=0$, there holds
\begin{equation*}
  \begin{aligned}
  \|u_t(t)&\|_{L^2(\Omega)}  \leq\int_0^t\|\bar{\mathbb{E}}(s)\|_{L^2(\Omega)\to L^2(\Omega)}\|f'(u(t-s))\|_{L^\infty(\mathbb{R})}\|u_t(t-s)\|_{L^2(\Omega)}\,ds\\
  & \quad + \|\bar{\mathbb{E}}(t)(r(0)+f(u_0)-Au_0)\|_{L^2(\Omega)} +\int_0^t\|\bar{\mathbb{E}}(s)\|_{L^2(\Omega)\to L^2(\Omega)}\|r_t(t-s)\|_{L^2(\Omega)}\,ds\\
  & \leq c\int_0^ts^{\alpha-1}\|f'(u(t-s))\|_{L^\infty(\mathbb{R})}\|u_t(t-s)\|_{L^2(\Omega)}\,ds\\
  &\quad +c\int_0^ts^{\alpha-1}\|r_t(t-s)\|_{L^2(\Omega)}\,ds + \|\bar{\mathbb{E}}(t)(r(0)+f(u_0)-Au_0)\|_{L^2(\Omega)}
  \end{aligned}
\end{equation*}
Using Assumption \eqref{eqn:ass-reg} on $f$, we have for $0<t\leq T$
\begin{equation*}
  \begin{aligned}
  \|u_t(t)\|_{L^2(\Omega)}&\leq  c\int_0^ts^{\alpha-1}\|u_t(t-s)\|_{L^2(\Omega)}\,ds + c\int_0^ts^{\alpha-1}\,ds \|r\|_{C^1([0,T];L^2(\Omega))} \\
  &\quad +ct^{\alpha-1}(\|r(0)\|_{L^2(\Omega)} + \|f(u_0)\|_{L^2(\Omega)} + \|u_0\|_{\dH 2}).
  \end{aligned}
\end{equation*}
By Lemma \oldref{lem:fracgronwall}, we obtain
\begin{equation*}
  \|u_t(t)\|_{L^2(\Omega)}\leq c t^{\alpha-1}(\|f(u_0)\|_{L^2(\Omega)}+\|r\|_{C^1([0,T];L^2(\Omega))} + \|u_0\|_{\dH 2}).
\end{equation*}
and $u\in C^1((0,T];L^2(\Omega))$ and thus
\begin{equation*}
  \|D_t^\alpha u(t)\| \leq \int_0^t(t-s)^{-\alpha}\|u_t(s)\|ds 
  \leq c(\|f(u_0)\|_{L^2(\Omega)}+\|r\|_{C^1([0,T];L^2(\Omega))} + \|u_0\|_{\dH 2}),
\end{equation*}
i.e., $D_t^\alpha u \in C([0,T];L^2(\Omega))$.
Further, using again \eqref{eqn:EEbar}, for any $\qq\in[0,2]$
\begin{equation*}
  \begin{aligned}
    A^\frac{\qq}{2} u(t) & = A^\frac{\qq}{2} \mathbb{E}(t) u_0 + \int_0^tA^\frac{\qq}{2} \bar{\mathbb{E}}(t-s)f(u(s))\,ds + \int_0^tA^\frac{\qq}{2} \bar{\mathbb{E}}(t-s)r(s)\,ds\\
    & = A^\frac{\qq}{2} \mathbb{E}(t)u_0 + A^{\frac{\qq}{2}-1}(\mathbb{E}(t)-I)f(u(t)) + A^{\frac{\qq}{2}-1}(\mathbb{E}(t)-I)r(t) \\
    &\quad + \int_0^tA^\frac{\qq}{2} \bar{\mathbb{E}}(t-s)(f(u(s))-f(u(t)))  + \int_0^tA^\frac{\qq}{2} \bar{\mathbb{E}}(t-s)(r(s)-r(t))\,ds\\
    &: =\sum_{i=1}^5\mathrm{I}_i
  \end{aligned}
\end{equation*}
Clearly, $\|\mathrm{I}_1\|_{L^2(\Omega)}\leq c\|u_0\|_{\dH \qq}$.
By Lemma \oldref{lem:subdiff-E-space}, for $\mathrm{I}_2$ and $\mathrm{I}_3$, we have
\begin{equation*}
  \begin{aligned}
    \|\mathrm{I}_2\|_{L^2(\Omega)} &\leq c\|f(u(t))\|_{L^2(\Omega)}\leq c\|f\|_{L^\infty(\mathbb{R})},\\
    \|\mathrm{I}_3\|_{L^2(\Omega)} &\leq c\|r(t)\|_{L^2(\Omega)}\leq c\|r\|_{C([0,T];L^2(\Omega))}.
  \end{aligned}
\end{equation*}
By the mean value theorem, we have
\begin{equation*}
  f(u(s))-f(u(t))=\int_0^1 f'(u(t)+\theta(u(s)-u(t)))\, d\theta (u(s)-u(t))\,.
\end{equation*}
Consequently,
\begin{equation*}
  \begin{aligned}
    \|f(u(s))&-f(u(t))\|_{L^2(\Omega)}  \leq \|f'\|_{L^\infty}\|u(s)-u(t)\|_{L^2(\Omega)}\\
      & \leq c \|\int_t^su_t(\tau)d\tau\|_{L^2(\Omega)}\leq c\int_s^t\|u_t(\tau)\|d\tau\\
      &\leq c\int_s^t\tau^{\alpha-1}d\tau(\|f(u_0)\|_{L^2(\Omega)}+\|r\|_{C^1([0,T];L^2(\Omega))} + \|u_0\|_{\dH 2})\\
      & \leq c(t^\alpha-s^\alpha)(\|f(u_0)\|_{L^2(\Omega)}+\|r\|_{C^1([0,T];L^2(\Omega))} + \|u_0\|_{\dH 2}).
  \end{aligned}
\end{equation*}
Hence, by Lemma \oldref{lem:subdiff-Ebar-space} with $p=\qq$, $q=0$, we deduce
\begin{equation*}
  \begin{aligned}
\|\mathrm{I}_4\|_{L^2(\Omega)}
     & \leq \int_0^t\|A^\frac{\qq}{2} \bar{\mathbb{E}}(t-s)\|(t^\alpha-s^\alpha)\,ds(\|f(u_0)\|_{L^2(\Omega)}+\|r\|_{C^1([0,T];L^2(\Omega))} + \|u_0\|_{\dH 2})\\
     &\leq c\int_0^t(t-s)^{\alpha-1-\frac{\qq}{2}\alpha}(t^\alpha-s^\alpha)\,ds
(\|f(u_0)\|_{L^2(\Omega)}+\|r\|_{C^1([0,T];L^2(\Omega))} + \|u_0\|_{\dH 2})\\
     &\leq c(\|f(u_0)\|_{L^2(\Omega)}+\|r\|_{C^1([0,T];L^2(\Omega))} + \|u_0\|_{\dH 2}).
  \end{aligned}
\end{equation*}
Likewise, since $r(s)-r(t)=\int_t^s r_t(\tau)d\tau$,
\begin{equation*}
  \|\mathrm{I}_5\|\leq c\|r\|_{C^1([0,T];L^2(\Omega))}.
\end{equation*}
Therefore,  we have $A^\frac{\qq}{2} u\in C([0,T];L^2(\Omega))$, i.e., $u\in C([0,T];\dH \qq)$.
This completes
the proof of the theorem.
\end{proof}

\section{An iteration scheme to recover $f$}\label{sect:iteration}

A natural scheme to recover $f$ is to evaluate the equation
\eqref{eqn:direct_prob_fractional} on the overposed boundary $t=T$.
That is, we define a map $\mathbb{T}: f \to u(x,T;f)$ by
\begin{equation}\label{eqn:iteration_scheme}
\mathbb{T} f(g) = D_t^\alpha u(x,T;f) - \mathbb{L}g(x) - r(x,T),\qquad
x \in \Omega
\end{equation}
where $g$ is the given data and define a sequence of approximations by the fixed point iteration $f_{k+1}=\mathbb{T}(f_k)$.
We start with the idealized situation of exact data and later on in Section \oldref{sec:noisydata} consider the realistic setting of noisy data.

Before we can utilize the map \eqref{eqn:iteration_scheme}
we must obtain conditions on the data that guarantee it is well defined.
Specifically, the range of $g(x)$ must contain all values of the solution
$u(x,t;f)$ for $t\leq T$:
\begin{equation}\label{eqn:range_condition}
\mbox{Range}\, u(x,t;f) \subset \mbox{Range}\,u(x,T;f) 
\qquad 0\leq t <T
\end{equation}
%We will assume \eqref{eqn:range_condition} for the moment and later give
%conditions on $u_0(x)$, $\gamma$ and $h(x,t)$ for which it holds.

The operator $\mathbb{T}$ is a concatenation $\mathbb{T}=\mathbb{P}\circ\mathbb{S}$ 
of the operators $\mathbb{P}:Z\to X$ and $\mathbb{S}:X\to Z$ 
%\footnote{$\mathbb{S}$ like ``solution'', $\mathbb{P}$ like ``projection''}
defined by
\begin{eqnarray}
&&(\mathbb{S} f)(x) = D_t^\alpha u(x,T;f) - \mathbb{L}g -r(x,T)\,, \quad x\in\Omega\,, \nonumber\\
&&\mathbb{P} y \mbox{ such that } (\mathbb{P} y)(g(x))= y(x)\,, \quad x\in\Omega\,, \mbox{ more generally } \nonumber\\
&&\mathbb{P} y\in\mbox{argmin}\{\|f^\sharp(g)-y\|_Z\, : \, f^\sharp\in X \mbox{ and }\|f^\sharp(u_0)+r(\cdot,0)
\revision{
-\mathbb{L}u_0
}
\|_{\dH \qq}\leq\rho_0\}\,,
\label{eqn:defP}
\end{eqnarray}
% \Margin{Ref.1, Minor 7}
where we choose $\rho_0\geq\|f_{act}(u_0)+r(\cdot,T)
\revision{
-\mathbb{L}u_0
}
\|_{\dH \qq}$ to make sure that a fixed point of $\mathbb{T}$ solves the original inverse problem \eqref{eqn:u}, \eqref{eqn:overposed_data}.

We use a bounded interval $I=[g_{\min},g_{\max}]$ with
\revision{
$g_{\min}=\min\{g(x) : x\in\Omega\}$, $g_{\max}=\max\{g(x) : x\in\Omega\}$,
}
% \Margin{Ref.1, Minor 5}
in order to be able to make use of embedding theorems.
Then we use the function space setting
\begin{equation}\label{eqn:XZ}
X=\{f\in W^{1,\infty}(I)\, : \, f(u_0)\in {\dH \qq}\}, \quad Z={\dH \qq}
\end{equation}
with $\qq$ such that ${\dH \qq}$ continuously embeds into $W^{1,\infty}(\Omega)$
and with the norm 
\begin{equation}\label{eqn:Xnorm}
\|f\|_X=\|f\|_{W^{1,\infty}(I)}+\|f(u_0)\|_{\dH \qq}.
\end{equation}
Moreover, we employ the projection $P(z)=\max\{\min\{z,g_{\max}\},g_{\min}\}$ on $I$ to define $u(x,t;f)$ as solution to 
\begin{equation}\label{eqn:u}
\begin{aligned}
D_t^\alpha u(x,t) - \mathbb{L} u(x,t) &= f(Pu(x,t))+r(x,t),\quad (x,t)\in\Omega\times(0,T) \\
\partial_\nu u(x,t) +\gamma(x) u(x,t)&= 0 \quad (x,t)\in\partial\Omega\times(0,T) \\
u(x,0) &= u_0 \quad x\in\Omega \,.
\end{aligned}
\end{equation}
Hence, based on the range condition \eqref{eqn:range_condition}, which we need to assume to hold for the 
actual nonlinearity $f_{act}$ only, and the fact that $u(x,T;f_{act})$ solves \eqref{eqn:direct_prob_fractional},
we can replace the original model \eqref{eqn:direct_prob_fractional} by the equation containing the projection \eqref{eqn:u}.

In order to establish well-definedness of the operator $\mathbb{P}$ we will assume throughout this section that
\begin{equation}\label{eqn:ass_g}
g\in {\dH \qq}\subseteq W^{1,\infty}(\Omega)\,, \quad \overline{g} \geq|\nabla g(x)|\geq \underline{g}>0, \ x\in \Omega\,, \quad \mathbb{L}g\in Z
\end{equation}  
for some $0<\underline{g}<\overline{g}$, so that there exist $c(g),C(g)>0$ such that 
\begin{equation}\label{eqn:cond_g}
c(g) \|f(g)\|_{W^{1,\infty}(\Omega)} \leq \|f\|_X \leq C(g) \|f(g)\|_Z \mbox{ for all }f\in X\,.
\end{equation}  
Indeed, we have 
%$\|f\|_X \leq C_{X,W^{1,\infty}(I)}\|f\|_{H^\qq(I)}$ with the embedding constant $C_{X,W^{1,\infty}(I)}$, where 
in case $\qq=2$ 
\[
\begin{aligned}
\|f\|_{H^2(I)}&=\Bigl(\int_I (|f|^2+|f'|^2+|f''|^2)\, dz\Bigr)^{1/2}\\
&\leq \tilde{C}(g) \Bigl(\int_\Omega\bigl(|f(g)|^2 + |f''(g)|\nabla g|^2+f'(g(x))\triangle g|^2\bigr)\, dx\Bigr)^{1/2}\\
&=\tilde{C}(g) \Bigl(\int_\Omega \Bigl(|f\circ g)(x))|^2+|\triangle(f\circ g)(x))|^2\Bigr)\, dx\Bigr)^{1/2}\\
&\leq \tilde{C}(g)\|(-\triangle+\mbox{id})^{-1}\|_{L^2(\Omega)\to H^2(\Omega)} \|f(g)\|_{H^2(\Omega)}\,,
\end{aligned}
\]
as well as in case $\qq=1$
\[
\begin{aligned}
\|f\|_{H^1(I)}&=\Bigl(\int_I (|f|^2+|f'|^2)\, dz\Bigr)^{1/2}\\
&\leq \tilde{C}(g) \Bigl(\int_\Omega \Bigl(|f(g)|^2 + |f'(g)\nabla g|^2\Bigr)\, dx\Bigr)^{1/2}\\
&=\tilde{C}(g) \Bigl(\int_\Omega \Bigl(|f\circ g)(x))|^2+|\nabla(f\circ g)(x))|^2\Bigr)\, dx\Bigr)^{1/2}
\leq \tilde{C}(g)\|f(g)\|_{H^1(\Omega)}\,,
\end{aligned}
\]
from which the general case $q\in[1,2]$ follows by interpolation.
Moreover, 
\[
\|f(g)\|_{W^{1,\infty}(\Omega)} = \sup_{x\in\Omega} |f(g(x))| + \sup_{x\in\Omega} |f'(g(x))|\, |\nabla g(x)| 
\begin{cases} 
\leq \max\{\overline{g},1\} \|f\|_{W^{1,\infty}(I)}\\
\geq \min\{\underline{g},1\} \|f\|_{W^{1,\infty}(I)}
\end{cases}
\]
\begin{lemma}
Under condition \eqref{eqn:ass_g}, the operator $\mathbb{P}:Z\to X$ is well-defined by \eqref{eqn:defP} 
between the spaces defined by \eqref{eqn:XZ}, and satisfies  
\begin{equation}\label{eqn:cond_gU}
\|\mathbb{P}y\|_X \leq 2C(g) \|y\|_Z \mbox{ for all }y\in Z\,.
\end{equation}  
\end{lemma}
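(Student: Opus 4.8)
The plan is to handle the $\mathrm{argmin}$ in \eqref{eqn:defP} by the direct method of the calculus of variations and then read off \eqref{eqn:cond_gU} from the two-sided estimate \eqref{eqn:cond_g}. As a preliminary, observe that the admissible set in \eqref{eqn:defP} must be nonempty for the definition to be meaningful: $f^\sharp\equiv 0$ lies in $W^{1,\infty}(I)$ with $f^\sharp(u_0)=0\in\dH{\qq}$, and after possibly enlarging $\rho_0$ — which only enlarges the admissible set and preserves the property that a fixed point of $\mathbb{T}$ solves the inverse problem — we may assume $\rho_0\geq\|r(\cdot,0)-\mathbb{L}u_0\|_{\dH{\qq}}$, so that $f^\sharp\equiv 0$ is admissible. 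Hence the infimum $m:=\inf\{\|f^\sharp(g)-y\|_Z : f^\sharp\text{ admissible}\}$ satisfies $0\leq m\leq\|y\|_Z<\infty$.

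For attainment, let $(f_n)\subset X$ be an admissible minimizing sequence, $\|f_n(g)-y\|_Z\to m$. For $n$ large $\|f_n(g)\|_Z\leq m+1+\|y\|_Z$, so the upper estimate in \eqref{eqn:cond_g} yields the uniform bound $\|f_n\|_X\leq C(g)(m+1+\|y\|_Z)$; in particular the $f_n$ are uniformly bounded and equi-Lipschitz on the compact interval $I$, so by Arzel\`a--Ascoli a subsequence converges uniformly on $I$ to some $f^\star\in W^{1,\infty}(I)$. Since $\mathrm{Range}(g),\mathrm{Range}(u_0)\subseteq I$ (the latter by the range condition \eqref{eqn:range_condition} at $t=0$), this gives $f_n(g)\to f^\star(g)$ and $f_n(u_0)\to f^\star(u_0)$ uniformly on $\Omega$, hence in $L^2(\Omega)$. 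On the other hand $\|f_n(g)\|_Z\leq m+1+\|y\|_Z$ and $\|f_n(u_0)\|_Z\leq\|f_n\|_X$ are bounded in the Hilbert space $Z=\dH{\qq}$, so along a further subsequence $f_n(g)\rightharpoonup f^\star(g)$ and $f_n(u_0)\rightharpoonup f^\star(u_0)$ weakly in $Z$ (the weak limits being identified through the $L^2$ convergence just established); in particular $f^\star(g),f^\star(u_0)\in\dH{\qq}$, so $f^\star\in X$. Weak lower semicontinuity of $v\mapsto\|v-y\|_Z$ and of $v\mapsto\|v+r(\cdot,0)-\mathbb{L}u_0\|_{\dH{\qq}}$ then gives $\|f^\star(g)-y\|_Z\leq m$ and $\|f^\star(u_0)+r(\cdot,0)-\mathbb{L}u_0\|_{\dH{\qq}}\leq\rho_0$; thus $f^\star$ is admissible and attains the infimum, so $\mathbb{P}y:=f^\star$ is well defined (and if the $\mathrm{argmin}$ is set-valued, any selection works, since the bound below applies to every minimizer).

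With attainment in hand the estimate is immediate: since $f^\star$ minimizes, $\|f^\star(g)-y\|_Z=m\leq\|y\|_Z$ by the preliminary step, whence $\|f^\star(g)\|_Z\leq\|f^\star(g)-y\|_Z+\|y\|_Z\leq 2\|y\|_Z$, and the upper estimate in \eqref{eqn:cond_g} gives $\|\mathbb{P}y\|_X=\|f^\star\|_X\leq C(g)\|f^\star(g)\|_Z\leq 2C(g)\|y\|_Z$, i.e.\ \eqref{eqn:cond_gU}.

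The main obstacle is the attainment argument, for a subtle reason: the a priori bound furnished by \eqref{eqn:cond_g} controls the minimizing sequence only in $W^{1,\infty}(I)$, which yields no more than uniform convergence of the $f_n$, while the objective and the constraint are measured in the strictly finer topology of $Z=\dH{\qq}$. The resolution is to notice that $f_n(g)$ and $f_n(u_0)$ are nonetheless automatically bounded in the Hilbert space $Z$, so one may pass to weak $Z$-limits, identify them with $f^\star(g)$ and $f^\star(u_0)$ using the cheaply available $L^2$ convergence, and close via weak lower semicontinuity of the norm. The only other point requiring care is that \eqref{eqn:defP} presupposes a nonempty admissible set; recording that $\rho_0$ may be taken large enough for the trivial competitor to be admissible both fixes this and, through the suboptimality inequality $m\leq\|y\|_Z$, is precisely what produces the factor $2$ in \eqref{eqn:cond_gU}.
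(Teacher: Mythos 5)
Your proof is correct and follows the same overall strategy as the paper: establish existence of a minimizer by the direct method, then obtain \eqref{eqn:cond_gU} from the triangle inequality and suboptimality of the zero competitor via the upper bound in \eqref{eqn:cond_g} — that last step is verbatim the paper's. Where you differ is in the compactness mechanism: the paper disposes of existence in one line by noting that the admissible set is a nonempty, closed, convex (hence weakly* closed) subset of $X$, which is the dual of a separable space, so bounded minimizing sequences have weakly* convergent subsequences; you instead extract convergence by Arzel\`a--Ascoli on the equi-Lipschitz sequence $(f_n)$ together with weak compactness of bounded sets in the Hilbert space $Z$, identifying the weak limits of $f_n(g)$ and $f_n(u_0)$ through the uniform convergence. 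Your route is more elementary and self-contained (it avoids justifying the predual structure of $X$), at the cost of length. You also make explicit two points the paper leaves implicit: that the admissible set is nonempty (the paper compares with $f^\sharp=0$ without checking that $0$ satisfies the constraint, which requires $\|r(\cdot,0)-\mathbb{L}u_0\|_{Z}\leq\rho_0$), and that $\mathrm{Range}(u_0)\subseteq I$ is needed for $f(u_0)$ to make sense. Both are worthwhile clarifications; no gaps.
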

\begin{proof}
Existence of a minimizer follows from the fact that \\
$\{\|f^\sharp(g)-y\|_Z\, : \, f^\sharp\in X \mbox{ and }\|f(u_0)\|_{\dH \qq}\leq\rho_0\}$
is a nonempty closed convex (hence weakly* closed) subset of the space $X$ which is the dual of a separable space.
Moreover, from \eqref{eqn:cond_g} and the triangle inequality as well as by minimality of $\mathbb{P}f$, comparing with $f^\sharp=0$, we get 
\begin{equation}\label{eqn:fXyZ}
\begin{aligned}
\|\mathbb{P}y\|_X \leq& C(g) \|(\mathbb{P}y)(g)\|_Z 
\leq C(g) \Bigl(\|(\mathbb{P}y)(g)-y\|_Z+\|y\|_Z\Bigr)\\ 
\leq& C(g) \Bigl(\|0-y\|_Z+\|y\|_Z\Bigr)= 2C(g)\|y\|_Z
\end{aligned}\end{equation}
\end{proof}

\begin{remark}\label{rem:P}
\revrev{
For $f\in X$, $r(\cdot,T)\in Z$, 
under the attainability condition 
\begin{equation}\label{eqn:attainability}
\begin{aligned}
&\|f(u_0)+r(\cdot,0)-\mathbb{L}u_0\|_{{\dH \qq}}<\rho_0\mbox{ and }\\
&y:=f(u(\cdot,T;f))+\mathbb{L}(u(\cdot,T;f)-g) \in Z_g:=\{f^\sharp(g)\, : \, f^\sharp\in X\}
\end{aligned}
\end{equation}
any minimizer $f^+$ of \eqref{eqn:defP} satisfies $f^+(g)=y$ and is therefore unique, due to the fact that $g(\Omega)=I$. The second condition in \eqref{eqn:attainability} can be shown to be satisfied by the strict monotonicity of $g$ according to \eqref{eqn:ass_g}, if $\Omega=(0,L)\subseteq\mathbb{R}^1$ and $u_0\in W^{1,\infty}(\Omega)\cap H^\sigma(\Omega)$ is strictly monotone.
This can be seen by using the identity $f(u(\cdot,T;f))+\mathbb{L}(u(\cdot,T;f)-g)=D_t^\alpha u(\cdot,T;f)-r(\cdot,T)-\mathbb{L}g\in Z$ (this regularity of $D_t^\alpha u(\cdot,T;f)$ will be shown at the beginning of the next section) and using the assumed regularity \eqref{eqn:ass_g} of $g$, which transfers to its inverse and gives $f^+=y\circ g^{-1}\in W^{1,\infty}(I)\cap H^\sigma(I)\subseteq X$, as an easy verification of the chain rule in $W^{1,\infty}(a,b)\cap H^\sigma(a,b)$ for $(a,b)=\Omega$ and $(a,b)=I$ shows.
%by Lemma \oldref{lem:chainrule} in the appendix.
}
% \Margin{Ref.1, Major 1}

In our implementation we use a regularized version of $\mathbb{P}$ by restricting the search for a minimizer of 
$\|f^\sharp(g)-y\|_X$ to a finite dimensional subset of smooth functions in $X$.
This is actually an intrinsic part of the algorithm and to some extent explains its good contraction performance 
in spite of the fact that in an infinite dimensional function space setting,
contractivity of $\,\mathbb{T}$ can only be proven in the conditional sense
of Theorem \oldref{thm:contraction} below.
\end{remark}

\medskip

Well-definedness of the operator $\mathbb{S}:X\to L^2(\Omega)$ follows from Theorem \oldref{th:semilin}. 
The higher regularity $\mathbb{S}f\in {\dH \qq}$ will be established in the following section.

\subsection{Self-mapping property of the fixed point operator}

To prove that $D_t^\alpha u(x,T;f)\in Z$ we make use of the the representation
formulae \eqref{eqn:subdiff-solrep}, \eqref{eqn:solop-E}, \eqref{eqn:solop-Ebar} 
\[
  \begin{aligned}
u(\cdot,t)&= \sum_{j=1}^\infty \Bigl\{E_{\alpha,1}(-\lambda_j t^\alpha) (u_0,\varphi_j)\\
&\quad+\int_0^t s^{\alpha-1} E_{\alpha,\alpha}(-\lambda_j s^\alpha) (f(Pu(\cdot,t-s))+r(\cdot,t-s),\varphi_j)\, ds\Bigr\} \varphi_j
  \end{aligned}
\]
hence
\[
  \begin{aligned}
u_t(\cdot,t)&=
\sum_{j=1}^\infty \Bigl\{-\lambda_j t^{\alpha-1} E_{\alpha,\alpha}(-\lambda_j t^\alpha) (u_0,\varphi_j)
+t^{\alpha-1} E_{\alpha,\alpha}(-\lambda_j t^\alpha) (f(Pu_0)+r(\cdot,0),\varphi_j)\\
&\quad+\int_0^t s^{\alpha-1} E_{\alpha,\alpha}(-\lambda_j s^\alpha) (f'(Pu(\cdot,t-s))Pu_t(\cdot,t-s)+r_t(\cdot,t-s),\varphi_j)\, ds\Bigr\} \varphi_j
  \end{aligned}
\]
which by the triangle inequality and Young's inequality for convolution \eqref{eqn:Young_convolution} (with $p=\infty$, $q=Q$, $r=Q^*=\frac{Q}{Q-1}$) yields 
\[
  \begin{aligned}
&\|u_t(t)\|_{\dH \qq} \leq 
\Bigl(\sum_{j=1}^\infty \lambda_j^{\qq+2} \Bigl(t^{\alpha-1} E_{\alpha,\alpha}(-\lambda_j t^\alpha)\Bigl)^2 (u_0,\varphi_j)^2\Bigr)^{1/2}\\
&\quad
+\Bigl(\sum_{j=1}^\infty \lambda_j^\qq \Bigl(t^{\alpha-1} E_{\alpha,\alpha}(-\lambda_j t^\alpha)\Bigl)^2 (f(Pu_0)+r(\cdot,0),\varphi_j)^2\Bigr)^{1/2}\\
&\quad+\Bigl(\sum_{j=1}^\infty \lambda_j^\qq \Bigl(\int_0^t s^{\alpha-1} E_{\alpha,\alpha}(-\lambda_j s^\alpha) (f'(Pu(\cdot,t-s))Pu_t(\cdot,t-s)+r_t(\cdot,t-s),\varphi_j)\, ds\Bigr)^2\Bigr)^{1/2}\\
&\leq t^{\alpha-1} \|f(Pu_0)+r(\cdot,0)-\mathbb{L}u_0\|_{{\dH \qq}}
+ \tilde{C}_\alpha^{2/Q^*}\|f'(Pu)Pu_t + r_t\|_{L^{Q^*}(0,t;L^2(\Omega))}
  \end{aligned}
\]
for $Q\leq \frac{2(2-\theta)}{2-2\theta+\qq}$, that is, $Q^*=\frac{Q}{Q-1}\geq \frac{2(2-\theta)}{2-\qq}$.

Here we have used the fact that 
\begin{equation}\label{eqn:estintEalphaalpha}
\int_0^t\Bigl(s^{\alpha-1}E_{\alpha,\alpha}(-\lambda_j s^\alpha)\Bigr)^2\ ds\leq \tilde{C}_\alpha^2 \lambda_j^{-\theta}
\end{equation}
for $\theta\in(0,2-1/\alpha)$ 
\revision{
$\alpha\in(\frac12,1]$ 
}
% \Margin{Ref.1, Minor 6}
and 
\[
\begin{aligned}
&\Bigl(\int_0^t \left|s^{\alpha-1} E_{\alpha,\alpha}(-\lambda_j s^\alpha)\right|^Q\, ds\Bigr)^{\frac{2}{Q}}\\
&\leq 
\left(\int_0^t \left|s^{\alpha-1} E_{\alpha,\alpha}(-\lambda_j s^\alpha)\right|\, ds\right)^{2\frac{2-Q}{Q}}
\left(\int_0^t \left|s^{\alpha-1} E_{\alpha,\alpha}(-\lambda_j s^\alpha)\right|^2\, ds\right)^{2\frac{Q-1}{Q}}\\
&\leq \tilde{C}_\alpha^{4\frac{Q-1}{Q}} \lambda_j^{-\frac{2}{Q}(2-Q+\theta(Q-1))}\,,
\end{aligned}
\]
which leads to the requirement  $\qq\leq 
\frac{2}{Q}(2-Q+\theta(Q-1))=
2-2\frac{Q-1}{Q}(2-\theta)%\leq2
$, 
(that is, $Q\leq \frac{2(2-\theta)}{2-2\theta+\qq}$,)
in order to cancel the powers of $\lambda_j$ 
in the term containing $f'(Pu)$, so that no space derivatives are finally applied to $f'(Pu)$.

We can therefore further estimate, noting that by the range condition \eqref{eqn:range_condition} $u_0=Pu_0$
\begin{equation}\label{eqn:estut}
  \begin{aligned}
    \|u_t(t)\|_{\dH \qq} 
&\leq t^{\alpha-1} \Bigl(\|f(u_0)+r(\cdot,0)-\mathbb{L}u_0\|_{{\dH \qq}}\Bigr)\\
&\quad
+ \tilde{C}_\alpha^{2/Q^*}\Bigl(\|f'\|_{L^\infty(I)}\|u_t\|_{L^{Q^*}(0,t;L^2(\Omega))}+\|r_t\|_{L^{Q^*}(0,t;L^2(\Omega))}\Bigr)\\
&\leq t^{\alpha-1} \Bigl(\|f(u_0)+r(\cdot,0)-\mathbb{L}u_0\|_{{\dH \qq}}\Bigr)\\
&\quad+ \tilde{C}_\alpha^{2/Q^*}\Bigl(\frac{1}{\lambda_1^\qq}\|f'\|_{L^\infty(I)}\|u_t\|_{L^{Q^*}(0,t;\dH q)}+\|r_t\|_{L^{Q^*}(0,t;L^2(\Omega))}\Bigr).
  \end{aligned}
\end{equation}

If $Q^*<\infty$ then for $\eta(t)=\|u_t(t)\|_{\dH \qq}^{Q^*}$ this means
\[
\begin{aligned}
\eta(t)&\leq \Bigl(C_0t^{\alpha-1} + C_r(t) + C_1 \Bigl(\int_0^t \eta(s)\, ds\Bigr)^{1/Q^*}\Bigr)^{Q^*}\\
&\leq \tilde{C}_0 t^{Q^*(\alpha-1)} + \tilde{C}_r(t) + \tilde{C}_1 \int_0^t \eta(s)\, ds
\end{aligned}
\]
for $C_0=\|f(u_0)+r(\cdot,0)-\mathbb{L}u_0\|_{{\dH \qq}}$, $C_1=\frac{\tilde{C}_\alpha^{2/Q^*}}{\lambda_1^\qq}\|f'\|_{L^\infty(I)}$, $C_r(t)=\tilde{C}_\alpha^{2/Q^*}\|r_t\|_{L^{Q^*}(0,t;L^2(\Omega))}$
$\tilde{C}_i=2^{Q^*-1} C_i^{Q^*}$ for $i=0,1,r$.

Gronwall's inequality \eqref{eqn:Gronwall2} yields
\[
\eta(t)\leq \tilde{C}_0t^{Q^*(\alpha-1)}+\tilde{C}_r(t)+\tilde{C}_1\int_0^t \Bigl(\tilde{C}_0 s^{Q^*(\alpha-1)}+\tilde{C}_r(s)\Bigr)e^{\tilde{C}_1(t-s)}\, ds\Bigr)\,,
\]
provided $\alpha$ is sufficiently close to one:
$\alpha>1-\frac{1}{Q^*}=\frac{1}{Q}$.
That is,
\begin{equation}\label{est:ut}
\begin{aligned}
&\|u_t(T)\|_{\dH \qq} \\
&\leq 2^{1-1/Q^*}\Bigl( 
\Bigl( C_0 T^{\alpha-1}+ C_r(T)+C_1 \Bigl(\int_0^T \Bigl(\tilde{C}_0 s^{Q^*(\alpha-1)}+\tilde{C}_r(s)\Bigr) e^{\tilde{C}_1(T-s)}\, ds\Bigl)^{1/Q^*} \Bigr)\\
&=:\Phi\bigl(\|f(u_0)+r(\cdot,0)-\mathbb{L}u_0\|_{{\dH \qq}},T,\|f'\|_{L^\infty(I)},\|r_t\|_{L^{Q^*}(0,t;L^2(\Omega))}\bigl) 
\end{aligned}
\end{equation}
(note that $C_0$, $\tilde{C}_0$ contain $\|f(u_0)+r(\cdot,0)-\mathbb{L}u_0\|_{{\dH \qq}}$ 
and $C_1$, $\tilde{C}_1$ contain $\|f'\|_{L^\infty(I)}$) 

Note that $Q^*=\infty$ does not work in the case $\alpha<1$, because of the singularity at $t=0$.

Also note that the condition on $\alpha$ arising here is compatible with the previous one 
$Q\leq \frac{2(2-\theta)}{2-2\theta+\qq}$%$=1+\frac{2-\qq}{2(2-\theta)+\qq-2}$ 
with $\theta\in(0,2-1/\alpha)$, since both together mean
$\alpha>\frac{1}{Q}\geq \frac{2-2\theta+\qq}{2(2-\theta)}=1-\frac{2-\qq}{2(2-\theta)}>1-\frac{2-\qq}{2}\alpha$.
and can therefore be satisfied provided $\qq<2$ and $\alpha$ is sufficiently close to one: $\alpha>1-\frac{2-\qq}{4-\qq}$. 
That is, since we need $\qq>\frac32$ to guarantee continuity of the embedding $\dH{\qq}\to W^{1,\infty}(\Omega)$, this implies $\alpha>\frac45$.

The function $\Phi$ appearing in the estimate \eqref{est:ut} is an increasing function of the initial data $\|f(u_0)+r(\cdot,0)-\mathbb{L}u_0\|_{{\dH \qq}}$, of $\|f'\|_{L^\infty(I)}$ and of $\|r_t\|_{L^{Q^*}(0,T;L^2(\Omega))}$; moreover for fixed $T$ and bounded $\|f'\|_{L^\infty(I)}$, the value of $\Phi$ can be made arbitrarily small by making $\|f(u_0)+r(\cdot,0)-\mathbb{L}u_0\|_{{\dH \qq}}$ and $\|r_t\|_{L^{Q^*}(0,T;L^2(\Omega))}$ small.

\medskip

We thus define 
\begin{equation}\label{eqn:B}
B=\{f\in X\, : \, \|f\|_{W^{1,\infty}(I)}\leq \rho\,, \ \|f(u_0)+r(\cdot,0)-\mathbb{L}u_0\|_{{\dH \qq}}\leq \rho_0\}
\end{equation}
and apply the above estimate \eqref{est:ut} to get, for any $f\in B$, 
\begin{equation}\label{eqn:rho}
\begin{aligned}
\|\mathbb{T}(f)\|_{W^{1,\infty}(I)} &=\|\mathbb{P}(\mathbb{S}(f))\|_{W^{1,\infty}(I)} \\
&\leq C(g) C_{\dot{H}^\qq,W^{1,\infty}}^\Omega
\Bigl( \Phi\bigl(\rho_0,T,\rho,\|r_t\|_{L^{Q^*}(0,t;L^2(\Omega))}\bigl) + \|
\revision{
r(\cdot,T)-
}
\mathbb{L}g\|_Z \Bigr)
\leq \rho\,,
\end{aligned}
\end{equation}
% \Margin{Ref.1, Minor 8}
where the second inequality in \eqref{eqn:rho} can be achieved by choosing $\rho$ sufficiently large $\rho> C(g) C_{\dot{H}^\qq,W^{1,\infty}}^\Omega\|\mathbb{L}g\|_Z$, and then choosing $\rho_0$, $\|r_t\|_{L^{Q^*}(0,t;L^2(\Omega))}$ small enough so that $\Phi\bigl(\rho_0,T,\rho,\|r_t\|_{L^{Q^*}(0,t;L^2(\Omega))}\bigl)\leq 
\frac{1}{C(g)C_{\dot{H}^\qq,W^{1,\infty}}^\Omega}\rho - \|\mathbb{L}g\|_Z$.
Moreover, by the constraint in the definition \eqref{eqn:defP} of $\mathbb{P}$, we have  
\begin{equation}\label{eqn:rho0}
\begin{aligned}
\|\mathbb{T}(f)(u_0)+r(\cdot,0)-\mathbb{L}u_0\|_{{\dH \qq}} &\leq \rho_0\,,
\end{aligned}
\end{equation}
i.e., altogether $\mathbb{T}(f)\in B$, provided $f\in B$. 

\subsection{Weak* compactness of $B$ and weak* continuity of $\mathbb{T}$}

The set $B$ is by definition weakly* compact and convex in the Banach space $X$ with norm $\|f\|_{X}= \|f\|_{W^{1,\infty}(I)}+\|f(u_0)\|_{{\dH \qq}}$.
For any sequence $(f_n)_{n\in\mathbb{N}}\in B$ converging weakly* in $X$ to $f\in X$ we have that the sequence of images under $\mathbb{T}$, i.e., $\mathbb{T}(f_n)$, is contained in the weakly* compact set $B$. Using this 
and compactness of the embeddings $X\to L^\infty(I)$ (where boundedness of the interval $I$ is crucial)
we can extract a subsequence with indices $(n_k)_{k\in\mathbb{N}}$ and an element $f^+\in B$ such that 
\begin{equation}\label{eqn:nk}
\begin{aligned}
\mathbb{T}(f_{n_k})\stackrel{*}{\rightharpoonup} f^+\mbox{ in } X\,,\\
\mathbb{T}(f_{n_k})\to f^+\mbox{ in } L^\infty(I)\,,\\
f_{n_k}\to f\mbox{ in } L^\infty(I)\,.
\end{aligned}
\end{equation}
It remains to prove that $f^+=\mathbb{T}(f)$.
For this purpose, we use the fact that the difference $\hat{u}_n:=u(x,t;f_n)-u(x,t;f)$ of solution to \eqref{eqn:u} solves
\begin{equation}\label{eqn:uhatk}
D_t^\alpha \hat{u}_n -\mathbb{L}\hat{u}_n-q_n\hat{u}_n=\hat{f}_n(Pu)\,,
\end{equation}
with homogeneous initial and impedance boundary conditions, where \\
$q_n(x,t)=\int_0^1f_n'(P(u(x,t;f)+\sigma\hat{u}_n(x,t)))\, d\sigma P$, $\hat{f}_n=f_n-f$, and $u=u(x,t;f)$.
From the representation \eqref{eqn:subdiff-solrep}, \eqref{eqn:solop-E}, \eqref{eqn:solop-Ebar} 
\begin{equation}\label{eqn:uhat}
\begin{aligned}
\hat{u}_n(x,t) =&\sum_{j=1}^\infty \int_0^t(t-s)^{\alpha-1}E_{\alpha,\alpha}(-\lambda_j(t-s)^\alpha)((q_n\hat{u}_n+\hat{f}_n(Pu))(\cdot,s),\varphi_j)\,ds\varphi_j(x)\\
=&\sum_{j=1}^\infty \hat{u}_n^j(t)\varphi_j(x),
\end{aligned}
\end{equation}
Young's inequality \eqref{eqn:Young_convolution} and \eqref{eqn:estintEalphaalpha}, as well as the fact that $f_n\in B$, allows us to obtain the crude estimate 
\[
\begin{aligned}
\|\hat{u}_n(\cdot,t)\|_{L^2(\Omega)} 
&\leq \tilde{C}_\alpha\|q_n\hat{u}_n+\hat{f}_n(Pu)\|_{L^2(0,t;\dH{-\theta})}\\
&\leq \tilde{C}_\alpha\lambda_1^{-\theta}\Bigl( \|f_n'\|_{L^\infty(I)}\|\hat{u}_n\|_{L^2(0,t;L^2(\Omega))}+\|\hat{f}_n(Pu)\|_{L^2(0,t;L^2(\Omega))}\Bigr)\\
&\leq \tilde{C}_\alpha\lambda_1^{-\theta}\Bigl( \rho\|\hat{u}_n\|_{L^2(0,t;L^2(\Omega))}+\sqrt{T}\sqrt{|\Omega|}\|\hat{f}_n\|_{L^\infty(I)}\Bigr)\,,
\end{aligned}
\]
which by Gronwall's inequality \eqref{eqn:Gronwall2} yields
\[
\|\hat{u}_n(\cdot,t)\|_{L^\infty(0,T;L^2(\Omega))}\leq C(\rho,T,|\Omega|)\, \|\hat{f}_n\|_{L^\infty(I)}\,.
\]
Using the fact that $\hat{u}_n^j$ as defined in \eqref{eqn:uhat} satisfies the fractional ODE $ D_t^\alpha\hat{u}_n^j(t)+\lambda_j \hat{u}_n^j(t)=((q_n\hat{u}_n+\hat{f}_n(Pu))(\cdot,s),\varphi_j)$
we obtain
\begin{equation*}
\begin{aligned}
 D_t^\alpha\hat{u}_n(x,t) =&\sum_{j=1}^\infty \Bigl\{((q_n\hat{u}_n+\hat{f}_n(Pu))(\cdot,t) ,\varphi_j)\\
&\qquad-\lambda_j\int_0^t(t-s)^{\alpha-1}E_{\alpha,\alpha}(-\lambda_j(t-s)^\alpha)((q_n\hat{u}_n+\hat{f}_n(Pu))(\cdot,s),\varphi_j)\,ds\Bigr\}\varphi_j(x).
\end{aligned}
\end{equation*}
From Young's inequality \eqref{eqn:Young_convolution} and \eqref{eqn:estintEalphaalpha} we therefore obtain an estimate of $D_t^\alpha\hat{u}_k$ in a rather weak norm 
\begin{equation}\label{eqn:Dtalphauhat}
\begin{aligned}
&\|D_t^\alpha\hat{u}_n(\cdot,t)\|_{\dH{-(2-\theta)}}\\
&\leq \|(q_n\hat{u}_n+\hat{f}_n(Pu))(\cdot,t)\|_{\dH{-(2-\theta)}}+\|(q_n\hat{u}_n+\hat{f}_n(Pu))\|_{L^2(0,T;L^2(\Omega))}\\
&\leq (C_{\dot{H}^{2-\theta},L^2}^\Omega+\sqrt{T})\|(q_n\hat{u}_n+\hat{f}_n(Pu))\|_{L^\infty(0,T;L^2(\Omega))}\\
&\leq (C_{\dot{H}^{2-\theta},L^2}^\Omega+\sqrt{T})\Bigl(\|f_n'\|_{L^\infty(I)}\|\hat{u}_n\|_{L^\infty(0,T;L^2(\Omega))}+\|\hat{f}_n(Pu))\|_{L^\infty(0,T;L^2(\Omega))}\Bigr)\\
&\leq (C_{\dot{H}^{2-\theta},L^2}^\Omega+\sqrt{T})\Bigl(\rho C(\rho,T,|\Omega|) +\sqrt{|\Omega|}\Bigr) \, \|\hat{f}_n\|_{L^\infty(I)}
\end{aligned}
\end{equation}
Thus, 
\revision{
under the attainability condition \eqref{eqn:attainability}
}
% \Margin{Ref.1, Major 1}
\[
\|\mathbb{T}(f_{n_k})(g)-\mathbb{T}(f)(g)\|_{\dH{-(2-\theta)}}=\|D_t^\alpha\hat{u}_{n_k}(\cdot,T)\|_{\dH{-(2-\theta)}}
\to0\mbox{ as }k\to\infty
\]
by \eqref{eqn:nk}.
On the other hand, \eqref{eqn:nk} also implies  
\[
\|\mathbb{T}(f_{n_k})(g)-f^+(g)\|_{L^\infty(\Omega)}\leq \|\mathbb{T}(f_{n_k})-f^+\|_{L^\infty(I)}
\to0\mbox{ as }k\to\infty\,.
\]
Hence, the two limits need to coincide $\mathbb{T}(f)(g)=f^+(g)\in Z\subseteq C(\Omega)$, thus $\mathbb{T}(f)=f^+$.
A subsequence-subsequence argument yields weak* convergence in $X$ of the whole sequence $\mathbb{T}(f_n)$ to $\mathbb{T}(f)$.
 
\bigskip

Invoking Schauder's Fixed Point Theorem in locally convex topological spaces; see \cite{Fan1952}, we have proven the following result.

\begin{theorem}\label{thm:fixed_point_Schauder}
Let $\alpha\in(\frac45,1]$, $\Omega\subseteq\mathbb{R}^1$, 
\revision{ 
an open bounded interval
}
$\qq\in(\frac32,2]$ and assume that $g$ 
\revision{
is strictly monotone and 
}
% \Margin{Ref.1, Major 2}
satisfies 
\eqref{eqn:ass_g} with \eqref{eqn:XZ}, that $\rho_0$ as well as $\|r_t\|_{L^{Q^*}(0,T;L^2(\Omega))}$ are sufficiently small.\\
Then for large enough $\rho>0$ the operator $\mathbb{T}$ is a self-mapping on the bounded, closed and convex set $B$ as defined in \eqref{eqn:B} and $\mathbb{T}$ is weakly* continuous in $X$ as defined in \eqref{eqn:XZ}.
Hence $\mathbb{T} f$ has a fixed point $f\in B$. 
If this fixed point $f$ 
\revision{
is monotonically decreasing and 
}
% \Margin{Ref.1, Major 2}
satisfies \eqref{eqn:range_condition}, then $f$ solves the inverse problem \eqref{eqn:initial_bdry_conds}, \eqref{eqn:overposed_data}, \eqref{eqn:direct_prob_fractional}, .
\end{theorem}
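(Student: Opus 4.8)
The plan is to verify the hypotheses of Schauder's fixed point theorem in locally convex topological vector spaces (\cite{Fan1952}) for the operator $\mathbb{T}=\mathbb{P}\circ\mathbb{S}$ restricted to the set $B$ of \eqref{eqn:B}, with $X$ endowed with its weak* topology; since most of the analytic work is already carried out in the preceding subsections, the proof is largely a matter of assembly. First I would note that $B$ is nonempty and convex, and weakly* compact in $X$ --- being the intersection of a norm ball with the weakly* closed constraint $\{f:\|f(u_0)+r(\cdot,0)-\mathbb{L}u_0\|_{\dH{\qq}}\le\rho_0\}$, and $X$ being the dual of a separable space so that its bounded weak* subsets are metrizable. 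For the self-mapping property $\mathbb{T}(B)\subseteq B$: well-definedness of $\mathbb{S}$ as a map into $L^2(\Omega)$ is Theorem~\oldref{th:semilin}, while the higher regularity $\mathbb{S}f=D_t^\alpha u(\cdot,T;f)-\mathbb{L}g-r(\cdot,T)\in\dH{\qq}$ with a controllable norm is precisely the estimate \eqref{est:ut}, obtained from the Mittag--Leffler representation of $u_t(\cdot,T)$, Corollary~\oldref{cor:mlfbdd}, the kernel bound \eqref{eqn:estintEalphaalpha}, Young's convolution inequality \eqref{eqn:Young_convolution} and the classical Gronwall inequality \eqref{eqn:Gronwall2}. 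Composing with $\mathbb{P}$ through \eqref{eqn:cond_gU} gives \eqref{eqn:rho}: one fixes $\rho>C(g)\,C_{\dot{H}^{\qq},W^{1,\infty}}^\Omega\|\mathbb{L}g\|_Z$ and then takes $\rho_0$ and $\|r_t\|_{L^{Q^*}(0,T;L^2(\Omega))}$ small enough that $\Phi\le\tfrac{1}{C(g)\,C_{\dot{H}^{\qq},W^{1,\infty}}^\Omega}\rho-\|\mathbb{L}g\|_Z$, so that $\|\mathbb{T}f\|_{W^{1,\infty}(I)}\le\rho$; the remaining constraint $\|\mathbb{T}f(u_0)+r(\cdot,0)-\mathbb{L}u_0\|_{\dH{\qq}}\le\rho_0$ is automatic, being built into the admissible set in the definition \eqref{eqn:defP} of $\mathbb{P}$.

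Next I would establish weak* continuity of $\mathbb{T}$ on $B$. Given $f_n\stackrel{*}{\rightharpoonup}f$ in $B$, the images $\mathbb{T}(f_n)$ lie in the weakly* compact $B$, so along a subsequence $\mathbb{T}(f_{n_k})\stackrel{*}{\rightharpoonup}f^+\in B$, and by the compact embedding $X\hookrightarrow L^\infty(I)$ --- where boundedness of $I$ is essential --- also $\mathbb{T}(f_{n_k})\to f^+$ and $f_{n_k}\to f$ in $L^\infty(I)$. To identify $f^+=\mathbb{T}(f)$ one uses $\hat u_n=u(\cdot,\cdot;f_n)-u(\cdot,\cdot;f)$, which solves the linear problem \eqref{eqn:uhatk}; \eqref{eqn:Young_convolution}, \eqref{eqn:estintEalphaalpha} and Gronwall give $\|\hat u_n\|_{L^\infty(0,T;L^2(\Omega))}\lesssim\|\hat f_n\|_{L^\infty(I)}$, and then the weak-norm bound \eqref{eqn:Dtalphauhat} for $D_t^\alpha\hat u_n$. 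Invoking the attainability relation (Remark~\oldref{rem:P}), $\mathbb{T}(f_{n_k})(g)=\mathbb{T}(f)(g)+D_t^\alpha\hat u_{n_k}(\cdot,T)\to\mathbb{T}(f)(g)$ in $\dH{-(2-\theta)}$, while simultaneously $\mathbb{T}(f_{n_k})(g)\to f^+(g)$ in $L^\infty(\Omega)$; the two limits must agree, and since $g(\Omega)=I$ this forces $f^+=\mathbb{T}(f)$. A subsequence--subsequence argument upgrades this to $\mathbb{T}(f_n)\stackrel{*}{\rightharpoonup}\mathbb{T}(f)$ for the whole sequence, and since $B$ is weak*-metrizable, $\mathbb{T}|_B$ is weak* continuous.

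With $B$ nonempty, convex and weak*-compact and $\mathbb{T}$ weak*-continuous from $B$ into $B$, Fan's theorem \cite{Fan1952} produces a fixed point $f=\mathbb{T}(f)\in B$. For the last assertion, suppose this $f$ is monotonically decreasing and satisfies the range condition \eqref{eqn:range_condition}. Then the projection $P$ is inactive along the trajectory $u(\cdot,t;f)$ for $t\le T$, so $u(\cdot,\cdot;f)$ actually solves the unprojected equation \eqref{eqn:direct_prob_fractional} with the prescribed initial and (homogeneous) impedance boundary data; in particular the identity $D_t^\alpha u(\cdot,T;f)-r(\cdot,T)-\mathbb{L}g=f(u(\cdot,T;f))+\mathbb{L}(u(\cdot,T;f)-g)$ holds in $Z$, so the attainability condition \eqref{eqn:attainability} is met and Remark~\oldref{rem:P} gives $\mathbb{T}(f)(g(x))=\mathbb{S}f(x)$ pointwise (using strict monotonicity of $g$ and $g(\Omega)=I$). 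The fixed-point equation then reads $f(g(x))=D_t^\alpha u(x,T;f)-\mathbb{L}g(x)-r(x,T)$; subtracting \eqref{eqn:direct_prob_fractional} at $t=T$ and setting $W:=u(\cdot,T;f)-g$ yields $-\mathbb{L}W=f(u(\cdot,T;f))-f(g)=\bigl(\int_0^1 f'(g+\vartheta W)\,d\vartheta\bigr)W=:q\,W$ with $q\le0$ by the monotonicity of $f$. Since $W$ inherits the homogeneous impedance boundary condition and $-\mathbb{L}$ is coercive with smallest eigenvalue $\lambda_1>0$, pairing with $W$ in $L^2(\Omega)$ gives $\lambda_1\|W\|_{L^2(\Omega)}^2\le\langle-\mathbb{L}W,W\rangle=\int_\Omega q\,W^2\le0$, hence $W=0$, i.e.\ $u(\cdot,T;f)=g$. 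Thus $f$ solves the inverse problem \eqref{eqn:initial_bdry_conds}, \eqref{eqn:overposed_data}, \eqref{eqn:direct_prob_fractional}.

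The genuine difficulty lies in the self-mapping step, specifically the $\dH{\qq}$-regularity estimate \eqref{est:ut} for $D_t^\alpha u(\cdot,T;f)$. It forces the simultaneous constraints $\qq\in(\tfrac32,2]$ (so that $\dH{\qq}\hookrightarrow W^{1,\infty}(\Omega)$, which makes $\mathbb{P}$ and the projection meaningful and, crucially, keeps spatial derivatives from falling on $f'(Pu)$) and $\alpha>\tfrac45$, the latter arising from combining the exponent balance $Q\le\frac{2(2-\theta)}{2-2\theta+\qq}$ with $\theta\in(0,2-1/\alpha)$ against the Gronwall-closure requirement $\alpha>1-1/Q^*=1/Q$, together with the fact that the borderline $Q^*=\infty$ is unavailable for $\alpha<1$ because of the $t^{\alpha-1}$ singularity at the origin. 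Keeping enough of these constants explicit that $\mathbb{T}(B)\subseteq B$ can then be arranged by smallness of $\rho_0$ and $\|r_t\|_{L^{Q^*}(0,T;L^2(\Omega))}$ is the delicate bookkeeping; and because contractivity of $\mathbb{T}$ cannot be expected in this infinite-dimensional setting, the whole scheme rests on this weak* compactness/continuity package rather than on Banach's fixed point theorem.
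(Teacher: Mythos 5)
Your proposal is correct and follows essentially the same route as the paper: the self-mapping property via the regularity estimate \eqref{est:ut} combined with the projection bound \eqref{eqn:cond_gU} and the built-in constraint of \eqref{eqn:defP}, the weak* compactness/continuity package leading to Fan's fixed point theorem, and the concluding elliptic argument in which $w=u(\cdot,T;f)-g$ satisfies $\mathbb{L}w+\bar{y}w=0$ with $\bar{y}\leq0$ and homogeneous impedance conditions, hence vanishes. The only cosmetic difference is that the paper keeps the projection $P$ in this last step (writing $\bar{y}=\int_0^1 f'(g+\theta Pw)P\,d\theta\leq0$) instead of arguing that $P$ is inactive, but the sign argument and conclusion are identical.
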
 
\revision{
\begin{proof}
It only remains to prove that under conditions \eqref{eqn:range_condition} and \eqref{eqn:attainability}, a fixed point $f$ solves \eqref{eqn:initial_bdry_conds}, \eqref{eqn:overposed_data}, \eqref{eqn:direct_prob_fractional}.
By Remark \ref{rem:P} we have $f(g)=y= f(Pu(\cdot,T;f)+\mathbb{L}(u(\cdot,T;f)-g)$, hence the difference $w=u(\cdot,T;f)-g$ satisfies the elliptic PDE
\[
\mathbb{L}w +\bar{y}w =0 
\]
with $\bar{y}=\int_0^1 f'(g+\theta Pw))P\, d\theta\leq0$ and homogeneous boundary conditions, and therefore has to vanish.
\end{proof}
}
% \Margin{Ref.1, Major 2}

Below we will make a series of short remarks.

The question arises whether a similar result can achieved for $\alpha<\frac45$. we have not investigated all possibilities to prove this, but also have no evidence that it cannot be done. 

%Concerning the condition $y\in Z_g$ we point out that in case $\Omega$ is an interval in $\mathbb{R}^1$ and $g$, $u_0$ are strictly monotone $C^2$ functions, it is always satisfied, since then $Z_g=Z$. THIS IS NOW CONTAINED IN REMARK 3.1.
\revision{
The assumption of strict monotonicity of $g$ is used to guarantee attainability \eqref{eqn:attainability}, cf. Remark \ref{rem:P}.
Our iteration algorithm can be implemented without it and
in practice our scheme worked perfectly well also for non-monotone $g$.
}

Estimate \eqref{est:ut} shows that the influence of the initial data indeed decreases for larger $T$. This confirms the computational observation of faster convergence for larger $T$.

\subsection{Contractivity for monotone nonlinearities in the parabolic case -- uniqueness}
To establish a contraction property of $\mathbb{T}$ in the parabolic case, we will make use of the fact that the solution $u$ to the forward problem \eqref{eqn:direct_prob_fractional}, \eqref{eqn:initial_bdry_conds} and even its time derivative decays exponentially, provided $f$ is monotonically decreasing. 
In order to prove this, we restrict ourselves to $\mathbb{L}=\nabla\cdot(\mathbf{A}\nabla)$, where $\mathbf{A}\in\mathbb{R}^{n\times n}$ is a symmetric positive definite matrix, i.e., the coefficients of the elliptic differential operator $\mathbb{L}$ are supposed to be constant, and $r=0$.

Assume that $f'\leq0$, then from  
\begin{equation*}
\begin{aligned}
u_t - \mathbb{L} u &= f(Pu),\qquad (x,t)\in\Omega\times(0,T) \\
\partial_\nu u(x,t) +\gamma(x) u(x,t)&= 0
\quad (x,t)\in\partial\Omega\times(0,T) \\
u(x,0) &= u_0 \quad x\in\Omega \\
\end{aligned}
\end{equation*}
we obtain that $v = (u_t)^2$ satisfies
\begin{equation*}
\begin{aligned}
v_t - \mathbb{L} v &= 2f'(Pu)
\revision{
(u_t)^2
}
-2\nabla u_t^T\mathbf{A}\nabla u_t,\qquad (x,t)\in\Omega\times(0,T) \\
\partial_\nu v(x,t) +2\gamma(x) v(x,t)&= 0
\quad (x,t)\in\partial\Omega\times(0,T) \\
v(x,0) &= v_0 \quad x\in\Omega \,,
\end{aligned}
\end{equation*}
% \Margin{Ref.1, Minor 9}
where $v_0=(\mathbb{L} u_0+f(u_0))^2$.
Now if 
\begin{equation*}
\begin{aligned}
\overline{v}_t - \mathbb{L} \overline{v} &= 0,\qquad (x,t)\in\Omega\times(0,T) \\
\partial_\nu \overline{v}(x,t) +2\gamma(x) \overline{v}(x,t)&= 0
\quad (x,t)\in\partial\Omega\times(0,T) \\
\overline{v}(x,0) &= v_0 \quad x\in\Omega \,,
\end{aligned}
\end{equation*}
then the maximum principle shows that $\overline{v} \geq 0$ but more crucially, $v \leq \overline{v}$.
For $\overline{v}$  we can use the representation formula \eqref{eqn:subdiff-solrep}, \eqref{eqn:solop-E}, \eqref{eqn:solop-Ebar}, which yields
\begin{equation}\label{eqn:vdecay}
\|u_t(\cdot,t)\|_{L^4(\Omega)}^2\leq\|\overline{v}(\cdot,t)\|_{L^2(\Omega)}= 
\left(\sum_{j=1}^\infty e^{-2\lambda_j t} (v_0,\varphi_j)^2 \right)^{1/2}
\leq e^{-\lambda_1 t} \|v_0\|_{L^2(\Omega)}
\end{equation}
and
\begin{equation}\label{eqn:vdecayLinfty}
\begin{aligned}
&\|u_t(\cdot,t)\|_{L^\infty(\Omega)}^2\leq\|\overline{v}(\cdot,t)\|_{L^\infty(\Omega)}
\leq C_{\dot{H}^\mu,L^\infty}^\Omega \|\overline{v}(\cdot,t)\|_{{\dH \mu}}\\
&= C_{\dot{H}^\mu,L^\infty}^\Omega \left(\sum_{j=1}^\infty e^{-2\lambda_j t} \lambda_j^\mu (v_0,\varphi_j)^2 \right)^{1/2}
\leq C_{\dot{H}^\mu,L^\infty}^\Omega e^{-\lambda_1 t} \|v_0\|_{{\dH \mu}}\\
&= C_{\dot{H}^\mu,L^\infty}^\Omega e^{-\lambda_1 t} \|(\mathbb{L} u_0+f(u_0))^2\|_{{\dH \mu}}.
\end{aligned}
\end{equation}
\medskip
Now we will employ \eqref{eqn:vdecayLinfty} in order to show that
provided we have $f_1,\;f_2\in W^{2,\infty}(I)$,
then $\|\mathbb{T}(f_1)-\mathbb{T}(f_2)\|_{W^{1,\infty}(I)}\leq C e^{-\lambda_1 T} \|f_1-f_2\|_{W^{1,\infty}(I)}$ holds.

For $f_1$, $f_2\in X$, when taking the difference $\mathbb{T}(f_1)-\mathbb{T}(f_2)=f_1^+-f_2^+$, the terms containing $g$ explicitly and $r$ cancel, so that with $u^{(i)}(x,t)=u(x,t;f_i)$ we have $f_1^+(g)-f_2^+(g)=D_t^\alpha (u^{(1)}-u^{(2)})=D_t^\alpha z$ where $z$ solves
\begin{equation}\label{eqn:ibvp_z}
\begin{aligned}
D_t^\alpha z - \mathbb{L} z &= f_1(Pu^{(1)})-f_2(Pu^{(2)}) \\
    &= {\textstyle\int_0^1 f_1'(P(u^{(1)}+\theta z))\, d\theta}P \, z +(f_1-f_2)(Pu^{(2)}),\qquad (x,t)\in\Omega\times(0,T) \\
\partial_\nu z(x,t) +\gamma(x) z(x,t)&= 0
\quad (x,t)\in\partial\Omega\times(0,T) \\
z(x,0) &= 0 \quad x\in\Omega \,,
\end{aligned}
\end{equation}
and we can estimate $\|(\mathbb{T}(f_1)-\mathbb{T}(f_2))(g)\|_Z$ by estimating $D_t^\alpha z$.
\\
In the parabolic case $\alpha=1$ this can be done by differentiating \eqref{eqn:ibvp_z} with respect to time to obtain that $w=D_t z$ satisfies
\[
\begin{aligned}
D_t w - \mathbb{L} w &= f_1'(Pu^{(1)})Pu^{(1)}_t-f_2'(Pu^{(2)})Pu^{(2)}_t \\
    &= f_1'(Pu^{(1)}) P w +\tilde{y}\, u^{(2)}_t,\qquad (x,t)\in\Omega\times(0,T) \\
\partial_\nu w(x,t) +\gamma(x) z(x,t)&= 0
\quad (x,t)\in\partial\Omega\times(0,T) \\
w(x,0) &= f_1(u_0)-f_2(u_0) \quad x\in\Omega \,,
\end{aligned}
\]
with $\tilde{y}=(f_1'(Pu^{(1)})-f_2'(Pu^{(2)}))P=({\textstyle\int_0^1 f_1''(P(u^{(1)}+\theta z))\, d\theta} P\, z+(f_1-f_2)'(Pu^{(2)}))P$.

We assume that there exist potentials $\hat{q},\check{q}\in L^\infty(\Omega)$, $\hat{q},\check{q}\geq0$ such that 
\begin{equation}\label{eqn:c1hat}
\begin{aligned}
&\|{\textstyle\int_0^1 f_1'(P(u^{(1)}+\theta z))\, d\theta}+\hat{q}\|_{L^\infty(\Omega)}\leq \hat{c}_1\,, 
\\
&\| f_1'(Pu^{(1)})+\check{q}\|_{L^\infty(\Omega)}\leq \check{c}_1\,,
\end{aligned}
\end{equation}
for some $\hat{c}_1$, $\check{c}_1$, such that 
\revision{
$4\hat{c}_1<\hat{\lambda}_1^2$, $4\check{c}_1<\check{\lambda}_1^2$, 
}
% \Margin{Ref.1, Minor 10}
where $\hat{\lambda}_1$ is the smallest eigenvalue of $\mathbb{L}_{\hat{q}}=\mathbb{L}+\check{q}$ (and analogously for $\check{q}$). 
We rewrite both initial boundary value problems in terms of $\mathbb{L}_{\hat{q}}$, and  $\mathbb{L}_{\check{q}}$, respectively, and use corresponding eigensystems $\hat{\lambda}_j$, $\hat{\varphi}_j$, $\check{\lambda}_j$, $\check{\varphi}_j$. This yields
\begin{equation}\label{eqn:id_w}
\begin{aligned}
w(\cdot,t)=& \sum_{j=1}^\infty \Bigl\{e^{-\hat{\lambda}_j t} ((f_1-f_2)(u_0),\hat{\varphi}_j)\\
&+\int_0^t e^{-\hat{\lambda}_j (t-s)} ((f_1'(Pu^{(1)}(s)+\hat{q}) w(s) +\tilde{y}(s)\, u^{(2)}_t(s),\hat{\varphi}_j)\, ds\Bigr\} \hat{\varphi}_j\,,
\end{aligned}
\end{equation}
hence abbreviating
$df_j(s)=|(\tilde{y}(s)\, u^{(2)}_t(s),\hat{\varphi}_j)|$
\[
\begin{aligned}
\|w(\cdot,t)\|_{\dH{r}}^2
&\leq 
2\sum_{j=1}^\infty \hat{\lambda}_j^r \Bigl( \int_0^t e^{-\hat{\lambda}_j (t-s)} ((f_1'(Pu^{(1)}(s)+\hat{q}) w(s),\hat{\varphi}_j)+df_j(s)\, ds\Bigr)^2\\
&+2\sum_{j=1}^\infty \hat{\lambda}_j^r e^{-2\hat{\lambda}_j t}((f_1-f_2)(u_0),\hat{\varphi}_j)^2\,,
\end{aligned}
\]
where the first term on the right hand side can be estimated by 
\[
\begin{aligned}
\sum_{j=1}^\infty& \hat{\lambda}_j^r \Bigl( \int_0^t e^{-\hat{\lambda}_j (t-s)} ((f_1'(Pu^{(1)}(s)+\hat{q}) w(s),\hat{\varphi}_j)+ df_j(s)\, ds\Bigr)^2\\
&\leq \sum_{j=1}^\infty \int_0^t e^{-\hat{\lambda}_j (t-s)}\, ds\hat{\lambda}_j^r \, \int_0^t e^{-\hat{\lambda}_j (t-s)} \Bigl( ((f_1'(Pu^{(1)}(s)+\hat{q}) w(s),\hat{\varphi}_j)+ df_j(s)\Bigr)^2\, ds\\
&= \int_0^t \sum_{j=1}^\infty e^{-\hat{\lambda}_j (t-s)}  \hat{\lambda}_j^{r-1}(1-e^{-\hat{\lambda}_jt}) \Bigl(((f_1'(Pu^{(1)}(s)+\hat{q}) w(s),\hat{\varphi}_j)+ df_j(s)\Bigr)^2\, ds\\
&\leq  \int_0^t e^{-\hat{\lambda}_1 (t-s)} \sum_{j=1}^\infty \hat{\lambda}_j^{r-1} \Bigl(((f_1'(Pu^{(1)}(s)+\hat{q}) w(s),\hat{\varphi}_j)+ df_j(s)\Bigr)^2\, ds\\
&\leq 2 \int_0^t e^{-\hat{\lambda}_1 (t-s)} \|(f_1'(Pu^{(1)}(s)+\hat{q}) w(s)\|_{\dH{r-1}}^2+\|\tilde{y}(s)\, u^{(2)}_t(s)\|_{\dH{r-1}}^2\Bigr) \, ds\\
&\leq 2 \int_0^t e^{-\hat{\lambda}_1 (t-s)} \Bigl(\hat{c}_1\|w(s)\|_{\dH{r-1}}^2+\|\tilde{y}(s)\, u^{(2)}_t(s)\|_{\dH{r-1}}^2\Bigr)\, ds\,.
\end{aligned}
\]
This shows that we cannot take $r$ larger than one here, in order to avoid derivatives acting on the term $\tilde{y}(s)\, u^{(2)}_t(s)$ (note that the maximum principle argument for exponential decay of $u_t$ works for Lebesgue norms but
not for higher order Sobolev norms). As a consequence, an embedding into $W^{1,\infty}(\Omega)$ will not be possible here and we have to work with a weaker norm on the differences $f_1-f_2$ which, however, should still be strong enough to capture the $f_1-f_2$ difference term appearing in $\tilde{y}$. We will use the norm $|||f_1-f_2|||:=\|(f_1-f_2)(g)\|_{\dH{1}}$ and make the assumptions \eqref{eqn:ass_g} and $u_0\in W^{1,\infty}(\Omega)$, which, together with the fact that by Theorem \oldref{thm:fixed_point_Schauder} $\|\nabla u^{(2)}\|_{L^\infty(0,T;L^\infty(\Omega))}\leq T C_{\dot{H}^\qq,W^{1,\infty}}^\Omega \rho$, implies the existence of constants $\bar{C}_{-1}(g)$, $\bar{C}_0(g)$, $\bar{C}_1(g)$ such that 
\begin{equation}\label{eqn:ass_norm}
\begin{aligned}
&\|(f_1-f_2)(u_0)\|_{\dH{1}}\leq \bar{C}_0(g) \|(f_1-f_2)(g)\|_{\dH{1}} \\ 
&\|(f_1-f_2)(u^{(2)}))\|_{L^1(0,T;\dH{-1})}\leq \bar{C}_{-1}(g) \|(f_1-f_2)(g)\|_{\dH{1}}\\
&\|(f_1-f_2)'(u^{(2)}))\|_{L^\infty(0,T;L^2(\Omega))}\leq \bar{C}_1(g) \|(f_1-f_2)(g)\|_{\dH{1}}\,.
\end{aligned}
\end{equation}
Altogether we have 
\[
\begin{aligned}
\|w(\cdot,t)\|_{\dH{r}}^2
&\leq 2 e^{-2\hat{\lambda}_1 t}\|(f_1-f_2)(u_0)\|_{\dH{r}}\\
&+4\int_0^t e^{-\hat{\lambda}_1 (t-s)} \Bigl(\hat{c}_1\|w(s)\|_{\dH{r-1}}^2+\|\tilde{y}(s)\, u^{(2)}_t(s)\|_{\dH{r-1}}^2\Bigr)\, ds\,.
\end{aligned}
\]
For $\eta(s)=\|w(s)\|_{\dH{r}}^2+\frac{4}{\hat{c}_3}\|\tilde{y}(s)\, u^{(2)}_t(s)\|_{\dH{r-1}}^2$ with $\hat{c}_3=\frac{4 \hat{c}_1}{\hat{\lambda}_1}$, the above inequality becomes
\begin{equation}\label{eqn:intineqeta}
\eta(t)\leq \frac{4}{\hat{c}_3}\|\tilde{y}(t)\, u^{(2)}_t(t)\|_{\dH{r-1}}^2+e^{-2\hat{\lambda}_1t}\|(f_1-f_2)(u_0)\|_{\dH{r}}+\hat{c}_3\int_0^t e^{-\hat{\lambda}_1 (t-s)} \eta(s)\, ds\,.
\end{equation}
The convolution version of Gronwall's inequality Lemma \oldref{lem:Gronwall_conv}  
yields
\[
\begin{aligned}
\eta(t)\leq& 
\frac{4}{\hat{c}_3}\|\tilde{y}(t)\, u^{(2)}_t(t)\|_{\dH{r-1}}^2+e^{-2\hat{\lambda}_1t}\|(f_1-f_2)(u_0)\|_{\dH{r}}\\
&+\hat{c}_3\int_0^t e^{-(\hat{\lambda}_1-\hat{c}_3) (t-s)}\Bigl(\frac{4}{\hat{c}_3}\|\tilde{y}(s)\, u^{(2)}_t(s)\|_{\dH{r-1}}^2+e^{-2\hat{\lambda}_1s}\|(f_1-f_2)(u_0)\|_{\dH{r}}\Bigr) \, ds\,,
\end{aligned}
\]
hence 
\begin{equation}\label{eqn:est_w}
\begin{aligned}
\|w(t)\|_{\dH{r}}^2&\leq e^{-2\hat{\lambda}_1t}\|(f_1-f_2)(u_0)\|_{\dH{r}}\\
&\quad +4\int_0^t e^{-(\hat{\lambda}_1-\hat{c}_3) (t-s)}\|\tilde{y}(s)\, u^{(2)}_t(s)\|_{\dH{r-1}}^2\, ds\\
&\quad+\hat{c}_3 \int_0^t e^{-(\hat{\lambda}_1-\hat{c}_3) (t-s)} e^{-2\hat{\lambda}_1s}\, ds \ \|(f_1-f_2)(u_0)\|_{\dH{r}}\,.
\end{aligned}
\end{equation}
Analogously to \eqref{eqn:id_w}, we obtain for \eqref{eqn:ibvp_z} the identity
\[
\begin{aligned}
z(\cdot,t)=\sum_{j=1}^\infty \int_0^t e^{-\check{\lambda}_j (t-s)} ({\textstyle \int_0^1 f_1'(P(u^{(1)}(s)+\theta z(s)))\, d\theta} \, z(s) +(f_1-f_2)(u^{(2)}(s)),\check{\varphi}_j)\, ds\Bigr\} \check{\varphi}_j
\end{aligned}
\]
hence, using the replacements in \eqref{eqn:est_w}
\begin{equation*}
\begin{aligned}
w&\to z, \qquad (f_1-f_2)(u_0)\to0,\\
f_1'(Pu^{(1)})+\hat{q} &\to {\textstyle \int_0^1 f_1'(P(u^{(1)}(s)
+\theta z(s)))\, d\theta}+\check{q} \\
\tilde{y}\,u^{(2)}_t &\to (f_1-f_2)(u^{(2)})\,,\qquad
r\to\tilde{r},\\
\end{aligned}
\end{equation*}
we obtain
\[
\|z(\cdot,t)\|_{\dH{\tilde{r}}}^2\leq 
4\int_0^t e^{-(\check{\lambda}_1-\check{c}_3) (t-s)}\|(f_1-f_2)(u^{(2)}(s))\|_{\dH{\tilde{r}-1}}^2\, ds\,,
\]
which for sufficiently large $\tilde{r}$ would even yield
\begin{equation}\label{eqn:est_z}
\begin{aligned}
&\|z(\cdot,t)\|_{L^\infty(\Omega)}\leq C_{\dot{H}^{\tilde{r}},L^\infty}^\Omega 
\Bigl(\int_0^t e^{-(\check{\lambda}_1-\check{c}_3) (t-s)}\|(f_1-f_2)(u^{(2)}(s))\|_{\dH{\tilde{r}-1}}^2\, ds\Bigr)^{1/2}
\,,
\end{aligned}
\end{equation}
but actually we will only need $\tilde{r}=0$ below.
This allows us to estimate the term $\tilde{y}u^{(2)}_t$ as follows
\[
\begin{aligned}
&\|\tilde{y}(t)u^{(2)}_t(t)\|_{L^2(\Omega)}
\leq \|\tilde{y}(t)\|_{L^2(\Omega)} \|u^{(2)}_t(t)\|_{L^\infty(\Omega)}\\
&\leq \Bigl( \|f_1''\|_{L^\infty(I)} \|z(t)\|_{L^2(\Omega)} + \|(f_1-f_2)'(u^{(2)})(t)\|_{L^2(\Omega)}\Bigr) \|u^{(2)}_t(t)\|_{L^\infty(\Omega)}\\
&\leq \Bigl( \|f_1''\|_{L^\infty(I)}  \|(f_1-f_2)(u^{(2)})\|_{L^1(0,T;\dH{-1}} 
+ \|(f_1-f_2)'(u^{(2)})(t)\|_{L^2(\Omega)}\Bigr) \|u^{(2)}_t(t)\|_{L^\infty(\Omega)}\,.
\end{aligned}
\]
Inserting this together with \eqref{eqn:vdecayLinfty} into \eqref{eqn:est_w}, setting $r=1$, and using our assumption \eqref{eqn:ass_norm}, we obtain 
\[
\begin{aligned}
&|||\mathbb{T}(f_1)-\mathbb{T}(f_2)|||^2
=\|(\mathbb{T}(f_1)-\mathbb{T}(f_2))(g)\|_{\dH{1}}^2
=\|w(T)\|_{\dH{1}}^2\\
&\leq e^{-2\hat{\lambda}_1T}\|(f_1-f_2)(u_0)\|_{\dH{1}}^2\\
&\quad+4
\Bigl( \|f_1''\|_{L^\infty(I)}  \|(f_1-f_2)(u^{(2)})\|_{L^1(0,T;\dH{-1}} 
+ \|(f_1-f_2)'(u^{(2)})\|_{L^\infty(0,T;L^2(\Omega))}\Bigr)^2\\
&\qquad\times(C_{\dot{H}^\mu,L^\infty}^\Omega)^2 \|(\mathbb{L} u_0+f(u_0))^2\|_{{\dH \mu}}\, \int_0^T e^{-(\hat{\lambda}_1-\hat{c}_3) (T-s)} e^{-\hat{\lambda}_1 s} \, ds\\
&\quad
+\hat{c}_3 \int_0^T e^{-(\hat{\lambda}_1-\hat{c}_3) (T-s)} e^{-2\hat{\lambda}_1s}\, ds \ \|(f_1-f_2)(u_0)\|_{\dH{1}}^2\\
&\leq C e^{-(\hat{\lambda}_1-\hat{c}_3)T} \|(f_1-f_2)(g)\|_{\dH{1}}^2 = C e^{-(\hat{\lambda}_1-\hat{c}_3)T} |||f_1-f_2|||^2
\end{aligned}
\]
for $C= C_1 C_2+\Bigl(1+e^{-(\hat{\lambda}_1+\hat{c}_3)T}\Bigr) \bar{C}_0(g)^2$ where
$C_1=4\Bigl( \|f_1''\|_{L^\infty(I)} \bar{C}_{-1}(g)+ \bar{C}_1(g)\Bigr)^2$, 
$C_2=(C_{\dot{H}^\mu,L^\infty}^\Omega)^2\|(\mathbb{L} u_0+f(u_0))^2\|_{{\dH \mu}}\tfrac{1}{\hat{\lambda}_1+\hat{c}_3}$. 

Altogether we have proven:
\begin{theorem}\label{thm:contraction}
Let the assumptions of Theorem \oldref{thm:fixed_point_Schauder} and additionally $\alpha=1$, $r=0$,
$f_1,f_2\in W^{2,\infty}_-(I):=\{f\in W^{2,\infty}(I)\, : f'\leq0\mbox{ and }(\mathbb{L} u_0+f(u_0))^2\in{\dH \mu}\}$ for some $\mu\in(\frac12,2]$ hold.\\
Then there exists a constant $C>0$ depending only on $\|f_1''\|_{L^\infty(I)}$, $\|(\mathbb{L} u_0+f(u_0))^2\|_{{\dH \mu}}$ and the constants $\bar{C}_{-1}(g)$, $\bar{C}_0(g)$, $\bar{C}_1(g)$ in \eqref{eqn:ass_norm}, such that 
\[ 
\|(\mathbb{T}(f_1)-\mathbb{T}(f_2))(g)\|_{\dH{1}}\leq 
\revision{
C e^{-(\hat{\lambda}_1-4\hat{c}_1/\hat{\lambda}_1)T/2} \|(f_1-f_2)(g)\|_{\dH{1}}\,.
}
\]
\revision{
with $\hat{c}_1$ as in \eqref{eqn:c1hat}.
}
\end{theorem}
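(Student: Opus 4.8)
The plan is to run the chain of estimates assembled in the discussion above and package it. Since $\alpha=1$ and $r=0$, the terms involving $g$ and $r$ cancel in the difference, so under the attainability condition \eqref{eqn:attainability} and Remark \oldref{rem:P} one has $(\mathbb{T}(f_1)-\mathbb{T}(f_2))(g)=D_t z(\cdot,T)=w(\cdot,T)$, where $z=u^{(1)}-u^{(2)}$ solves \eqref{eqn:ibvp_z} and $w=z_t$. Hence everything reduces to bounding $\|w(\cdot,T)\|_{\dot{H}^1}$ by a multiple of $|||f_1-f_2|||=\|(f_1-f_2)(g)\|_{\dot{H}^1}$ whose constant carries the exponential factor $e^{-(\hat\lambda_1-\hat c_3)T/2}$ with $\hat c_3=4\hat c_1/\hat\lambda_1$.

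First I would record the exponential decay of $u^{(2)}_t$: using $f_2'\leq0$ and the maximum-principle comparison argument of \eqref{eqn:vdecay}--\eqref{eqn:vdecayLinfty}, one gets $\|u^{(2)}_t(\cdot,t)\|_{L^\infty(\Omega)}^2\leq C_{\dot{H}^\mu,L^\infty}^\Omega\, e^{-\lambda_1 t}\|(\mathbb{L}u_0+f_2(u_0))^2\|_{\dot{H}^\mu}$, where it is crucial both that the comparison survives at the level of the Lebesgue norm of $(u^{(2)}_t)^2$ and that $\mu>\tfrac12$ so that $\dot{H}^\mu\hookrightarrow L^\infty$ in one space dimension. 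Next I would differentiate \eqref{eqn:ibvp_z} in time so that $w$ solves a linear parabolic equation with reaction potential $f_1'(Pu^{(1)})$, source $\tilde y\,u^{(2)}_t$ and initial value $(f_1-f_2)(u_0)$; shift $\mathbb{L}$ by the time-independent $\hat q\geq0$ of \eqref{eqn:c1hat} so that the perturbed potential has $L^\infty$-norm $\leq\hat c_1$ with $4\hat c_1<\hat\lambda_1^2$; pass to the eigensystem of $\mathbb{L}_{\hat q}$; write the Duhamel representation \eqref{eqn:id_w}; apply Cauchy--Schwarz inside each temporal convolution together with $\int_0^t e^{-\hat\lambda_j(t-s)}\,ds\leq\hat\lambda_j^{-1}$ and $e^{-\hat\lambda_j(t-s)}\leq e^{-\hat\lambda_1(t-s)}$; and sum in $j$. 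Taking $r=1$ — so that the residual self-term is $\leq\hat c_1\|w\|_{L^2}^2\leq(\hat c_1/\hat\lambda_1)\|w\|_{\dot{H}^1}^2$ and no space derivative falls on $\tilde y\,u^{(2)}_t$ — this yields the integral inequality \eqref{eqn:intineqeta} for $\eta(t)=\|w(t)\|_{\dot{H}^1}^2+\tfrac{4}{\hat c_3}\|\tilde y\,u^{(2)}_t\|_{L^2}^2$, to which the convolution Gronwall inequality Lemma \oldref{lem:Gronwall_conv} applies and gives \eqref{eqn:est_w}.

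Then I would run the same manipulation directly on \eqref{eqn:ibvp_z} (no differentiation, $\tilde r=0$), obtaining $\|z(\cdot,t)\|_{L^2}^2\leq 4\int_0^t e^{-(\check\lambda_1-\check c_3)(t-s)}\|(f_1-f_2)(u^{(2)}(s))\|_{\dot{H}^{-1}}^2\,ds$, hence controlling $\|z\|_{L^\infty(0,T;L^2)}$ by $\|(f_1-f_2)(u^{(2)})\|_{L^1(0,T;\dot{H}^{-1})}$, and I would bound the source by $\|\tilde y(t)u^{(2)}_t(t)\|_{L^2}\leq\bigl(\|f_1''\|_{L^\infty(I)}\|z(t)\|_{L^2}+\|(f_1-f_2)'(u^{(2)})(t)\|_{L^2}\bigr)\|u^{(2)}_t(t)\|_{L^\infty}$, inserting the two preceding facts. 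Finally I would substitute all of this into \eqref{eqn:est_w} at $r=1$, $t=T$: the comparison estimates \eqref{eqn:ass_norm} — valid because $g$ is strictly monotone with $|\nabla g|$ bounded above and below (so $g^{-1}\in W^{1,\infty}$), $u_0\in W^{1,\infty}(\Omega)$, and $\|\nabla u^{(2)}\|_{L^\infty(0,T;L^\infty)}$ is bounded by Theorem \oldref{thm:fixed_point_Schauder} — convert every occurrence of $f_1-f_2$ into $\|(f_1-f_2)(g)\|_{\dot{H}^1}$, while the leftover time integrals $\int_0^T e^{-(\hat\lambda_1-\hat c_3)(T-s)}e^{-\lambda_1 s}\,ds$ and $\int_0^T e^{-(\hat\lambda_1-\hat c_3)(T-s)}e^{-2\hat\lambda_1 s}\,ds$ are each bounded by a constant times $e^{-(\hat\lambda_1-\hat c_3)T}$; collecting constants into $C$ with the advertised dependence and taking square roots gives the claim.

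The main obstacle is the derivative accounting in the middle steps. Because the exponential decay of $u^{(2)}_t$ is available only in Lebesgue norms — the maximum-principle argument breaks down for higher-order Sobolev norms — I cannot afford even one spatial derivative on the inhomogeneity $\tilde y\,u^{(2)}_t$, which forces $r=1$ in the $w$-estimate and hence forces the difference $f_1-f_2$ to be measured in the weak norm $|||f_1-f_2|||=\|(f_1-f_2)(g)\|_{\dot{H}^1}$ rather than in $W^{1,\infty}(I)$. The delicate point is then to verify that this weak norm is nonetheless strong enough to dominate, via \eqref{eqn:ass_norm}, all three ways $f_1-f_2$ enters — as a $\dot{H}^1$ initial datum, as a $\dot{H}^{-1}$ forcing through $z$, and as an $L^2$-valued derivative inside $\tilde y$ — which is exactly why $u_0$ and $g$ must be taken in $W^{1,\infty}$ and why the bound on $\|\nabla u^{(2)}\|_{L^\infty}$ from Theorem \oldref{thm:fixed_point_Schauder} is needed. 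Finally, the smallness hypothesis $4\hat c_1<\hat\lambda_1^2$ is precisely what makes $\hat\lambda_1-\hat c_3=\hat\lambda_1-4\hat c_1/\hat\lambda_1>0$, so that the factor $e^{-(\hat\lambda_1-4\hat c_1/\hat\lambda_1)T/2}$ is genuinely contractive for $T$ large and, in particular, forces uniqueness of the fixed point within $W^{2,\infty}_-(I)$.
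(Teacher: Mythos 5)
Your proposal is correct and follows essentially the same route as the paper: exponential decay of $(u^{(2)}_t)^2$ via the maximum-principle comparison, time-differentiation of the equation for $z=u^{(1)}-u^{(2)}$, the shift by $\hat{q}$ and passage to the eigensystem of $\mathbb{L}_{\hat q}$, the convolution Gronwall inequality with $\hat c_3=4\hat c_1/\hat\lambda_1$, the forced choice $r=1$ and the resulting weak norm $\|(f_1-f_2)(g)\|_{\dH{1}}$ bridged by \eqref{eqn:ass_norm}. The accounting of where each occurrence of $f_1-f_2$ lands and why $4\hat c_1<\hat\lambda_1^2$ yields a genuinely decaying factor matches the paper's argument.
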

% \Margin{Ref.1, Minor 10}
% 
As a consequence of Theorem \oldref{thm:contraction} we have the following conditional uniqueness result 
\begin{corollary}\label{cor:conditionaluniqueness}
Let the assumptions of Theorem \oldref{thm:contraction} be satisfied.\\
There is a $T^\star>0$ such that if $T>T^\star$, then there exists at most one fixed point of $\mathbb{T}$ in $W^{2,\infty}_-(I)$, thus a solution of the inverse problem \eqref{eqn:direct_prob_fractional}, \eqref{eqn:initial_bdry_conds}, \eqref{eqn:overposed_data} is unique in $W^{2,\infty}_-(I)$. 
\end{corollary}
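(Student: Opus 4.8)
The plan is to obtain Corollary~\oldref{cor:conditionaluniqueness} as an immediate quantitative consequence of Theorem~\oldref{thm:contraction}, the whole point being that the contraction factor there tends to zero as $T\to\infty$. First I would note that the standing hypothesis $4\hat c_1<\hat\lambda_1^2$ of \eqref{eqn:c1hat}, inherited through the assumptions of Theorem~\oldref{thm:contraction}, is precisely the statement that the exponential rate $\hat\lambda_1-4\hat c_1/\hat\lambda_1$ is strictly positive. Hence, abbreviating by $\kappa(T):=C\,e^{-(\hat\lambda_1-4\hat c_1/\hat\lambda_1)T/2}$ the factor produced by Theorem~\oldref{thm:contraction} --- with $C$ depending only on $\|f_1''\|_{L^\infty(I)}$, $\|(\mathbb{L}u_0+f(u_0))^2\|_{\dH{\mu}}$ and the constants $\bar C_{-1}(g),\bar C_0(g),\bar C_1(g)$ of \eqref{eqn:ass_norm} --- the map $T\mapsto\kappa(T)$ is strictly decreasing with $\kappa(T)\to0$, so there is a threshold $T^\star:=\max\{0,\ 2(\ln C)/(\hat\lambda_1-4\hat c_1/\hat\lambda_1)\}$ beyond which $\kappa(T)<1$.

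Next comes the elementary uniqueness step. Fix $T>T^\star$ and let $f_1,f_2\in W^{2,\infty}_-(I)$ both be fixed points of $\mathbb{T}$. Since $\mathbb{T}(f_i)=f_i$, Theorem~\oldref{thm:contraction} gives $|||f_1-f_2|||=|||\mathbb{T}(f_1)-\mathbb{T}(f_2)|||\le\kappa(T)\,|||f_1-f_2|||$ with $\kappa(T)<1$; as this is merely an inequality between two given elements, no completeness of $(W^{2,\infty}_-(I),|||\cdot|||)$ is needed, and we conclude $|||f_1-f_2|||=\|(f_1-f_2)(g)\|_{\dH{1}}=0$. Because $\Omega\subseteq\mathbb{R}^1$ and $g$ is strictly monotone with $g(\Omega)=I=[g_{\min},g_{\max}]$ by \eqref{eqn:ass_g}, the map $f\mapsto f\circ g$ from $C(I)$ to $C(\Omega)$ is injective, so $(f_1-f_2)\circ g=0$ forces $f_1=f_2$ on $I$: at most one fixed point exists. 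The assertion for the inverse problem then follows since, by the construction of $\mathbb{T}=\mathbb{P}\circ\mathbb{S}$ together with Remark~\oldref{rem:P}, any $f\in W^{2,\infty}_-(I)$ that solves \eqref{eqn:direct_prob_fractional}, \eqref{eqn:initial_bdry_conds}, \eqref{eqn:overposed_data} in the admissible class (so that the attainability condition \eqref{eqn:attainability} holds) satisfies $u(\cdot,T;f)=g$; hence under the range condition \eqref{eqn:range_condition} the projection $P$ is inactive up to time $T$, evaluating the equation at $t=T$ yields $\mathbb{S}f=f(g)$, and the uniqueness clause of Remark~\oldref{rem:P} identifies $\mathbb{P}(\mathbb{S}f)$ with $f$, so $f$ is a fixed point of $\mathbb{T}$ and uniqueness transfers.

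The single point that deserves care --- and the main obstacle --- is that $C$, hence $T^\star$, depends on $\|f_1''\|_{L^\infty(I)}$, a quantity not controlled by membership in the set $B$ of Theorem~\oldref{thm:fixed_point_Schauder} (which bounds only the $W^{1,\infty}(I)$ norm). To get one $T^\star$ valid for the whole admissible class one fixes an a~priori bound $\|f''\|_{L^\infty(I)}\le M$ and reads $T^\star=T^\star(M)$; otherwise $T^\star$ is allowed to depend on the pair of candidate solutions being compared. Under either reading the argument above is unchanged, and the remainder is routine bookkeeping.
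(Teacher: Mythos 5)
Your argument is precisely the intended one: the paper states this corollary as an immediate consequence of Theorem \oldref{thm:contraction} and leaves to the reader exactly the steps you supply --- positivity of the decay rate from $4\hat{c}_1<\hat{\lambda}_1^2$, choice of $T^\star$ so that the contraction factor drops below one, the one-line conclusion $|||f_1-f_2|||=0$ for two fixed points, injectivity of $f\mapsto f\circ g$ from strict monotonicity of $g$, and the identification of solutions of the inverse problem with fixed points via Remark \oldref{rem:P}. Your caveat that $T^\star$ depends on $\|f_1''\|_{L^\infty(I)}$, which is not uniformly bounded over $W^{2,\infty}_-(I)$, is a genuine subtlety the paper glosses over, and your resolution (an a priori bound $M$ or a pairwise $T^\star$) is the correct reading.
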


\subsection{Some thoughts and comments}
It is instructive to compare the inverse problem for 
\eqref{eqn:direct_prob_parabolic} with the initial-boundary conditions
\eqref{eqn:initial_bdry_conds} studied in this paper with earlier work in
the references 
\cite{PilantRundell:1986, PilantRundell:1987, PilantRundell:1988} for similar
equations and initial-boundary conditions.
Both approaches used an iterative method obtained by projecting the equation
onto a boundary where overposed data is available.
In the current case this represents a spatial measurement of $u$
at a final time $t=T$, $u(x,T)$ for $x\in\Omega$,
and in the former a time-measurement $u(x_0,t)$ for $x_0\in\partial\Omega$.
In each case the method relies on a mapping of $f$ to the differential
operator projected onto the overposed boundary.
In the current case this is $f \to u_t(x,T)$ and in the previous
it is (in one space dimension) $f \to u_{xx}(x_0,t)$.
If one sets the problem in H\"older spaces $C^{k,\beta}$ then for
$f \in C^{k_0,\beta}$ the solution
$u \in C^{k_0+2,\beta}(\Omega)\times C^{k_0+1,\beta/2}(0,T)$
Thus we have $u_{xx}(x_0,t)\in C^{k_0,\beta}$ and there is a
relatively straightforward self mapping property.
However, $u_t(x,T) \in C^{k_0,\beta/2}$ causing difficulties due to the
lack of regularity pickup as noted in 
the previous two subsections.
%section~\ref{sect:iteration}.

\subsection{Data smoothing and noise propagation} \label{sec:noisydata}
Typically, we will be given noisy data $g^\delta$ and sometimes also some information on the statistics of the noise or, in the deterministic setting considered here, on the noise level with respect to the $L^2$ norm
\begin{equation}\label{eqn:delta}
\|g^\delta-g\|_{L^2(\Omega)}\leq \delta\,.
\end{equation}
Application of the iterative scheme requires sufficiently smooth data, so instead of $g^\delta$ we employ a smoothed version $\tilde{g}^\delta$ satisfying the conditions on $g$ of Theorems \oldref{thm:fixed_point_Schauder}, \oldref{thm:contraction} as well as 
\begin{equation}\label{eqn:tildedelta}
\|\tilde{g}^\delta-g\|_{\dH{m}}\leq \tilde{\delta}
\end{equation}
for some $m>0$, where $\tilde{\delta}\to0$ as $\delta\to0$, cf., e.g., \cite[Section 6.1]{KaltenbacherRundell:2019a}.

In the situation of Theorem \oldref{thm:fixed_point_Schauder} with $m=\qq$, we can make use of uniform boundedness by $\rho$ of the fixed point $f^{\tilde{g}^\delta}$ of the operator $\mathbb{T}_{\tilde{g}^\delta}$ corresponding to data $\tilde{g}^\delta$ as $\tilde{\delta\to0}$, as well as weak* continuity of $\mathbb{T}$ to prove subsequential weak* convergence in $X$ of $f^{\tilde{g}^\delta}$ to $f_{act}$ as $\delta\to0$, where  $f^{\tilde{g}^\delta}$ is the fixed point corresponding to data $\tilde{g}^\delta$. Namely, uniform boundedness of $(\|f^{\tilde{g}^\delta}\|_X)_{\delta\in(0,\bar{\delta}]}=(\|\mathbb{T}_{\tilde{g}^\delta}(f^{\tilde{g}^\delta})\|_X)_{\delta\in(0,\bar{\delta}]}$ implies existence of weakly* in $W^{1,\infty}(I)$ and strongly in $L^\infty(I)$ convergent subsequences $\tilde{f}_k:=f^{\tilde{g}^\delta_k}$, $\mathbb{T}_{\tilde{g}^\delta_k}(f^{\tilde{g}^\delta_k})$ with limits $\bar{f}$ and $\hat{f}$, respectively. It remains to prove that $\bar{f}=\mathbb{T}_g (\bar{f})$, which means that $\bar{f}$ is a solution to the inverse problem with exact data $g$, provided it satisfies the range condition \eqref{eqn:range_condition}. To this end, we abbreviate $\tilde{f}_k:=f^{\tilde{g}^\delta_k}$ and $\tilde{g}_k:=\tilde{g}^\delta_k$ and consider the following decomposition
\begin{equation}\label{eqn:decomp}
\begin{aligned}
\mathbb{T}_g (\bar{f})-\bar{f}=&
\mathbb{T}_g (\bar{f})-\mathbb{T}_g (\tilde{f}_k) 
+\mathbb{T}_g (\tilde{f}_k)-\mathbb{T}_{\tilde{g}_k} (\tilde{f}_k)
+ \tilde{f}_k-\bar{f}\\
=&\mathbb{T}_g (\bar{f})-\mathbb{T}_g (\tilde{f}_k) 
+ \mathbb{P}_g(y_k) - \mathbb{P}_g(\tilde{y}_k)
+ \mathbb{P}_g(\tilde{y}_k) - \mathbb{P}_{\tilde{g}_k}(\tilde{y}_k)
+ \tilde{f}_k-\bar{f}\,,
\end{aligned}
\end{equation}
where $y_k=D_t^\alpha u(\cdot,T;\tilde{f}_k) - \mathbb{L}g -r(\cdot,T)$, $\tilde{y}_k=D_t^\alpha u(\cdot,T;\tilde{f}_k) - \mathbb{L}\tilde{g}_k -r(\cdot,T)$, \\
$\mathbb{P}_g y\in\mbox{argmin}\{\|f^\sharp(g)-y\|_Z\, : \, f^\sharp\in X \mbox{ and }\|f^\sharp(u_0)+r(\cdot,0)\|_{\dH \qq}\leq\rho_0\}$, and we have used the fact that $\mathbb{T}_{\tilde{g}_k} (\tilde{f}_k)=\mathbb{P}_{\tilde{g}_k}(\tilde{y}_k)=\tilde{f}_k$.
The first and last difference $\mathbb{T}_g (\bar{f})-\mathbb{T}_g (\tilde{f}_k)$ and $\tilde{f}_k-\bar{f}$ tend to zero weakly* in $X$, due to the weak* continuity of $\mathbb{T}_g$.
For the second and third differences, we abbreviate the projections defined by \eqref{eqn:defP} by  
$f_k^+=\mathbb{P}_g(y_k)$,
$\tilde{f}_k^+=\mathbb{P}_g(\tilde{y}_k)$, and 
$\tilde{\tilde{f}}_k^+=\mathbb{P}_{\tilde{g}_k}(\tilde{y}_k)$, 
i.e., $f_k^+(g)=y_k$ and 
$\tilde{f}_k^+(g)=\tilde{y}_k=\tilde{\tilde{f}}_k^+(\tilde{g}_k)$. 
Hence, the second difference in \eqref{eqn:decomp} satisfies
\[
\begin{aligned}
\|\mathbb{P}_g(y_k) - \mathbb{P}_g(\tilde{y}_k)\|_X
=&\|f_k^+ - \tilde{f}_k^+\|_X
\leq C(g) \|f_k^+(g) - \tilde{f}_k^+(g)\|_Z
= C(g) \|y_k - \tilde{y}_k\|_Z\\
=& C(g) \|\mathbb{L}(g-\tilde{g}_k)\|_Z
\leq C(g) C_{\dot{H}^\qq\to W^{1,\infty}}^\Omega\tilde{\delta}\,,
\end{aligned}
\]
and the third difference can be estimated by 
\[
\begin{aligned}
&\|\mathbb{P}_g(\tilde{y}_k) - \mathbb{P}_{\tilde{g}_k}(\tilde{y}_k)\|_{L^\infty(I)}
=\|\tilde{f}_k^+ - \tilde{\tilde{f}}_k^+\|_{L^\infty(I)}
\leq \|\tilde{f}_k^+(g) - \tilde{\tilde{f}}_k^+(g)\|_{L^\infty(\Omega)}\\
&\qquad=\|\tilde{\tilde{f}}_k^+(\tilde{g}_k) - \tilde{\tilde{f}}_k^+(g)\|_{L^\infty(\Omega)}
=\|\int_0^1\tilde{\tilde{f}}_k^{+\,'}(g+\theta(\tilde{g}_k-g))\, d\theta (\tilde{g}_k-g)\|_{L^\infty(\Omega)}\\
&\qquad\leq\|\tilde{\tilde{f}}_k^{+\,'}\|_{L^\infty(I)} \|(\tilde{g}_k-g)\|_{L^\infty(\Omega)}
\leq \rho C_{\dot{H}^\qq, L^\infty}^\Omega \tilde{\delta}\,.
\end{aligned}
\]
Altogether, the right hand side of \eqref{eqn:decomp} tends to zero as $k\to\infty$. A subsequence-subsequence argument therefore yields stability.
\begin{proposition}
Let $(g^\delta)_{\delta\in(0,\bar{\delta}]}\subseteq L^2(\Omega)$ be a family of data satisfying \eqref{eqn:delta} with corresponding smoothed versions $\tilde{g}^\delta$ satisfying \eqref{eqn:ass_g} and \eqref{eqn:tildedelta} with $m=\qq$ such that $\tilde{\delta}\to0$ as $\delta\to0$ and $\qq>\frac32$. 
%Let moreover the assumptions of Theorem \oldref{thm:fixed_point_Schauder} be satisfied for all $\tilde{g}^\delta$.
\\
Then the sequence of fixed points $f^{\tilde{g}^\delta}$ of the perturbed operators $\mathbb{T}_{\tilde{g}^\delta}$ converges weakly* subsequentially in $X$ to the set of fixed points of the unperturbed operator $\mathbb{T}=\mathbb{T}_g$ as $\delta\to0$.\footnote{i.e., every subsequence has an $X$ weakly* convergent subsequence and the limit of every $X$ weakly* convergent subsequence is a fixed point of $\mathbb{T}_g$. If the fixed point of $\mathbb{T}_g$ is unique, than the whole family $f^{\tilde{g}^\delta}$ converges weakly* in $X$ to this fixed point as $\delta\to0$.}  
\end{proposition}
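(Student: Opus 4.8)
The plan is to combine a uniform a~priori bound on the perturbed fixed points with weak* compactness in $X$, the compact embedding $X\to L^\infty(I)$, the weak* continuity of $\mathbb{T}_g$ (Theorem~\oldref{thm:fixed_point_Schauder}), and the decomposition \eqref{eqn:decomp}, into the usual stability-under-perturbation argument.

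First I would verify that, for $\delta$ sufficiently small, Theorem~\oldref{thm:fixed_point_Schauder} produces the fixed point $f^{\tilde g^\delta}$ of $\mathbb{T}_{\tilde g^\delta}$ with structural constants independent of $\delta$. Since $\tilde g^\delta\to g$ in $\dH\qq$ by \eqref{eqn:tildedelta} with $m=\qq$, the norms $\|\mathbb{L}\tilde g^\delta\|_Z$ stay bounded and the bounds $0<\underline{g}\leq|\nabla\tilde g^\delta|\leq\overline{g}$ — hence the constants $c(\tilde g^\delta),C(\tilde g^\delta)$ of \eqref{eqn:cond_g} — may be chosen uniform in $\delta$. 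Consequently a single radius $\rho$, together with fixed smallness requirements on $\rho_0$ and on $r_t$, works for all small $\delta$, so that $f^{\tilde g^\delta}=\mathbb{T}_{\tilde g^\delta}(f^{\tilde g^\delta})\in B$ with $\|f^{\tilde g^\delta}\|_X\leq\rho+\rho_0$ uniformly. (A technical point to be addressed: the interval $[\tilde g^\delta_{\min},\tilde g^\delta_{\max}]$ varies with $\delta$; one works on a fixed slightly larger interval, or extends each $f^{\tilde g^\delta}$ with controlled Lipschitz constant, so that all these functions lie in one space $X$.)

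Next, given any subsequence of $(f^{\tilde g^\delta})_\delta$, since $X$ is the dual of a separable space I extract a further subsequence $f_k:=f^{\tilde g^{\delta_k}}$ with $f_k\stackrel{*}{\rightharpoonup}\bar f$ in $X$ for some $\bar f\in B$; compactness of the embedding $X\to L^\infty(I)$ (where boundedness of $I$ is crucial) then yields $f_k\to\bar f$ in $L^\infty(I)$, and combined with weak* continuity of $\mathbb{T}_g$ also $\mathbb{T}_g(f_k)\to\mathbb{T}_g(\bar f)$ in $L^\infty(I)$. It remains to identify $\bar f$ as a fixed point of $\mathbb{T}_g$. For each $k$ the ($k$-independent) element $\mathbb{T}_g(\bar f)-\bar f$ is written, via \eqref{eqn:decomp}, as the sum of $\mathbb{T}_g(\bar f)-\mathbb{T}_g(f_k)$, $\mathbb{P}_g(y_k)-\mathbb{P}_g(\tilde y_k)$, $\mathbb{P}_g(\tilde y_k)-\mathbb{P}_{\tilde g_k}(\tilde y_k)$ and $f_k-\bar f$. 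Estimating in $L^\infty(I)$: the first and last terms tend to $0$ by the previous step; the second is $O(\tilde\delta_k)$ by \eqref{eqn:cond_g}, since $y_k-\tilde y_k=\mathbb{L}(\tilde g_k-g)$ and \eqref{eqn:tildedelta} holds; the third is $O(\rho\,\tilde\delta_k)$ by the mean value theorem applied in the $g$-variable, using $\tilde f_k^+(g)=\tilde{\tilde f}_k^+(\tilde g_k)$ and \eqref{eqn:tildedelta}. As $\tilde\delta_k\to0$ this forces $\|\mathbb{T}_g(\bar f)-\bar f\|_{L^\infty(I)}=0$, hence (both sides having continuous representatives) $\mathbb{T}_g(\bar f)=\bar f$ in $X$; in particular, if in addition $\bar f$ satisfies \eqref{eqn:range_condition}, it solves the inverse problem with exact data $g$. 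A subsequence--subsequence argument then gives the stated subsequential weak* convergence of the whole family $(f^{\tilde g^\delta})_\delta$ to the set of fixed points of $\mathbb{T}_g$; if that fixed point is unique — for instance under the hypotheses of Corollary~\oldref{cor:conditionaluniqueness} — the full family converges weak* in $X$ to it.

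I expect the main obstacle to be not the passage to the limit — which is bookkeeping once \eqref{eqn:decomp} is in hand — but securing the uniformity of the first step: one must design the smoothing $g^\delta\mapsto\tilde g^\delta$ so that \eqref{eqn:ass_g}, \eqref{eqn:XZ} and the smallness/monotonicity hypotheses of Theorem~\oldref{thm:fixed_point_Schauder} all hold with $\delta$-independent constants while $\tilde\delta\to0$. This is precisely where a quantitative data-smoothing construction such as that in \cite[Section~6.1]{KaltenbacherRundell:2019a} enters.
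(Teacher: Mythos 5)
Your proposal is correct and follows essentially the same route as the paper: uniform boundedness of the fixed points in $B$, weak* extraction plus the compact embedding $X\to L^\infty(I)$, the decomposition \eqref{eqn:decomp} with the two projection differences estimated by $O(\tilde\delta)$ via \eqref{eqn:cond_g} and the mean value theorem, and a subsequence--subsequence argument. Your explicit attention to the $\delta$-uniformity of the constants in Theorem \oldref{thm:fixed_point_Schauder} and to the $\delta$-dependence of the interval $I$ addresses points the paper leaves implicit, but does not change the argument.
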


Under the conditions of Theorem \oldref{thm:contraction} with $T$ sufficiently large so that $C e^{-\hat{\lambda}_1T/2}=:c<1$ and $m=3$, we can even provide a quantitative stability estimate as follows.
\[
\begin{aligned}
&\tilde{\delta}\geq \|\mathbb{L}\tilde{g}^\delta-\mathbb{L}g\|_{\dH{1}}
=\|D_t^\alpha u(\cdot,T;f^{\tilde{g}^\delta})-D_t^\alpha u(\cdot,T;f^g)+f^{\tilde{g}^\delta}(\tilde{g}^\delta)-f_{act}(g)\|_{\dH{1}}\\
&\geq \|f^{\tilde{g}^\delta}(\tilde{g}^\delta)-f_{act}(g)\|_{\dH{1}}
-\|D_t^\alpha u(\cdot,T;f^{\tilde{g}^\delta})-D_t^\alpha u(\cdot,T;f_{act})\|_{\dH{1}}\\
&\geq (1-c) \|f^{\tilde{g}^\delta}(\tilde{g}^\delta)-f_{act}(\tilde{g}^\delta)\|_{\dH{1}}
-\|f_{act}(\tilde{g}^\delta)-f_{act}(g)\|_{\dH{1}}\\
&\geq (1-c) \|f^{\tilde{g}^\delta}(\tilde{g}^\delta)-f_{act}(\tilde{g}^\delta)\|_{\dH{1}}\\
&\quad-\Bigl(\|f_{act}'\|_{L^\infty(I)} 
+\|f_{act}''\|_{L^\infty(I)} \|\nabla(\tilde{g}^\delta- g)\|_{L^\infty(\Omega)}\Bigr)\|\tilde{g}^\delta- g\|_{L^2(\Omega)}
\end{aligned}
\]
hence 
\[
\|f^{\tilde{g}^\delta}(\tilde{g}^\delta)-f_{act}(\tilde{g}^\delta)\|_{\dH{1}}\leq \frac{1}{1-c}\Bigl(
\tilde{\delta}+(\|f_{act}'\|_{L^\infty(I)}+\|f_{act}''\|_{L^\infty(I)}C_{H^\qq\to W^{1,\infty}}^\Omega \tilde{\delta})\delta\Bigr)\,.
\]
 
\def\gdag{g}

\section{Newton type methods}\label{sec:Newton_methods}
\subsection{Operator formulation of nonlinearity identification in reaction-diffusion equation}
Denote by $u_{act}(x,t)=u(x,t;f_{act})$ the exact solution of the inverse problem  \eqref{eqn:direct_prob_fractional}, \eqref{eqn:initial_bdry_conds}, \eqref{eqn:overposed_data}.

Again, we assume the range condition \eqref{eqn:range_condition} to hold for $f=f_{act}$ and use the projection $P$ in the reformulation \eqref{eqn:u} of the direct problem.
Moreover, as in the previous section, we use $Y=L^2(\Omega)$ as data space and a continuously embedded subspace $X$ of $W^{1,\infty}(I)$ as preimage space, e.g., just $X=W^{1,\infty}(I)$, or, in order to work with a Hilbert space setting, $X=H^\qq(I)$ with $\qq>\frac32$) for $F:X\to Y$, $f\mapsto u(\cdot, T)$ where $u$ solves
\begin{equation}\label{eqn:F}
\begin{aligned}
D_t^\alpha u - \mathbb{L} u &= f(Pu) + r,\quad (x,t)\in\Omega\times(0,T) \\
\partial_\nu u(x,t) +\gamma(x) u(x,t)&= 0 \quad (x,t)\in\partial\Omega\times(0,T) \\
u(x,0) &= u_0 \quad x\in\Omega \,.
\end{aligned}
\end{equation}
The operator $F$ is well-defined due to Theorem \oldref{th:semilin} (replacing $f$ there by $f\circ P$ here).

This allows to write the inverse problem as 
\begin{equation}\label{eqn:Ffg}
F(f)=g
\end{equation}
We denote the exact solution by $f_{act}$ and some initial guess by $f_0$.

The derivative of the forward operator is given by $F'(f)h=v(\cdot,T)$ where $v$ solves
\begin{equation}\label{eqn:Fprime}
\begin{aligned}
D_t^\alpha v - \mathbb{L} v - f'(Pu) Pv &= h(Pu),\quad (x,t)\in\Omega\times(0,T) \\
\partial_\nu v(x,t) +\gamma(x) v(x,t)&= 0 \quad (x,t)\in\partial\Omega\times(0,T) \\
v(x,0) &= 0 \quad x\in\Omega \,.
\end{aligned}
\end{equation}
Well-definedness of $v$ can be concluded from results in \cite{SakamotoYamamoto:2011a}, which has to be applied together with some fixed point argument, since the potential $f'(Pu)$ depends on $x$ and $t$.
Its adjoint (as appearing in the explicit version of Tikhonov regularized Newton or in source conditions) can be derived by means of the usual integration by parts procedure using the co-area formula. However, this leads to quite
opaque formulas and hence we avoid it by stating Newton's method and source conditions in a variational manner below. 

Further regularity of $F$, such as Lipschitz continuity at $f_{act}$, can be verified by using the fact that $F'(f_{act})-F'(f)=w(\cdot,T)$ where $w$ solves 
\begin{equation}\label{eqn:Fpp}
\begin{aligned}
D_t^\alpha w - \mathbb{L} w - f_{act}'(Pu_{act}) Pw &= rhs,\quad (x,t)\in\Omega\times(0,T) \\
\partial_\nu w(x,t) +\gamma(x) w(x,t)&= 0 \quad (x,t)\in\partial\Omega\times(0,T) \\
w(x,0) &= 0 \quad x\in\Omega \,,
\end{aligned}
\end{equation}
where 
\begin{equation}\label{eqn:rhs0}
rhs=(f_{act}'(Pu_{act})-f_{act}'(Pu))\,Pv  \,+ (f_{act}-f)'(Pu) \,Pv \,+ h(Pu_{act})-h(Pu)\,,
\end{equation}
$u$, $v$ solve \eqref{eqn:F}, \eqref{eqn:Fprime}, and $u_{act}$ solves \eqref{eqn:F}, \eqref{eqn:Fprime} with $f$ replaced by $f_{act}$, respectively, see Subsection \oldref{vscLipschitz_reaction-diffusion} below.
%Again, we can apply existing theory from \cite{SakamotoYamamoto11} (plus a fixed point argument, to account for the space and time dependent potential) to \eqref{eqn:Fpp}. Anyway, since only $\|w(\cdot,T)\|_{L^2(\Omega)}$ is required it might be more efficient to employ the  solution formula from \cite{SakamotoYamamoto11} and do estimates directly based on it - similarly for $v$ above.

\subsection{Formulation of Newton type methods}
For simplicity we again just use a deterministic noise bound 
\begin{equation}\label{eqn:delta_abstract}
\|g^\delta-g\|_Y\leq \delta\,,
\end{equation}
while the noise itself might of course be random.

Consider two versions of regularized Newton iterations, cf., e.g., \cite{Baku92,Hohage97,Jin:2010,KNS08,KaltenbacherPreviatti18,SchusterKaltenbacherHofmannKazimierski:2012,Werner15} and the references therein

{\em Tikhonov regularized Newton:}
\begin{equation}\label{eqn:NewtonTikhonov}
f_{k+1} \mbox{ minimizer of } \|F'(f_k)(f-f_k)+F(f_k)-g^\delta\|_Y^2 +\gamma_k\|f-f_0\|_X^2 
\end{equation}
with a priori choice of the regularization parameter $\gamma_k=\gamma_0 \kappa^k$ for some $\kappa\in(0,1)$.

{\em Ivanov regularized Newton:}
\begin{equation}\label{eqn:NewtonIvanov}
f_{k+1} \mbox{ minimizer of } \|F'(f_k)(f-f_k)+F(f_k)-g^\delta\|_Y \mbox{ under the constraint }\|f-f_0\|_X\leq \varrho
\end{equation}
with $\varrho$ chosen so that the exact solution is admissible, i.e., $\|f_{act}-f_0\|\leq \varrho$.

In order to guarantee existence of minimizers of \eqref{eqn:NewtonTikhonov} or \eqref{eqn:NewtonIvanov} respectively, it is crucial to use a space $X$ that guarantees weak(*) compactness of sublevel sets as well as weak(*) lower semicontinuity of the cost functional (and constraint).
Therefore a choice $X=H^\qq(I)$ or $X=W^{1,\infty}(I)$ is admissible, while $X=C^1(I)$ would not satisfy these requirements.

Both versions of Newton's method are stopped according to the discrepancy principle:
\begin{equation}\label{eqn:discrprinc}
k_*(\delta)=\min\{k \,|\, \|F(f_k)-g^\delta\|>\tau\delta\}
\end{equation}
for some $\tau>1$ (independent of $\delta$).

The advantage of \eqref{eqn:NewtonIvanov} as compared to \eqref{eqn:NewtonTikhonov} is that it always guarantees a bounded sequence of iterates, no matter whether further conditions guaranteeing convergence are satisfied, see Section \oldref{sec:conv_Newton} below.
Its drawback is that an upper bound of the distance between $f_0$ and $f_{act}$ should be known in order to have a reasonable choice of $\varrho$. 
%(it is also possible to choose it without this knowledge, by another discrepancy principle, but then the convergence analysis becomes involved again)
If for all $\delta$ sufficiently small the sequence reaches the stopping index defined by the discrepancy principle, then weak compactness of $B_\varrho(f_0)$ also guarantees weak subsequential convergence of $f_{k_*(\delta)}$ to a solution $f_{act}$, provided $F$ is weakly continuous. 
%(more precisely, weakly sequentially closed).
%However, in order to guarantee that the stopping index $k_*(\delta)$ is indeed finite, further assumptions are needed.  

\subsection{Convergence of Newton type methods under a Lipschitz condition at $f_{act}$ and a variational source condition} \label{sec:conv_Newton}
Structural constraints on the forward operator, such as the tangential cone condition, are often not satisfied for parameter identification problems in PDEs unless complete observations of the state (i.e., in our case observations of $u$ on all of $\Omega\times(0,T)$) are available.
So they are likely not satisfied for the inverse problem under consideration here.

In contrast to this, a Lipschitz condition on $F$ 
\begin{equation}\label{eqn:Lipschitz}
\|F'(f)-F'(f_{act})\|_{L(X,Y)}\leq L\|f-f_{act}\|_X \mbox{ for } f\in B_\varrho(f_0)
\end{equation}
can usually be verified in a much more standard way. Note that one of the two points where $F'$ is evaluated is fixed to $f_{act}$, which enables to establish \eqref{eqn:Lipschitz} without the need for imposing a stronger space $X$ (which would make the problem more ill-posed) but just assuming additional regularity on $f_{act}$, see Section \oldref{vscLipschitz_reaction-diffusion}.

A convergence proof of Newton type methods without structural condition (so with just the Lipschitz condition on $F'$) requires $f_{act}$ to satisfy a source condition. The explicit version of the source condition would involve an adjoint, which we avoid by considering the variational version (cf., e.g., \cite{HKPS07})
\begin{equation}\label{eqn:vsc}
-(h,f_{act}-f_0)_X \leq \beta \|F'(f_{act})h\|_Y \quad \mbox{ for all } h \mbox{ such that }h+f_{act} \in B_\varrho(f_0)
\end{equation}
for some $\beta<\frac{1}{L}$. Note that \eqref{eqn:vsc} implies that $f_{act}-f_0$ lies in the orthogonal complement of the kernel of $F'(f_{act})$ so its
verification is related to the uniqueness question for the linearized problem. 
We here restrict ourselves to the case of a Hilbert space $X$ with inner product $(\cdot,\cdot)_X$ for simplicity. The considerations below can be extended to a Banach space setting by using Bregman distances, (cf., e.g., \cite{HKPS07}).

\medskip
 
Convergence of \eqref{eqn:NewtonTikhonov} under conditions \eqref{eqn:Lipschitz} and \eqref{eqn:vsc} so that the stopping index defined by the discrepancy principle \eqref{eqn:discrprinc} is reached after a finite number of steps, follows from existing results (e.g., \cite[Lemma 4.1]{KNS08}, see also \cite{Baku92} for the Hilbert space setting and \cite{SchusterKaltenbacherHofmannKazimierski:2012,Werner15} in Banach spaces).

\medskip

In case of \eqref{eqn:NewtonIvanov}, we can argue as follows, assuming that $\varrho$ has been chosen such that $\|f_{act}-f_0\|^2\leq \varrho\leq \|f_{act}-f_0\|^2+C\delta$ and omitting the norm subscripts to simplify the notation.

Minimality of $f_{k+1}$ and admissibility of $f_{act}$ implies
\begin{equation}\label{eqn:minimality}
\|F'(f_k)(f_{k+1}-f_k)+F(f_k)-g^\delta\|\leq \|F'(f_k)(f_{act}-f_k)+F(f_k)-g^\delta\|\,,
\end{equation}
where the left and right hand sides can further be estimated by means of the Lipschitz condition on $F'$
\[
\begin{aligned}
F_p(k) &:= \|F'(f_k)(f_{k+1}-f_k)+F(f_k)-g^\delta\|
\geq\|F'(f_k)(f_{k+1}-f_k)+F(f_k)-F(f_{act})\|-\delta\\
&\geq\|F(f_{k+1})-F(f_{act})\|-\|F'(f_k)(f_{k+1}-f_k)+F(f_k)-F(f_{k+1})\|-\delta\\
&\geq\|F(f_{k+1})-F(f_{act})\| -\|\Bigl((F'(f_k)-F'(f_{act}))\\
&\qquad +\int_0^1(F'(f_{act})-F'(f_k+\vartheta(f_{k+1}-f_k)))\, d\vartheta\Bigr) \,(f_{k+1}-f_k)\|-\delta\\
&\geq\|F(f_{k+1})-F(f_{act})\|
-L\Bigl(\|f_k-f_{act}\|\\
&\qquad+\int_0^1\|(1-\vartheta)(f_{act}-f_k)+\vartheta(f_{act}-f_{k+1})\|\Bigr)
\|f_{k+1}-f_k\|-\delta\\
&\geq\|F(f_{k+1})-F(f_{act})\|\\
&\qquad-L\Bigl(\tfrac32\|f_k-f_{act}\|+\tfrac12\|f_{k+1}-f_{act}\|\Bigr)
\Bigl(\|f_{k+1}-f_{act}\|+\|f_k-f_{act}\|\Bigr)-\delta\\
&\geq\|F(f_{k+1})-F(f_{act})\|
-\tfrac{5L}{2}\|f_k-f_{act}\|^2-\tfrac{3L}{2}\|f_{k+1}-f_{act}\|^2-\delta
\end{aligned}
\]
and 
\[
\begin{aligned}
F_{pa}(k) &:=
\|F'(f_k)(f_{act}-f_k)+F(f_k)-g^\delta\|\leq \|F'(f_k)(f_{act}-f_k)+F(f_k)-F(f_{act})\|+\delta\\
&\leq \|\bigl((F'(f_k)\!-\!F'(f_{act}))+\int_0^1\!(F'(f_{act})\!-\!F'(f_{act}+\vartheta(f_k\!-\!f_{act})))\, d\vartheta\bigr)(f_{act}\!-\!f_k)\|+\delta\\ 
&\leq L\Bigl(\|f_k-f_{act}\|+\int_0^1 \vartheta\|f_k-f_{act}\|\, d\vartheta\Bigr)\|f_{act}-f_k\|+\delta
\leq \tfrac{3L}{2}\|f_k-f_{act}\|^2+\delta,
\end{aligned}
\]
so inserting $F_p(k)$ and $F_{pa}(k)$ into \eqref{eqn:minimality} we get
\[
\|F(f_{k+1})-F(f_{act})\|\leq 2\delta +4L\|f_k-f_{act}\|^2+\tfrac{3L}{2}\|f_{k+1}-f_{act}\|^2\,.
\]
Here the quadratic terms can be estimated by means of the variational source condition and the fact that
$\|f_\ell-f_0\|^2\leq \varrho\leq \|f_0-f_{act}\|^2+C\delta$
\[
\begin{aligned}
\tfrac12\|f_\ell-f_{act}\|^2
&=\tfrac12\|f_\ell-f_0\|^2-\tfrac12\|f_0-f_{act}\|^2-(f_\ell-f_{act},f_{act}-f_0)\\
&\leq\tfrac{C}{2}\delta+\beta\|F'(f_{act})(f_\ell-f_{act})\|\\
&\leq\tfrac{C}{2}\delta+\beta\Bigl(\|F(f_\ell)-F(f_{act})\|+\tfrac{L}{2}\|f_\ell-f_{act}\|^2\Bigr)\,,
\end{aligned}
\]
where we have again used the Lipschitz condition in the last estimate, hence 
\begin{equation}\label{eqn:fellfact}
\tfrac{1-\beta L}{2}\|f_\ell-f_{act}\|^2\leq \tfrac{C}{2}\delta+\beta\|F(f_\ell)-F(f_{act})\|\,.
\end{equation}
Using this for $\ell=k$ and $\ell=k+1$ altogether yields 
\[
%\begin{aligned}
\|F(f_{k+1})-F(f_{act})\|
\leq \tfrac{2-2\beta L+11CL}{1-\beta\delta} \delta + \tfrac{8\beta L}{1-\beta\delta} \|F(f_k)-F(f_{act})\|
+ \tfrac{8\beta L}{1-\beta\delta} \|F(f_{k+1})-F(f_{act})\|\,,
%\end{aligned}
\]
i.e., an estimate of the form
\[
\|F(f_{k+1})-F(f_{act})\|\leq m \|F(f_k)-F(f_{act})\|+\bar{C}\delta
\]
with $m=\frac{8\beta L}{1-4\beta\delta}$ smaller than one if $\beta$ is sufficiently small, so 
\[
\|F(f_k)-g^\delta\|\leq\|F(f_k)-F(f_{act})\|+\delta\leq m^k \|F(f_0)-F(f_{act})\|+\left(\tfrac{\bar{C}}{1-m}+1\right)\delta \leq \tau\delta
\]
for 
\[
k\geq \frac{1}{\log\left(\tfrac{1}{m}\right)}\Bigl(\log\left(\tfrac{1}{\delta}\right) + \log (\|F(f_0)-F(f_{act})\|)-\log\left(\tau-1-\tfrac{\bar{C}}{1-m}\right) \Bigr)
\]
which proves that $k_*(\delta)$ is finite provided $\tau$ has been chosen sufficiently large, $\tau>1+\tfrac{\bar{C}}{1-m}$.

Moreover, \eqref{eqn:discrprinc} together with \eqref{eqn:fellfact} for $\ell=k_*(\delta)$ yields the convergence rate 
\begin{equation}\label{eqn:rate}
\|f_{k_*(\delta)}-f_{act}\|^2\leq \tfrac{1}{1-\beta L}\Bigl(C\delta+2\beta\|F(f_{k_*(\delta)})-F(f_{act})\|\Bigr) \leq \tfrac{C+2\beta(\tau+1)}{1-\beta L}\delta
\end{equation}

\begin{proposition}\label{prop:convNewton}
Let $Y$ be a Banach space, $X$ a Hilbert space, and $(g^\delta)_{\delta\in(0,\bar{\delta}]}\subseteq Y$ a family of data satisfying \eqref{eqn:delta_abstract} and let, for each $\delta\in(0,\bar{\delta}]$, the regularized approximation $f_{k_*(\delta)}$ be defined by an early stopped Newton type iteration according to either \eqref{eqn:NewtonTikhonov}, \eqref{eqn:discrprinc} or \eqref{eqn:NewtonIvanov}, \eqref{eqn:discrprinc}, with $\tau$ sufficiently large and for some differentiable forward operator $F:X\to Y$ satisfying \eqref{eqn:Lipschitz}, and some initial guess satisfying \eqref{eqn:vsc}.

Then for each $\delta\in(0,\bar{\delta}]$, the stopping index is finite and the regularized approximations converge to the exact solution $f_{act}$ of $F(f)=g$ as appearing in  \eqref{eqn:vsc} at the rate \eqref{eqn:rate}, i.e.,
$\|f_{k_*(\delta)}-f_{act}\|=O(\sqrt{\delta})$, $\|F(f_{k_*(\delta)})-g\|=O(\delta)$.
\end{proposition}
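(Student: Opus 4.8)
Most of the analysis is already displayed in the computations preceding the statement, so the plan is to assemble those pieces into the two cases and to add the (routine) well-posedness of each Newton step. First I would record that the iterates are well-defined: at step $k$ the functional in \eqref{eqn:NewtonTikhonov} (resp. \eqref{eqn:NewtonIvanov}) is to be minimized over $X$ (resp. over $B_\varrho(f_0)\subseteq X$), and since $X=H^\qq(I)$ with $\qq>\frac32$ (or $X=W^{1,\infty}(I)$) has weakly (weakly*) precompact bounded sets while $f\mapsto\|F'(f_k)(f-f_k)+F(f_k)-g^\delta\|_Y$ is weakly (weakly*) lower semicontinuous because $F'(f_k)\in L(X,Y)$, the direct method produces a minimizer; this is exactly the point flagged after \eqref{eqn:NewtonIvanov}, where $C^1(I)$ is excluded.

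For the Tikhonov version \eqref{eqn:NewtonTikhonov} stopped by the discrepancy principle \eqref{eqn:discrprinc}, the hypotheses \eqref{eqn:Lipschitz} and \eqref{eqn:vsc} with $\beta<1/L$ are precisely those under which finiteness of $k_*(\delta)$ and the rates $\|f_{k_*(\delta)}-f_{act}\|=O(\sqrt\delta)$, $\|F(f_{k_*(\delta)})-g\|=O(\delta)$ are established in the cited literature, so here I would simply invoke \cite[Lemma 4.1]{KNS08} (see also \cite{Baku92} for the Hilbert space case and \cite{SchusterKaltenbacherHofmannKazimierski:2012,Werner15} for Banach spaces); no new argument is needed.

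For the Ivanov version \eqref{eqn:NewtonIvanov} I would present the computation carried out above as the body of the proof: minimality of $f_{k+1}$ together with admissibility of $f_{act}$ (guaranteed by the a priori choice $\|f_{act}-f_0\|^2\le\varrho$) gives \eqref{eqn:minimality}; bounding $F_p(k)$ from below and $F_{pa}(k)$ from above by Taylor expansion and repeated use of \eqref{eqn:Lipschitz} yields $\|F(f_{k+1})-F(f_{act})\|\le 2\delta+4L\|f_k-f_{act}\|^2+\tfrac{3L}{2}\|f_{k+1}-f_{act}\|^2$; the variational source condition \eqref{eqn:vsc}, combined with $\|f_\ell-f_0\|^2\le\varrho\le\|f_0-f_{act}\|^2+C\delta$, converts the quadratic terms into \eqref{eqn:fellfact}, and substituting \eqref{eqn:fellfact} for $\ell=k$ and $\ell=k+1$ produces a linear recursion $\|F(f_{k+1})-F(f_{act})\|\le m\|F(f_k)-F(f_{act})\|+\bar C\delta$ with $m$ a multiple of $\beta L$. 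For $\beta$ small enough that $m<1$, telescoping bounds $\|F(f_k)-g^\delta\|$ and shows $k_*(\delta)<\infty$ once $\tau>1+\bar C/(1-m)$; finally \eqref{eqn:fellfact} at $\ell=k_*(\delta)$ together with $\|F(f_{k_*(\delta)})-F(f_{act})\|\le(\tau+1)\delta$ delivers the rate \eqref{eqn:rate}.

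The main obstacle is closing the recursion: both $\|F(f_{k+1})-F(f_{act})\|$ and, through the source condition, $\|f_{k+1}-f_{act}\|^2$ reappear on the right-hand side, so one must check that the product $\beta L$ of the source-condition and Lipschitz constants is small enough both for the $f_{k+1}$-terms to be absorbed and for $m<1$, and one must verify the admissibility window $\|f_{act}-f_0\|^2\le\varrho\le\|f_0-f_{act}\|^2+C\delta$ can be met simultaneously. Convergence to $f_{act}$ specifically (rather than to some other solution) is then forced by \eqref{eqn:vsc}, which selects the solution in the orthogonal complement of $\ker F'(f_{act})$ — consistent with the fact that global uniqueness for the inverse problem is only known conditionally, cf. Corollary \oldref{cor:conditionaluniqueness}.
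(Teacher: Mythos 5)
Your proposal is correct and follows essentially the same route as the paper: citing \cite[Lemma 4.1]{KNS08} and related literature for the Tikhonov variant, and for the Ivanov variant reproducing the chain minimality/admissibility $\Rightarrow$ \eqref{eqn:minimality}, the Lipschitz-based bounds on $F_p(k)$ and $F_{pa}(k)$, the conversion of the quadratic terms via \eqref{eqn:vsc} into \eqref{eqn:fellfact}, the resulting linear recursion with contraction factor $m$ a multiple of $\beta L$, finiteness of $k_*(\delta)$ for $\tau$ large, and the rate \eqref{eqn:rate} from \eqref{eqn:fellfact} at $\ell=k_*(\delta)$. The remarks on well-posedness of each Newton step via weak(*) compactness and lower semicontinuity likewise match the paper's discussion preceding the proposition.
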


\subsection{Back to the reaction-diffusion equation}\label{vscLipschitz_reaction-diffusion}

We now verify the assumptions of Proposition \oldref{prop:convNewton} for the forward operator $F$ defined by \eqref{eqn:F}.

\subsubsection{Lipschitz continuity of $F'$}
Since the right hand side of \eqref{eqn:Fpp}, cf. \eqref{eqn:rhs0}, can be rewritten as 
\begin{equation}\label{eqn:rhs}
\begin{aligned}
rhs=
&\int_0^1 f_{act}''(P(u_{act}+\vartheta(u-u_{act})))\, d\vartheta \, P(u_{act}-u)\, Pv\\ 
&+ \int_0^1 h'(P(u_{act}+\vartheta(u-u_{act})))\, d\vartheta \Bigr) \, P(u_{act}-u)
+ (f_{act}-f)'(Pu) \, Pv
\end{aligned}
\end{equation}
we can remain with the assumption $X\subseteq W^{1,\infty}(I)$ on the preimage space, while assuming twice differentiability only on $f_{act}$.
Assuming that $f'(Pu_{act})P$ and $f'(Pu)P$ are not too far away from an only space dependent function, so that there exists $-q(x)\in \mathcal{D}(F):=\Bigl\{\tilde{q}\in L^2(\Omega)\, : \, \exists \bar{q}>0\, : \  \|\tilde{q}-\bar{q}\|_{L^2(\Omega)}<\mu\Bigr\}
=\bigcup_{\bar{q}>0} \mathcal{B}^{L^2}_{\mu}(\bar{q})$ (cf. \cite{KaltenbacherRundell:2019b})
with 
\begin{equation}\label{eqn:c1}
\|f_{act}'(P(u_{act}+\vartheta(u-u_{act})))P+q\|_{L^\infty(0,T\times\Omega)}\leq c_1, \quad 
\|f'(Pu)P+q\|_{L^\infty(0,T\times\Omega)} \leq c_1
\end{equation}
for all $\vartheta\in[0,1]$ and some $c_1$ sufficiently small, we can rewrite \eqref{eqn:Fpp}, \eqref{eqn:Fprime}, and the equation for the difference $\hat{u}:=u_{act} -u$  as 
\begin{equation*}
\begin{aligned}
D_t^\alpha w - \mathbb{L} w +q(x) w &= (f_{act}'(Pu_{act})P+q)w + rhs,\quad (x,t)\in\Omega\times(0,T) \\
%\partial_\nu w(x,t) +\gamma(x) w(x,t)&= 0 \quad (x,t)\in\partial\Omega\times(0,T) \\
%w(x,0) &= 0 \quad x\in\Omega \,,
\end{aligned}
\end{equation*}
\begin{equation*}
\begin{aligned}
D_t^\alpha v - \mathbb{L} v + q(x) v &= (f'(Pu)P+q)v + h(Pu),\quad (x,t)\in\Omega\times(0,T) \\
%\partial_\nu v(x,t) +\gamma(x) v(x,t)&= 0 \quad (x,t)\in\partial\Omega\times(0,T) \\
%v(x,0) &= 0 \quad x\in\Omega \,,
\end{aligned}
\end{equation*}
and 
\begin{equation*}
\begin{aligned}
D_t^\alpha \hat{u} - \mathbb{L} \hat{u} +q(x) \hat{u} &= (\overline{f'}+q) P\hat{u} + (f_{act}-f)(Pu),\quad (x,t)\in\Omega\times(0,T) \\
%\partial_\nu \hat{u}(x,t) +\gamma(x) \hat{u}(x,t)&= 0 \quad (x,t)\in\partial\Omega\times(0,T) \\
%\hat{u}(x,0) &= 0 \quad x\in\Omega \,,
\end{aligned}
\end{equation*}
where these functions are
equipped with homogeneous initial and impedance boundary conditions, 
where $\overline{f'}=\int_0^1f_{act}'(P(u_{act}+\vartheta(u-u_{act})))P\, d\vartheta$.
Note that \eqref{eqn:c1} holds for all $f\in B_\varrho(f_0)$ provided 
\begin{equation}\label{eqn:c0}
\|f_{act}'(Pu_{act})P+q\|_{L^\infty(0,T\times\Omega)}\leq c_0\,, \quad\|f_0'(Pu(\cot,\cdot,f_0)P+q\|_{L^\infty(0,T\times\Omega)}\leq c_0
\end{equation}
and $\varrho$ is sufficiently small.
Using eigenvalues and eigenfunctions of $-\mathbb{L}_q=-\mathbb{L}+q(x)$ we apply the representation formula \eqref{eqn:sol_representation}.
Together with \eqref{eqn:Young_convolution} with $p=\infty$, $q=r=2$ and \eqref{eqn:estintEalphaalpha} for $\theta\in (0,2-1/\alpha)$, this yields, with $p^*=\frac{p}{p-1}$, $p^{**}=\frac{2p}{p-2}$, 
\[
\begin{aligned}
\|w\|_{(C((0,T);L^2(\Omega))}\leq&
\tilde{C}_\alpha \Bigl(\|(f_{act}'(Pu_{act})P+q)w\|_{L^2(0,T;\dot{H}^{-\theta}(\Omega))} + \|rhs\|_{L^2(0,T;\dot{H}^{-\theta}(\Omega))}\Bigr)\\
\leq& \tilde{C}_\alpha C_{\dot{H}^{\theta}\to L^p}^\Omega \Bigl(\|f_{act}'(Pu_{act})P+q\|_{L^2(0,T;L^{p^{**}}(\Omega))} \|w\|_{C((0,T);L^2(\Omega))}\\
&\qquad+\|f_{act}''\|_{L^\infty(I)} \|Pv\, P\hat{u}\|_{L^2(0,T;L^{p^*}(\Omega))} + \|h'\|_{L^\infty(I)} \|P \hat{u}\|_{L^2(0,T;L^{p^*}(\Omega))}\\
&\qquad+\|(f_{act}-f)'\|_{L^\infty(I)} \|Pv\|_{L^2(0,T;L^{p^*}(\Omega))}\Bigr)\,.
\end{aligned}
\]

For  $\|f_{act}'(Pu_{act})P+q\|_{L^2(0,T;L^{p^{**}}(\Omega))}$
sufficiently small, this gives an estimate on the term $\|w\|_{C((0,T);L^2(\Omega))}$. Similarly, we get estimates on $\|\hat{u}\|_{L^2(0,T;L^{p^*}(\Omega))}$ $\|v\|_{L^2(0,T;L^{p^*}(\Omega))}$ appearing in the right hand side, using 
\eqref{eqn:Young_convolution} with $p=r=2$, $q=1$ and  $\int_0^t |\tau^{\alpha-1}E_{\alpha,\alpha}(-\lambda \tau^\alpha)|\,dt\leq\frac{1}{\lambda}$,
\[
\begin{aligned}
\|P\hat{u}\|_{L^2(0,T;L^r(\Omega)}
&\leq \|\hat{u}\|_{L^2(0,T;L^r(\Omega))}
\leq C_{\dot{H}^2\to L^r}^\Omega \|\hat{u}\|_{L^2(0,T;\dH{2})}\\
&\leq C_{\dot{H}^2\to L^r}^\Omega \|(\overline{f'}+q) P\hat{u} + (f_{act}-f)(Pu)\|_{L^2(0,T;L^2(\Omega))}\\
&\leq C_{\dot{H}^2\to L^r}^\Omega \Bigl(\|(\overline{f'}+q)\|_{L^\infty(I)} \|P\hat{u}\|_{L^2(0,T;L^2(\Omega))} 
+ \|f_{act}-f\|_{L^\infty((0,T)\times\Omega))}\Bigr)
\end{aligned}
\]
and likewise 
\[
\|Pv\|_{L^2(0,T;L^r(\Omega))}
\leq C_{\dot{H}^2\to L^r}^\Omega \|(f'(Pu)+q)\|_{L^\infty((0,T)\times\Omega)} \|Pv\|_{L^2(0,T;L^2(\Omega))} 
+ \|h\|_{L^\infty(I)}\,.
\]

\subsubsection{Variational source condition}
Recall that $F'(f_{act})h=v(\cdot,T)=:j$ where $v$ solves \eqref{eqn:Fprime}. Thus, using the operators
$\mathbb{A}$, $\mathbb{B}$, $\mathbb{C}$, defined by 
$\mathbb{A}w=-\mathbb{L} w-f_{act}'(g)w$ with with impedance boundary conditions and $g(x)=u_{act}(x,T)$, $\mathbb{B}h:=h(g)$, and $\mathbb{C}h=D_t^\alpha z(\cdot,T)$, where $z$ solves
\[
D_t^\alpha z -\mathbb{L}z-f_{act}'(Pu_{act})Pz=h(Pu_{act}) \, \qquad z(0)=0\,,
\]
we can write
\[
j=\mathbb{A}^{-1}(\mathbb{B}-\mathbb{C})h\,.
\]
Here we assume $q:=-f_{act}'(g)$ to be nonnegative and contained in $L^\infty(\Omega)$ so that $\mathbb{A}$ is an isomorphism between $\dot{H}^2_q(\Omega)$ and $L^2(\Omega)$, where $\dot{H}^2_q(\Omega)$ is defined analogously to $\dH{s}$ cf. \eqref{eqn:Hdot} with $\mathbb{L}_q:=\mathbb{L}-q$ in place of $\mathbb{L}$.

If $\mathbb{C}$ can be shown to be a contraction in the sense that
\begin{equation}\label{eqn:vsc-contr0}
\|\mathbb{C}h\|_{V(\Omega)}\leq \kappa \|\mathbb{B}h\|_{V(\Omega)} \mbox{ and } 
\|h\|_{W(I)} \leq C_1 \|\mathbb{B}h\|_{V(\Omega)}\mbox{ and }\|y\|_{V(\Omega)}\leq C_2\|\mathbb{A}^{-1}y\|_{L^2(\Omega)}
\end{equation}
%or
%\begin{equation}\label{eqn:vsc-contr}
%\|\mathbb{A}^{-1}\mathbb{C}h\|_{L^2(\Omega)}\leq \kappa \|\mathbb{A}^{-1}\mathbb{B}h\|_{L^2(\Omega)} \mbox{ and } 
%\|h\|_{W(I)} \leq C \|\mathbb{A}^{-1}\mathbb{B}h\|_{L^2(\Omega)}
%\end{equation}
then \eqref{eqn:vsc} holds with 
\[
\beta = \frac{C_1 C_2}{1-\kappa}\|f_{act}-f_0\|_{W(I)^*}\,.
\]
The spaces $V(\Omega)$ and $W(I)$ need to be appropriately chosen for this purpose. In particular we can choose the norms on $W(I)$ and $V(\Omega)$ weak in order to enable all these estimates to hold. 
(The weaker we choose $W(I)$, the stronger we have to pay for it by the smoothness condition of $\|f_{act}-f_0\|_{W(I)^*}$ being finite and small enough.)

Using a system of eigenvalues and eigenfunctions $\lambda_j^q$, $\varphi_j^q$ of $-\mathbb{L}+q$ we get 
\[
  \begin{aligned}
z(\cdot,t)&=
\sum_{j=1}^\infty \Bigl\{
\int_0^t s^{\alpha-1} E_{\alpha,\alpha}(-\lambda_j^q s^\alpha) 
\Bigl((f_{act}'(Pu_{act})+q)P z + h(Pu_{act}))(t-s),\varphi_j^q)\, ds\Bigr\} \varphi_j^q
  \end{aligned}
\]
hence 
\revision{
assuming $P(0)=0$, i.e., $g_{\min}\leq 0 \leq g_{\max}$ 
}
% \Margin{Ref.1, Minor 11}
\[
  \begin{aligned}
&z_t(\cdot,t)=
\sum_{j=1}^\infty \Bigl\{
t^{\alpha-1} E_{\alpha,\alpha}(-\lambda_j^q t^\alpha) (h(Pu_0),\varphi_j^q)\\
&\quad+\int_0^t s^{\alpha-1} E_{\alpha,\alpha}(-\lambda_j^q s^\alpha) 
\Bigl(f_{act}''(Pu_{act}) P u_{act\,,t} \, P z + (f_{act}'(Pu_{act})+q) P z_t\\
&\hspace*{7cm}+ h'(Pu_{act})P u_{act\,,t})(t-s),\varphi_j^q\Bigr)\, ds\Bigr\} \varphi_j^q\,.
  \end{aligned}
\]
We therefore get, analogously to \eqref{eqn:estut} and estimating 
\[
\begin{aligned}
&\|(f_{act}''(Pu_{act}) P u_{act\,,t} \, P z\|_{L^{Q^*}(0,t;L^2(\Omega))}
=\|(f_{act}''(Pu_{act}) P u_{act\,,t} \, P \int_0^\cdot z_t(s)\, ds\|_{L^{Q^*}(0,t;L^2(\Omega))}\\
&\leq
\|(f_{act}''(Pu_{act}) P u_{act\,,t}\|_{L^{Q^*}(0,t;L^{p^{**}}(\Omega))} C_{\dot{H}^q\to L^p} t^{1/Q} \| z_t\|_{L^{Q^*}(0,t;\dot{H}^\qq_q(\Omega))}
\end{aligned}
\]
that 
\[
  \begin{aligned}
&    \|z_t(t)\|_{\dot{H}^\qq_q(\Omega)} 
\leq t^{\alpha-1} \|h(u_0)\|_{\dot{H}^\qq_q(\Omega)}+\|h'(Pu_{act})P u_{act\,,t}\|_{L^{Q^*}(0,t;L^2(\Omega))}\\
&+ \tilde{C}_\alpha^{2/Q^*}\Bigl(\frac{1}{\lambda_1^q}\|f_{act}'(u_{act})+q\|_{L^\infty(0,T;L^\infty(\Omega))}+
\|(f_{act}''(Pu_{act}) P u_{act\,,t}\|_{L^{Q^*}(0,t;L^{p^{**}}(\Omega))} t^{1/Q}
\Bigr)\|z_t\|_{L^{Q^*}(0,t;\dot{H}^\qq_q(\Omega))}
  \end{aligned}
\]
for $Q\leq \frac{2(2-\theta)}{2-2\theta+\qq}$.
Gronwall's inequality \eqref{eqn:Gronwall2} then yields
\[
\begin{aligned}
    \|z_t(T)\|_{\dot{H}^\qq_q(\Omega)} &\leq 
2^{1-1/Q^*}\Bigl(a(T)+2^{Q^*-1}c_2 \Bigl(\int_0^T a(s)^{Q^*}e^{2^{Q^*-1} c_2^{Q^*}(T-s)}\, ds\Bigl)^{1/Q^*} \Bigr)
\end{aligned}
\]
where 
\[
\begin{aligned}
a(t)&= 
t^{\alpha-1}\|h(u_0)\|_{\dot{H}^\qq_q(\Omega)} 
+\|h'(Pu_{act})P u_{act\,,t}\|_{L^{Q^*}(0,t;L^2(\Omega))}\\
&\leq 
t^{\alpha-1}\|h(u_0)\|_{\dot{H}^\qq_q(\Omega)} 
+\|h'\|_{L^\infty(I)}\|P u_{act\,,t}\|_{L^{Q^*}(0,t;L^2(\Omega))}
\end{aligned}
\]
and
\[
c_2=\tilde{C}_\alpha^{2/Q^*}\Bigl(\frac{1}{\lambda_1^q}\|f_{act}'(u_{act})+q\|_{L^\infty(0,T;L^\infty(\Omega))}+
\|(f_{act}''(Pu_{act}) P u_{act\,,t}\|_{L^{Q^*}(0,T;L^{p^{**}}(\Omega))} T^{1/Q}
\Bigr)\,.
\]
Assuming 
\begin{equation}\label{eqn:c3}
\|h(u_0)\|_{\dot{H}^\qq_q(\Omega)}\leq c_3 \|h(\gdag)\|_{\dot{H}^\qq_q(\Omega)}
\end{equation}
for all $h\in X$ and  
$c_2$ or $c_3$ as well as $\|P u_{act\,,t}\|_{L^{Q^*}(0,t;L^2(\Omega))}$
sufficiently small so that
\begin{equation}\label{eqn:c1c2small}
\begin{aligned}
&2^{1-1/Q^*}\Bigl(T^{\alpha-1}+2^{Q^*-1}c_2 \Bigl(\int_0^T s^{Q^*(\alpha-1)}e^{2^{Q^*-1} c_2^{Q^*}(T-s)}\, ds\Bigl)^{1/Q^*} \Bigr) c_3\\
&\quad
+2^{1-1/Q^*}\Bigl(1+2^{Q^*-1}c_2 \Bigl(\int_0^T e^{2^{Q^*-1} c_2^{Q^*}(T-s)}\, ds\Bigl)^{1/Q^*} \Bigr) 
C(g) \|P u_{act\,,t}\|_{L^{Q^*}(0,t;L^2(\Omega))}\\
&=:\kappa  <1\,,
\end{aligned}
\end{equation}
we obtain contractivity \eqref{eqn:vsc-contr0} with $V(\Omega)=\dot{H}^\qq_q(\Omega)$.

\medskip

From Proposition \oldref{prop:convNewton} we can conclude the following convergence result.
\begin{corollary}
Assume that $\Omega\subseteq\mathbb{R}^1$,  $\alpha\in(\frac45,1]$, $f_{act}'\leq0$, 
\revision{
$g_{\min}\leq 0 \leq g_{\max}$, 
}
% \Margin{Ref.1, Minor 11}
$\|f_{act}-f_0\|\leq \varrho$ sufficiently small and that \eqref{eqn:c0}, \eqref{eqn:c3} holds with $c_2$ or $c_3$ and $\|P u_{act\,,t}\|_{L^{Q^*}(0,t;L^2(\Omega))}$ sufficiently small so that \eqref{eqn:c1c2small} is satisfied.
Let $F:X=\dot{H}^\qq_q(\Omega)\to Y=L^2(\Omega)$ with $\qq>\frac32$ and $q$ as in \eqref{eqn:c0} be defined by $F(f)=u(\cdot, T)$ where $u$ solves \eqref{eqn:F}, and let  
$(g^\delta)_{\delta\in(0,\bar{\delta}]}\subseteq Y$ be a family of data satisfying \eqref{eqn:delta_abstract} and let, for each $\delta\in(0,\bar{\delta}]$, the regularized approximation $f_{k_*(\delta)}$ be defined by an early stopped Newton type iteration according to either \eqref{eqn:NewtonTikhonov}, \eqref{eqn:discrprinc} or \eqref{eqn:NewtonIvanov}, \eqref{eqn:discrprinc}, with $\tau$ sufficiently large.

Then for each $\delta\in(0,\bar{\delta}]$, the stopping index is finite and the regularized approximations converge to the exact solution $f_{act}$ of $F(f)=g$ as appearing in  \eqref{eqn:vsc} at the rate \eqref{eqn:rate}, i.e.,
$\|f_{k_*(\delta)}-f_{act}\|=O(\sqrt{\delta})$, $\|F(f_{k_*(\delta)})-g\|=O(\delta)$.
\end{corollary}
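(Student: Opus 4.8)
The plan is to verify the two hypotheses of Proposition~\oldref{prop:convNewton} for the forward operator $F$ defined by \eqref{eqn:F}, namely the Lipschitz condition \eqref{eqn:Lipschitz} on $F'$ at $f_{act}$ and the variational source condition \eqref{eqn:vsc}, and then invoke that proposition verbatim. Well-definedness and Fréchet differentiability of $F$ on the Hilbert space $X=\dot{H}^\qq_q(\Omega)$ with $\qq>\tfrac32$ follow from Theorem~\oldref{th:semilin} (applied with $f\circ P$ in place of $f$), from the linear theory of \cite{SakamotoYamamoto:2011a} together with a fixed point argument for the space- and time-dependent potential $f'(Pu)$, and the derivative is given by \eqref{eqn:Fprime}, while the second-order difference $w=\bigl(F'(f_{act})-F'(f)\bigr)h$ solves \eqref{eqn:Fpp} with right-hand side in the form \eqref{eqn:rhs}.

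Next I would establish \eqref{eqn:Lipschitz}. Under \eqref{eqn:c0}---hence \eqref{eqn:c1} for every $f\in B_\varrho(f_0)$ once $\varrho$ is small enough---I rewrite the equations for $w$, $v$ and $\hat{u}=u_{act}-u$ with the shifted elliptic operator $-\mathbb{L}+q$, expand in the eigenbasis of $-\mathbb{L}_q$, and apply the representation formula \eqref{eqn:sol_representation} together with Young's convolution inequality \eqref{eqn:Young_convolution} and the Mittag-Leffler estimate \eqref{eqn:estintEalphaalpha} for some $\theta\in(0,2-1/\alpha)$; the restriction $\alpha>\tfrac45$ makes this window nonempty and, exactly as in the self-mapping argument, lets me pick $Q$ so that no space derivatives land on the nonlinear factors. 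Absorbing the (small) $\|f_{act}'(Pu_{act})P+q\|$-term on the left, and controlling $\|\hat{u}\|$ and $\|v\|$ in $L^2(0,T;L^r(\Omega))$ by the smoothing bound $\dot{H}^2\hookrightarrow L^r$ and $\int_0^t|\tau^{\alpha-1}E_{\alpha,\alpha}(-\lambda\tau^\alpha)|\,d\tau\le 1/\lambda$, yields $\|w(\cdot,T)\|_{L^2(\Omega)}\le L\|f_{act}-f\|_X$ with $L$ governed by $\|f_{act}''\|_{L^\infty(I)}$, $\|h'\|_{L^\infty(I)}$ and the constants above, which is precisely \eqref{eqn:Lipschitz}.

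Then I would verify the variational source condition by the factorization $F'(f_{act})h=\mathbb{A}^{-1}(\mathbb{B}-\mathbb{C})h$, where $\mathbb{A}\colon\dot{H}^2_q(\Omega)\to L^2(\Omega)$ is an isomorphism since $q=-f_{act}'(g)\ge0$ lies in $L^\infty(\Omega)$. The heart of the matter is the contraction estimate \eqref{eqn:vsc-contr0} for $\mathbb{C}$: differentiating the Duhamel identity for $z$ in time and estimating $z_t$ in $\dot{H}^\qq_q(\Omega)$ exactly as in \eqref{eqn:estut}---again with \eqref{eqn:Young_convolution}, \eqref{eqn:estintEalphaalpha} and $Q\le\frac{2(2-\theta)}{2-2\theta+\qq}$---and then applying Gronwall's inequality \eqref{eqn:Gronwall2}, one bounds $\|z_t(T)\|_{\dot{H}^\qq_q(\Omega)}$ by an increasing function of $\|h(u_0)\|_{\dot{H}^\qq_q(\Omega)}$ and $\|Pu_{act,t}\|_{L^{Q^*}(0,T;L^2(\Omega))}$. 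Using \eqref{eqn:c3} to replace $\|h(u_0)\|$ by $\|h(\gdag)\|$ and the smallness condition \eqref{eqn:c1c2small} gives \eqref{eqn:vsc-contr0} with $V(\Omega)=\dot{H}^\qq_q(\Omega)$ and a sufficiently weak $W(I)$, hence \eqref{eqn:vsc} with $\beta=\tfrac{C_1C_2}{1-\kappa}\|f_{act}-f_0\|_{W(I)^*}$; shrinking $\varrho$ once more (which simultaneously shrinks $L$ and $\|f_{act}-f_0\|_{W(I)^*}$) secures $\beta<\tfrac1L$. With \eqref{eqn:Lipschitz} and \eqref{eqn:vsc} in hand, Proposition~\oldref{prop:convNewton} gives finiteness of $k_*(\delta)$ and the rates $\|f_{k_*(\delta)}-f_{act}\|=O(\sqrt\delta)$, $\|F(f_{k_*(\delta)})-g\|=O(\delta)$, completing the proof.

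The step I expect to be the main obstacle is the coordinated choice of the auxiliary spaces $V(\Omega)$ and $W(I)$ together with $\varrho$: $W(I)$ must be weak enough that the contraction constant $\kappa$ in \eqref{eqn:vsc-contr0} is below one and that $\|h\|_{W(I)}\lesssim\|\mathbb{B}h\|_{V(\Omega)}$, yet strong enough that $\|f_{act}-f_0\|_{W(I)^*}$ is finite and can be made arbitrarily small; since this last requirement, the inequality $\beta L<1$, and the smallness demands \eqref{eqn:c0}, \eqref{eqn:c3}, \eqref{eqn:c1c2small} all compete for the same small parameters, the bookkeeping needed to satisfy them simultaneously is the delicate part---everything else is a rerun of the parabolic and fractional estimates already performed for the fixed point operator $\mathbb{T}$.
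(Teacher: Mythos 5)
Your proposal follows the paper's own route essentially verbatim: it verifies the Lipschitz condition \eqref{eqn:Lipschitz} via the shifted operator $-\mathbb{L}+q$, the representation formula, Young's convolution inequality and the Mittag-Leffler bound \eqref{eqn:estintEalphaalpha}, establishes the variational source condition through the factorization $F'(f_{act})h=\mathbb{A}^{-1}(\mathbb{B}-\mathbb{C})h$ with the contraction estimate \eqref{eqn:vsc-contr0} obtained from the $z_t$ bound and Gronwall, and then invokes Proposition \oldref{prop:convNewton}. This matches the argument of Subsections \oldref{vscLipschitz_reaction-diffusion} of the paper, including your correct identification of the coordinated smallness/space choices as the delicate point.
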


\section{Reconstructions}\label{sect:reconstructions}

%\input colordvi
%%%%%%%%%%%%%%%%%%%%%%%%%%%%%%%%%%%%%%%%%%%%%%%%%%%%%%%%%%%%%%%%%%%%%%%%%%%%
%
% This is the file fonts.tex
%
%%%%%%%%%%%%%%%%%%%%%%%%%%%%%%%%%%%%%%%%%%%%%%%%%%%%%%%%%%%%%%%%%%%%%%%%%%%%
%
\font\tenrm=cmr10
\font\teni=cmmi10 \skewchar\teni='177
\font\tensy=cmsy10 \skewchar\tensy='60
\font\tenex=cmex10
\font\tenit=cmti10
\font\tensl=cmsl10
\font\tenbf=cmbx10
\font\tentt=cmtt10
\font\ninerm=cmr9
\font\ninei=cmmi9 \skewchar\ninei='177
\font\ninesy=cmsy9 \skewchar\ninesy='60
\font\nineit=cmti9
\font\ninesl=cmsl9
\font\ninebf=cmbx9
\font\ninett=cmtt9
\font\eightrm=cmr8
\font\eighti=cmmi8 \skewchar\eighti='177
\font\eightsy=cmsy8 \skewchar\eightsy='60
\font\eightit=cmti8
\font\eightsl=cmsl8
\font\eightbf=cmbx8
\font\eighttt=cmtt8
\font\sevenrm=cmr7
\font\seveni=cmmi7 \skewchar\seveni='177
\font\sevensy=cmsy7 \skewchar\sevensy='60
\font\sevenbf=cmbx7
\font\sevenit=cmmi7
\font\sevensl=cmmi7
\font\seventt=cmr7
\font\sixrm=cmr6
\font\sixi=cmmi6 \skewchar\sixi='177
\font\sixsy=cmsy6 \skewchar\sixsy='60
\font\sixbf=cmbx6
\font\fiverm=cmr5
\font\fivei=cmmi5 \skewchar\fivei='177
\font\fivesy=cmsy5 \skewchar\fivesy='60
\font\fivebf=cmbx5
\def\tenpoint{\def\rm{\fam0\tenrm}%
        \textfont0=\tenrm \scriptfont0=\sevenrm \scriptscriptfont0=\fiverm
        \textfont1=\teni \scriptfont1=\seveni \scriptscriptfont1=\fivei
        \textfont2=\tensy \scriptfont2=\sevensy \scriptscriptfont2=\fivesy
        \textfont3=\tenex \scriptfont3=\tenex \scriptscriptfont3=\tenex
        \def\it{\fam\itfam\tenit}%
        \textfont\itfam=\tenit
        \def\sl{\fam\slfam\tensl}%
        \textfont\slfam=\tensl
        \def\bf{\fam\bffam\tenbf}%
        \textfont\bffam=\tenbf \scriptfont\bffam=\sevenbf
                \scriptscriptfont\bffam=\fivebf
        \def\tt{\fam\ttfam\tentt}%
        \textfont\ttfam=\tentt
        \normalbaselineskip=12pt%
        \let\sc=\eightrm        % Small caps
        \setbox\strutbox=\hbox{\vrule height8.5pt depth3.5pt width0pt}%
        \normalbaselines\rm}
\def\ninepoint{\def\rm{\fam0\ninerm}%
        \textfont0=\ninerm \scriptfont0=\sixrm \scriptscriptfont0=\fiverm
        \textfont1=\ninei \scriptfont1=\sixi \scriptscriptfont1=\fivei
        \textfont2=\ninesy \scriptfont2=\sixsy \scriptscriptfont2=\fivesy
        \textfont3=\tenex \scriptfont3=\tenex \scriptscriptfont3=\tenex
        \def\it{\fam\itfam\nineit}%
        \textfont\itfam=\nineit
        \def\sl{\fam\slfam\ninesl}%
        \textfont\slfam=\ninesl
        \def\bf{\fam\bffam\ninebf}%
        \textfont\bffam=\ninebf \scriptfont\bffam=\sixbf
                \scriptscriptfont\bffam=\fivebf
        \def\tt{\fam\ttfam\ninett}%
        \textfont\ttfam=\ninett
        \normalbaselineskip=11pt%
        \let\sc=\sevenrm        % Small caps
        \setbox\strutbox=\hbox{\vrule height8pt depth3pt width0pt}%
        \normalbaselines\rm}
\def\eightpoint{\def\rm{\fam0\eightrm}%
        \textfont0=\eightrm \scriptfont0=\sixrm \scriptscriptfont0=\fiverm
        \textfont1=\eighti \scriptfont1=\sixi \scriptscriptfont1=\fivei
        \textfont2=\eightsy \scriptfont2=\sixsy \scriptscriptfont2=\fivesy
        \textfont3=\tenex \scriptfont3=\tenex \scriptscriptfont3=\tenex
        \def\it{\fam\itfam\eightit}%
        \textfont\itfam=\eightit
        \def\sl{\fam\slfam\eightsl}%
        \textfont\slfam=\eightsl
        \def\bf{\fam\bffam\eightbf}%
        \textfont\bffam=\eightbf \scriptfont\bffam=\sixbf
                \scriptscriptfont\bffam=\fivebf
        \def\tt{\fam\ttfam\eighttt}%
        \textfont\ttfam=\eighttt
        \normalbaselineskip=9pt%
        \let\sc=\sixrm  % Small caps
        \setbox\strutbox=\hbox{\vrule height7pt depth2pt width0pt}%
        \normalbaselines\rm}
\def\sevenpoint{\def\rm{\fam0\sevenrm}%
        \textfont0=\sevenrm \scriptfont0=\fiverm \scriptscriptfont0=\fiverm
        \textfont1=\seveni \scriptfont1=\fivei \scriptscriptfont1=\fivei
        \textfont2=\sevensy \scriptfont2=\fivesy \scriptscriptfont2=\fivesy
        \textfont3=\tenex \scriptfont3=\tenex \scriptscriptfont3=\tenex
        \def\it{\fam\itfam\sevenit}%
        \textfont\itfam=\sevenit
        \def\sl{\fam\slfam\sevensl}%
        \textfont\slfam=\sevensl
        \def\bf{\fam\bffam\sevenbf}%
        \textfont\bffam=\sevenbf \scriptfont\bffam=\fivebf
                \scriptscriptfont\bffam=\fivebf
        \def\tt{\fam\ttfam\seventt}%
        \textfont\ttfam=\seventt
        \normalbaselineskip=8pt%
        \let\sc=\fiverm  % Small caps
        \setbox\strutbox=\hbox{\vrule height6pt depth2pt width0pt}%
        \normalbaselines\rm}\font\smallsymbol = cmmi8
\newdimen\xfiglen \newdimen\yfiglen
\xfiglen=2.5 true in
\yfiglen=1.5 true in
\newbox\figurelegendtwo
\newbox\figurelegendseven
\newbox\figureone
\newbox\figuretwo
\newbox\figurethree
\newbox\figurefour
\newbox\figurefive
\newbox\figuresix
\newbox\figureseven
\newbox\figureeight
\newbox\figurenine
%%%%%%%%%%%%%%%%%%

%
\setbox\figurelegendtwo=\hbox{
\small
\beginpicture
  \setcoordinatesystem units <0.3\xfiglen,0.55\yfiglen> %point at 0 -0.7
  \setplotarea x from 0 to 0.8, y from 0 to 0.65
\eightrm
\linethickness=0.8pt
\footnotesize
%  \put {$\Red{\diamond}\quad \alpha=0.25$} [l] at 0 0.6
%  \put {$\OliveGreen{\star}\quad \alpha=0.5$} [l] at 0 0.4
%  \put {$\Blue{\circ}\quad \alpha=0.9$} [l] at 0 0.2
%  \put {$\Black{\bullet}\quad \alpha=1.0$} [l] at 0 0.0
  \put {${\diamond}\quad \alpha=0.25$} [l] at 0 0.6
  \put {${\star}\quad \alpha=0.5$} [l] at 0 0.4
  \put {${\circ}\quad \alpha=0.9$} [l] at 0 0.2
  \put {${\bullet}\quad \alpha=1.0$} [l] at 0 0.0
\endpicture
\relax
}
\setbox\figurelegendseven=\hbox{
\small
\beginpicture
  \setcoordinatesystem units <0.3\xfiglen,0.6\yfiglen> %point at 0 -0.7
  \setplotarea x from 0 to 0.8, y from 0.1 to 0.7
\linethickness=0.7pt
\eightrm
  \put{$\star$} at 0 0.2
  \put {$\alpha=0.5$} [l] at 0.5 0.2
  \put{$\circ$} at 0 0.4
  \put {$\alpha=0.9$} [l] at 0.5 0.4
  \put{$\bullet$} at 0 0.6
  \put {$\alpha=1$} [l] at 0.5 0.6
\endpicture
\relax
}
\xfiglen=1.25 true in
\yfiglen=0.9 true in
\setbox\figureone=\vbox{\hsize=\xfiglen
\beginpicture
\eightrm
  \setcoordinatesystem units <\xfiglen,\yfiglen> % point at
  \setplotarea x from 0 to 2, y from 0 to 2
  \axis bottom shiftedto y=0 ticks short numbered from 0 to 2 by 0.5 /
  \axis left ticks short numbered from 0 to 2 by 0.5 /
\small
\eightrm
\linethickness=0.8pt
\setplotsymbol ({\eightrm .})
\setlinear
%\Blue{\relax
{\relax
\plot
         0    0.0008
    0.0314    0.0078
    0.0630    0.0348
    0.0946    0.0764
    0.1264    0.1296
    0.1583    0.2035
    0.1902    0.2938
    0.2218    0.3924
    0.2532    0.5118
    0.2842    0.6510
    0.3150    0.7942
    0.3455    0.9566
    0.3760    1.1453
    0.4065    1.3311
    0.4373    1.5318
    0.4683    1.7813
    0.4994    1.9780
    0.5307    2.0010
    0.5619    1.9208
    0.5930    1.8490
    0.6237    1.7695
    0.6540    1.6611
    0.6838    1.5531
    0.7131    1.4469
    0.7419    1.3292
    0.7702    1.2125
    0.7982    1.1057
    0.8260    1.0014
    0.8537    0.9002
    0.8815    0.8116
    0.9095    0.7348
    0.9379    0.6650
    0.9665    0.6066
    0.9954    0.5631
    1.0241    0.5295
    1.0525    0.5055
    1.0803    0.4946
    1.1071    0.4943
    1.1330    0.5016
    1.1580    0.5175
    1.1824    0.5426
    1.2065    0.5738
    1.2307    0.6088
    1.2553    0.6487
    1.2803    0.6936
    1.3056    0.7402
    1.3309    0.7860
    1.3557    0.8334
    1.3797    0.8828
    1.4024    0.9300
    1.4239    0.9734
    1.4444    1.0150
    1.4644    1.0563
    1.4843    1.0958
    1.5046    1.1302
    1.5257    1.1585
    1.5474    1.1843
    1.5696    1.2107
    1.5915    1.2342
    1.6125    1.2499
    1.6322    1.2599
    1.6501    1.2692
    1.6664    1.2788
    1.6813    1.2867
    1.6955    1.2903
    1.7097    1.2886
    1.7244    1.2824
    1.7399    1.2746
    1.7560    1.2688
    1.7723    1.2658
    1.7883    1.2615
    1.8033    1.2519
    1.8170    1.2372
    1.8292    1.2209
    1.8403    1.2060
    1.8506    1.1936
    1.8609    1.1836
    1.8716    1.1753
    1.8831    1.1671
    1.8951    1.1568
    1.9073    1.1425
    1.9191    1.1248
    1.9298    1.1069
    1.9389    1.0919
    1.9463    1.0808
    1.9523    1.0728
    1.9574    1.0668
    1.9623    1.0614
    1.9676    1.0560
    1.9736    1.0496
    1.9804    1.0416
    1.9873    1.0309
    1.9938    1.0176
    1.9989    1.0039
 /\relax}\relax
\setdashes <4pt>
%\Black{\relax
{\relax
\plot
         0         0
    0.0314    0.0079
    0.0630    0.0317
    0.0946    0.0716
    0.1264    0.1279
    0.1583    0.2005
    0.1902    0.2893
    0.2218    0.3936
    0.2532    0.5128
    0.2842    0.6463
    0.3150    0.7936
    0.3455    0.9550
    0.3760    1.1309
    0.4065    1.3222
    0.4373    1.5298
    0.4683    1.7541
    0.4994    1.9954
    0.5307    1.9584
    0.5619    1.8952
    0.5930    1.8145
    0.6237    1.7199
    0.6540    1.6155
    0.6838    1.5048
    0.7131    1.3912
    0.7419    1.2777
    0.7702    1.1664
    0.7982    1.0592
    0.8260    0.9575
    0.8537    0.8622
    0.8815    0.7745
    0.9095    0.6953
    0.9379    0.6258
    0.9665    0.5671
    0.9954    0.5204
    1.0241    0.4865
    1.0525    0.4653
    1.0803    0.4563
    1.1071    0.4581
    1.1330    0.4692
    1.1580    0.4878
    1.1824    0.5130
    1.2065    0.5439
    1.2307    0.5802
    1.2553    0.6216
    1.2803    0.6677
    1.3056    0.7175
    1.3309    0.7693
    1.3557    0.8214
    1.3797    0.8718
    1.4024    0.9193
    1.4239    0.9632
    1.4444    1.0038
    1.4644    1.0416
    1.4843    1.0775
    1.5046    1.1119
    1.5257    1.1449
    1.5474    1.1758
    1.5696    1.2037
    1.5915    1.2276
    1.6125    1.2470
    1.6322    1.2619
    1.6501    1.2727
    1.6664    1.2803
    1.6813    1.2854
    1.6955    1.2885
    1.7097    1.2901
    1.7244    1.2901
    1.7399    1.2884
    1.7560    1.2848
    1.7723    1.2793
    1.7883    1.2723
    1.8033    1.2643
    1.8170    1.2558
    1.8292    1.2474
    1.8403    1.2392
    1.8506    1.2310
    1.8609    1.2223
    1.8716    1.2129
    1.8831    1.2023
    1.8951    1.1908
    1.9073    1.1785
    1.9191    1.1664
    1.9298    1.1552
    1.9389    1.1454
    1.9463    1.1373
    1.9523    1.1307
    1.9574    1.1251
    1.9623    1.1196
    1.9676    1.1137
    1.9736    1.1070
    1.9804    1.0994
    1.9873    1.0916
    1.9938    1.0843
    1.9989    1.0785
 /\relax}\relax
\endpicture
}
\setbox\figuretwo=\vbox{\hsize=\xfiglen
\beginpicture
\eightrm
  \setcoordinatesystem units <\xfiglen,\yfiglen>  point at 0 0
  \setplotarea x from 0 to 2, y from 0 to 2
  \axis bottom shiftedto y=0 ticks short numbered from 0 to 2 by 0.5 /
  \axis left ticks short numbered from 0 to 2 by 0.5 /
\setdashes <4pt>
%\Black{\relax
{\relax
\plot
         0         0
    0.0314    0.0079
    0.0630    0.0317
    0.0946    0.0716
    0.1264    0.1279
    0.1583    0.2005
    0.1902    0.2893
    0.2218    0.3936
    0.2532    0.5128
    0.2842    0.6463
    0.3150    0.7936
    0.3455    0.9550
    0.3760    1.1309
    0.4065    1.3222
    0.4373    1.5298
    0.4683    1.7541
    0.4994    1.9954
    0.5307    1.9584
    0.5619    1.8952
    0.5930    1.8145
    0.6237    1.7199
    0.6540    1.6155
    0.6838    1.5048
    0.7131    1.3912
    0.7419    1.2777
    0.7702    1.1664
    0.7982    1.0592
    0.8260    0.9575
    0.8537    0.8622
    0.8815    0.7745
    0.9095    0.6953
    0.9379    0.6258
    0.9665    0.5671
    0.9954    0.5204
    1.0241    0.4865
    1.0525    0.4653
    1.0803    0.4563
    1.1071    0.4581
    1.1330    0.4692
    1.1580    0.4878
    1.1824    0.5130
    1.2065    0.5439
    1.2307    0.5802
    1.2553    0.6216
    1.2803    0.6677
    1.3056    0.7175
    1.3309    0.7693
    1.3557    0.8214
    1.3797    0.8718
    1.4024    0.9193
    1.4239    0.9632
    1.4444    1.0038
    1.4644    1.0416
    1.4843    1.0775
    1.5046    1.1119
    1.5257    1.1449
    1.5474    1.1758
    1.5696    1.2037
    1.5915    1.2276
    1.6125    1.2470
    1.6322    1.2619
    1.6501    1.2727
    1.6664    1.2803
    1.6813    1.2854
    1.6955    1.2885
    1.7097    1.2901
    1.7244    1.2901
    1.7399    1.2884
    1.7560    1.2848
    1.7723    1.2793
    1.7883    1.2723
    1.8033    1.2643
    1.8170    1.2558
    1.8292    1.2474
    1.8403    1.2392
    1.8506    1.2310
    1.8609    1.2223
    1.8716    1.2129
    1.8831    1.2023
    1.8951    1.1908
    1.9073    1.1785
    1.9191    1.1664
    1.9298    1.1552
    1.9389    1.1454
    1.9463    1.1373
    1.9523    1.1307
    1.9574    1.1251
    1.9623    1.1196
    1.9676    1.1137
    1.9736    1.1070
    1.9804    1.0994
    1.9873    1.0916
    1.9938    1.0843
    1.9989    1.0785
 /\relax}\relax
\setlinear
\setsolid 
%\Blue{\relax
{\relax
\plot
         0   -0.0945
    0.0312   -0.0439
    0.0625    0.0501
    0.0939    0.1725
    0.1256    0.3012
    0.1572    0.4398
    0.1889    0.5748
    0.2203    0.7123
    0.2515    0.8553
    0.2824    0.9946
    0.3129    1.1302
    0.3432    1.2651
    0.3735    1.3933
    0.4038    1.5114
    0.4344    1.6197
    0.4652    1.7132
    0.4962    1.7854
    0.5273    1.8343
    0.5583    1.8563
    0.5892    1.8470
    0.6198    1.8068
    0.6499    1.7374
    0.6795    1.6403
    0.7086    1.5194
    0.7371    1.3801
    0.7651    1.2271
    0.7928    1.0651
    0.8203    0.8997
    0.8476    0.7360
    0.8751    0.5788
    0.9029    0.4341
    0.9310    0.3084
    0.9594    0.2085
    0.9881    0.1407
    1.0168    0.1098
    1.0452    0.1166
    1.0729    0.1573
    1.0999    0.2250
    1.1259    0.3120
    1.1511    0.4110
    1.1757    0.5164
    1.2001    0.6242
    1.2245    0.7316
    1.2493    0.8357
    1.2745    0.9326
    1.3000    1.0181
    1.3255    1.0895
    1.3504    1.1459
    1.3745    1.1868
    1.3973    1.2131
    1.4188    1.2284
    1.4393    1.2370
    1.4592    1.2413
    1.4790    1.2414
    1.4993    1.2366
    1.5202    1.2279
    1.5418    1.2184
    1.5638    1.2100
    1.5855    1.2004
    1.6064    1.1873
    1.6259    1.1732
    1.6436    1.1621
    1.6597    1.1547
    1.6745    1.1490
    1.6886    1.1429
    1.7026    1.1352
    1.7172    1.1263
    1.7325    1.1185
    1.7485    1.1149
    1.7648    1.1163
    1.7807    1.1198
    1.7957    1.1221
    1.8094    1.1232
    1.8216    1.1251
    1.8327    1.1295
    1.8431    1.1374
    1.8535    1.1491
    1.8643    1.1644
    1.8758    1.1823
    1.8880    1.2005
    1.9003    1.2171
    1.9122    1.2328
    1.9230    1.2499
    1.9323    1.2691
    1.9399    1.2890
    1.9461    1.3080
    1.9513    1.3260
    1.9564    1.3444
    1.9619    1.3648
    1.9681    1.3872
    1.9750    1.4092
    1.9821    1.4263
    1.9886    1.4344
    1.9939    1.4341
    1.9973    1.4302
    1.9985    1.4281
 /\relax}\relax
\endpicture}
%%%%%
%
\setbox\figurethree=\vbox{\hsize=\xfiglen
\beginpicture
  \setcoordinatesystem units <\xfiglen,\yfiglen> % point at
  \setplotarea x from 0 to 2, y from 0 to 2
  \axis bottom shiftedto y=0 ticks short numbered from 0 to 2 by 0.5 /
  \axis left ticks short numbered from 0 to 2 by 0.5 /
\ninerm
%\Black{\relax
{\relax
\plot
         0         0
    0.0314    0.0079
    0.0630    0.0317
    0.0946    0.0716
    0.1264    0.1279
    0.1583    0.2005
    0.1902    0.2893
    0.2218    0.3936
    0.2532    0.5128
    0.2842    0.6463
    0.3150    0.7936
    0.3455    0.9550
    0.3760    1.1309
    0.4065    1.3222
    0.4373    1.5298
    0.4683    1.7541
    0.4994    1.9954
    0.5307    1.9584
    0.5619    1.8952
    0.5930    1.8145
    0.6237    1.7199
    0.6540    1.6155
    0.6838    1.5048
    0.7131    1.3912
    0.7419    1.2777
    0.7702    1.1664
    0.7982    1.0592
    0.8260    0.9575
    0.8537    0.8622
    0.8815    0.7745
    0.9095    0.6953
    0.9379    0.6258
    0.9665    0.5671
    0.9954    0.5204
    1.0241    0.4865
    1.0525    0.4653
    1.0803    0.4563
    1.1071    0.4581
    1.1330    0.4692
    1.1580    0.4878
    1.1824    0.5130
    1.2065    0.5439
    1.2307    0.5802
    1.2553    0.6216
    1.2803    0.6677
    1.3056    0.7175
    1.3309    0.7693
    1.3557    0.8214
    1.3797    0.8718
    1.4024    0.9193
    1.4239    0.9632
    1.4444    1.0038
    1.4644    1.0416
    1.4843    1.0775
    1.5046    1.1119
    1.5257    1.1449
    1.5474    1.1758
    1.5696    1.2037
    1.5915    1.2276
    1.6125    1.2470
    1.6322    1.2619
    1.6501    1.2727
    1.6664    1.2803
    1.6813    1.2854
    1.6955    1.2885
    1.7097    1.2901
    1.7244    1.2901
    1.7399    1.2884
    1.7560    1.2848
    1.7723    1.2793
    1.7883    1.2723
    1.8033    1.2643
    1.8170    1.2558
    1.8292    1.2474
    1.8403    1.2392
    1.8506    1.2310
    1.8609    1.2223
    1.8716    1.2129
    1.8831    1.2023
    1.8951    1.1908
    1.9073    1.1785
    1.9191    1.1664
    1.9298    1.1552
    1.9389    1.1454
    1.9463    1.1373
    1.9523    1.1307
    1.9574    1.1251
    1.9623    1.1196
    1.9676    1.1137
    1.9736    1.1070
    1.9804    1.0994
    1.9873    1.0916
    1.9938    1.0843
    1.9989    1.0785
 /\relax}\relax
\endpicture
}
\setbox\figurefour=\vbox{\hsize=\xfiglen
\beginpicture
\eightrm
  \setcoordinatesystem units <\xfiglen,0.4\yfiglen> % point at
  \setplotarea x from 0 to 2, y from -5 to 0
  \axis bottom shiftedto y=-5 ticks short numbered from 0 to 2 by 0.5 /
  \axis left ticks short unlabeled from -5 to 0 by 1 /
  %\axis left ticks short numbered from -5 to 0 by 1 /
  \put {-5} [r] at -0.06 -5
  \put {-4} [r] at -0.06 -4
  \put {-3} [r] at -0.06 -3
  \put {-2} [r] at -0.06 -2
  \put {-1} [r] at -0.06 -1
  \put {0} [r] at -0.06 0
\small
\eightrm
\linethickness=0.8pt
\setplotsymbol ({\eightrm .})
\setlinear
\setsolid 
\setdashes <4pt>
%\Black{\relax
{\relax
\plot
         0         0
    0.0312   -0.0435
    0.0625   -0.0806
    0.0940   -0.1117
    0.1256   -0.1371
    0.1573   -0.1571
    0.1889   -0.1719
    0.2203   -0.1820
    0.2515   -0.1877
    0.2823   -0.1895
    0.3129   -0.1879
    0.3432   -0.1834
    0.3735   -0.1762
    0.4039   -0.1667
    0.4344   -0.1551
    0.4652   -0.1417
    0.4962   -0.1269
    0.5273   -0.1110
    0.5583   -0.0945
    0.5892   -0.0778
    0.6197   -0.0614
    0.6499   -0.0456
    0.6795   -0.0307
    0.7086   -0.0171
    0.7372   -0.0049
    0.7654    0.0055
    0.7933    0.0142
    0.8209    0.0209
    0.8485    0.0253
    0.8762    0.0274
    0.9041    0.0267
    0.9323    0.0230
    0.9608    0.0159
    0.9895    0.0050
    1.0182   -0.0099
    1.0465   -0.0288
    1.0741   -0.0516
    1.1009   -0.0779
    1.1267   -0.1075
    1.1516   -0.1403
    1.1759   -0.1762
    1.2000   -0.2159
    1.2241   -0.2601
    1.2486   -0.3095
    1.2735   -0.3647
    1.2988   -0.4259
    1.3240   -0.4926
    1.3488   -0.5635
    1.3727   -0.6372
    1.3955   -0.7124
    1.4170   -0.7881
    1.4374   -0.8646
    1.4574   -0.9430
    1.4773   -1.0255
    1.4976   -1.1143
    1.5187   -1.2110
    1.5404   -1.3161
    1.5626   -1.4286
    1.5845   -1.5457
    1.6056   -1.6638
    1.6252   -1.7788
    1.6432   -1.8880
    1.6595   -1.9906
    1.6744   -2.0880
    1.6887   -2.1836
    1.7030   -2.2817
    1.7177   -2.3862
    1.7332   -2.4991
    1.7494   -2.6202
    1.7658   -2.7469
    1.7818   -2.8745
    1.7969   -2.9980
    1.8106   -3.1132
    1.8229   -3.2188
    1.8340   -3.3161
    1.8444   -3.4091
    1.8548   -3.5030
    1.8656   -3.6026
    1.8770   -3.7107
    1.8891   -3.8266
    1.9014   -3.9466
    1.9132   -4.0644
    1.9239   -4.1733
    1.9331   -4.2678
    1.9406   -4.3460
    1.9466   -4.4098
    1.9518   -4.4647
    1.9567   -4.5179
    1.9621   -4.5757
    1.9681   -4.6422
    1.9749   -4.7166
    1.9819   -4.7943
    1.9883   -4.8672
    1.9935   -4.9260
    1.9968   -4.9636
    1.9980   -4.9768
 /\relax}\relax
\setdots <3pt> 
%\Red{\relax
{\relax
\plot
         0   -0.0004
    0.0312   -0.0231
    0.0625   -0.0423
    0.0940   -0.0525
    0.1256   -0.0577
    0.1573   -0.0586
    0.1889   -0.0530
    0.2203   -0.0429
    0.2515   -0.0294
    0.2823   -0.0113
    0.3129    0.0106
    0.3432    0.0351
    0.3735    0.0625
    0.4039    0.0929
    0.4344    0.1249
    0.4652    0.1585
    0.4962    0.1938
    0.5273    0.2301
    0.5583    0.2668
    0.5892    0.3039
    0.6197    0.3414
    0.6499    0.3782
    0.6795    0.4142
    0.7086    0.4497
    0.7373    0.4840
    0.7654    0.5163
    0.7933    0.5471
    0.8209    0.5763
    0.8485    0.6031
    0.8762    0.6268
    0.9041    0.6485
    0.9323    0.6678
    0.9608    0.6831
    0.9896    0.6950
    1.0182    0.7048
    1.0465    0.7107
    1.0741    0.7116
    1.1009    0.7096
    1.1267    0.7051
    1.1516    0.6954
    1.1759    0.6804
    1.2000    0.6625
    1.2241    0.6422
    1.2486    0.6166
    1.2735    0.5845
    1.2988    0.5498
    1.3241    0.5139
    1.3488    0.4722
    1.3727    0.4231
    1.3955    0.3717
    1.4170    0.3201
    1.4375    0.2645
    1.4574    0.2016
    1.4773    0.1330
    1.4976    0.0640
    1.5187   -0.0037
    1.5405   -0.0754
    1.5626   -0.1567
    1.5845   -0.2423
    1.6056   -0.3236
    1.6253   -0.4028
    1.6432   -0.4883
    1.6595   -0.5814
    1.6745   -0.6775
    1.6887   -0.7723
    1.7030   -0.8643
    1.7177   -0.9546
    1.7332   -1.0461
    1.7494   -1.1426
    1.7658   -1.2445
    1.7818   -1.3470
    1.7969   -1.4452
    1.8106   -1.5381
    1.8229   -1.6277
    1.8340   -1.7163
    1.8444   -1.8062
    1.8548   -1.8996
    1.8656   -1.9970
    1.8771   -2.0946
    1.8891   -2.1854
    1.9014   -2.2650
    1.9132   -2.3371
    1.9239   -2.4088
    1.9331   -2.4822
    1.9406   -2.5532
    1.9466   -2.6183
    1.9518   -2.6784
    1.9567   -2.7392
    1.9621   -2.8067
    1.9682   -2.8826
    1.9749   -2.9613
    1.9819   -3.0299
    1.9884   -3.0762
    1.9936   -3.0975
    1.9968   -3.1026
    1.9980   -3.1027
 /\relax}\relax
\setdashes <4pt> 
%\OliveGreen{\relax
{\relax
\plot
         0   -0.0006
    0.0312   -0.0410
    0.0625   -0.0795
    0.0940   -0.1075
    0.1256   -0.1309
    0.1573   -0.1506
    0.1889   -0.1633
    0.2203   -0.1713
    0.2515   -0.1766
    0.2823   -0.1768
    0.3129   -0.1731
    0.3432   -0.1673
    0.3735   -0.1584
    0.4039   -0.1465
    0.4344   -0.1330
    0.4652   -0.1182
    0.4962   -0.1015
    0.5273   -0.0838
    0.5583   -0.0660
    0.5892   -0.0478
    0.6197   -0.0291
    0.6499   -0.0113
    0.6795    0.0055
    0.7086    0.0220
    0.7373    0.0373
    0.7654    0.0502
    0.7933    0.0616
    0.8209    0.0717
    0.8485    0.0790
    0.8762    0.0830
    0.9041    0.0852
    0.9323    0.0852
    0.9608    0.0808
    0.9896    0.0726
    1.0182    0.0631
    1.0465    0.0497
    1.0741    0.0303
    1.1009    0.0085
    1.1267   -0.0150
    1.1516   -0.0442
    1.1759   -0.0797
    1.2000   -0.1172
    1.2241   -0.1559
    1.2486   -0.2006
    1.2735   -0.2530
    1.2988   -0.3071
    1.3241   -0.3602
    1.3488   -0.4201
    1.3727   -0.4892
    1.3955   -0.5595
    1.4170   -0.6274
    1.4375   -0.6992
    1.4574   -0.7802
    1.4773   -0.8677
    1.4976   -0.9537
    1.5187   -1.0348
    1.5405   -1.1200
    1.5626   -1.2181
    1.5845   -1.3211
    1.6056   -1.4152
    1.6253   -1.5038
    1.6432   -1.6000
    1.6595   -1.7065
    1.6745   -1.8164
    1.6887   -1.9227
    1.7030   -2.0226
    1.7177   -2.1176
    1.7332   -2.2129
    1.7494   -2.3155
    1.7658   -2.4264
    1.7818   -2.5372
    1.7969   -2.6387
    1.8106   -2.7302
    1.8229   -2.8168
    1.8340   -2.9036
    1.8444   -2.9942
    1.8548   -3.0912
    1.8656   -3.1944
    1.8771   -3.2983
    1.8891   -3.3931
    1.9014   -3.4723
    1.9132   -3.5401
    1.9239   -3.6071
    1.9331   -3.6783
    1.9406   -3.7499
    1.9466   -3.8173
    1.9518   -3.8807
    1.9567   -3.9455
    1.9621   -4.0178
    1.9682   -4.0992
    1.9749   -4.1825
    1.9819   -4.2525
    1.9884   -4.2950
    1.9936   -4.3087
    1.9968   -4.3065
 /\relax}\relax
\setsolid
%\Blue{\relax  % reconstruction of $f_{act} =  2u(1-u)(u-0.75)$ with 1% noise;
{\relax  % reconstruction of $f_{act} =  2u(1-u)(u-0.75)$ with 1% noise;
\plot
         0   -0.0006
    0.0312   -0.0427
    0.0625   -0.0829
    0.0940   -0.1126
    0.1256   -0.1377
    0.1573   -0.1592
    0.1889   -0.1736
    0.2203   -0.1834
    0.2515   -0.1904
    0.2823   -0.1924
    0.3129   -0.1905
    0.3432   -0.1865
    0.3735   -0.1795
    0.4039   -0.1694
    0.4344   -0.1578
    0.4652   -0.1449
    0.4962   -0.1301
    0.5273   -0.1143
    0.5583   -0.0986
    0.5892   -0.0823
    0.6197   -0.0657
    0.6499   -0.0500
    0.6795   -0.0353
    0.7086   -0.0211
    0.7372   -0.0080
    0.7654    0.0026
    0.7933    0.0116
    0.8209    0.0193
    0.8485    0.0242
    0.8762    0.0256
    0.9041    0.0251
    0.9323    0.0225
    0.9608    0.0152
    0.9895    0.0041
    1.0182   -0.0084
    1.0465   -0.0249
    1.0741   -0.0477
    1.1009   -0.0729
    1.1267   -0.0998
    1.1516   -0.1327
    1.1759   -0.1720
    1.2000   -0.2135
    1.2241   -0.2562
    1.2486   -0.3052
    1.2735   -0.3624
    1.2988   -0.4212
    1.3240   -0.4792
    1.3488   -0.5443
    1.3727   -0.6192
    1.3955   -0.6954
    1.4170   -0.7691
    1.4374   -0.8470
    1.4574   -0.9347
    1.4773   -1.0294
    1.4976   -1.1225
    1.5187   -1.2109
    1.5404   -1.3039
    1.5626   -1.4111
    1.5845   -1.5237
    1.6056   -1.6271
    1.6252   -1.7252
    1.6432   -1.8317
    1.6595   -1.9497
    1.6744   -2.0717
    1.6887   -2.1904
    1.7030   -2.3028
    1.7177   -2.4105
    1.7332   -2.5190
    1.7494   -2.6359
    1.7658   -2.7623
    1.7818   -2.8898
    1.7969   -3.0085
    1.8106   -3.1171
    1.8229   -3.2202
    1.8340   -3.3229
    1.8444   -3.4287
    1.8548   -3.5407
    1.8656   -3.6588
    1.8770   -3.7774
    1.8891   -3.8858
    1.9014   -3.9774
    1.9132   -4.0570
    1.9239   -4.1357
    1.9331   -4.2181
    1.9406   -4.2999
    1.9466   -4.3758
    1.9518   -4.4467
    1.9567   -4.5185
    1.9621   -4.5980
    1.9681   -4.6865
    1.9749   -4.7757
    1.9819   -4.8486
    1.9883   -4.8902
    1.9935   -4.9003
    1.9968   -4.8942
    1.9980   -4.8897
 /\relax}\relax
\endpicture
}
\setbox\figurefive=\vbox{\hsize=\xfiglen
\beginpicture
\eightrm
  \setcoordinatesystem units <\xfiglen,0.4\yfiglen> % point at
  \setplotarea x from 0 to 2, y from -5 to 0
  \axis bottom shiftedto y=-5 ticks short numbered from 0 to 2 by 0.5 /
  %\axis left ticks short numbered from -5 to 0 by 1 /
  \axis left ticks short unlabeled from -5 to 0 by 1 /
  \put {-5} [r] at -0.06 -5
  \put {-4} [r] at -0.06 -4
  \put {-3} [r] at -0.06 -3
  \put {-2} [r] at -0.06 -2
  \put {-1} [r] at -0.06 -1
  \put {0} [r] at -0.06 0
\small
\eightrm
\linethickness=0.8pt
\setplotsymbol ({\eightrm .})
\setlinear
\setdots <3pt> 
%\Red{\relax
{\relax
\plot
         0   -0.0005
    0.0305   -0.0238
    0.0614   -0.0445
    0.0931   -0.0558
    0.1257   -0.0616
    0.1587   -0.0637
    0.1917   -0.0593
    0.2239   -0.0498
    0.2549   -0.0371
    0.2846   -0.0200
    0.3130    0.0013
    0.3408    0.0254
    0.3686    0.0521
    0.3972    0.0819
    0.4271    0.1140
    0.4586    0.1475
    0.4916    0.1825
    0.5255    0.2191
    0.5598    0.2562
    0.5938    0.2935
    0.6267    0.3314
    0.6581    0.3690
    0.6877    0.4058
    0.7155    0.4418
    0.7414    0.4771
    0.7659    0.5110
    0.7895    0.5430
    0.8127    0.5731
    0.8365    0.6016
    0.8616    0.6279
    0.8887    0.6513
    0.9184    0.6717
    0.9505    0.6901
    0.9843    0.7053
    1.0189    0.7161
    1.0527    0.7247
    1.0843    0.7299
    1.1126    0.7299
    1.1371    0.7263
    1.1584    0.7198
    1.1777    0.7095
    1.1968    0.6943
    1.2176    0.6736
    1.2414    0.6490
    1.2686    0.6233
    1.2988    0.5922
    1.3301    0.5519
    1.3604    0.5119
    1.3877    0.4720
    1.4104    0.4214
    1.4283    0.3630
    1.4427    0.3056
    1.4556    0.2491
    1.4697    0.1867
    1.4876    0.1153
    1.5106    0.0421
    1.5385   -0.0306
    1.5699   -0.1163
    1.6017   -0.1937
    1.6309   -0.2557
    1.6546   -0.3611
    1.6713   -0.4977
    1.6812   -0.6041
    1.6863   -0.6641
    1.6895   -0.7038
    1.6943   -0.7632
    1.7032   -0.8749
    1.7176   -1.0394
    1.7370   -1.1911
    1.7592   -1.2512
    1.7814   -1.2786
    1.8007   -1.3905
    1.8150   -1.5555
    1.8238   -1.6842
    1.8285   -1.7562
    1.8317   -1.8036
    1.8362   -1.8718
    1.8448   -1.9902
    1.8587   -2.1316
    1.8775   -2.2042
    1.8990   -2.2128
    1.9202   -2.3319
    1.9378   -2.5719
    1.9495   -2.7658
    1.9547   -2.8482
    1.9547   -2.8470
    1.9521   -2.8065
    1.9504   -2.7791
    1.9526   -2.8149
    1.9604   -2.9298
    1.9736   -3.0665
    1.9898   -3.1224
 /\relax}\relax
\setdashes <4pt> 
%\OliveGreen{\relax
{\relax
\plot
         0   -0.0009
    0.0305   -0.0432
    0.0614   -0.0852
    0.0931   -0.1159
    0.1257   -0.1414
    0.1587   -0.1643
    0.1917   -0.1797
    0.2239   -0.1900
    0.2549   -0.1978
    0.2846   -0.2009
    0.3130   -0.1995
    0.3408   -0.1957
    0.3686   -0.1894
    0.3972   -0.1800
    0.4271   -0.1681
    0.4586   -0.1552
    0.4916   -0.1407
    0.5255   -0.1246
    0.5598   -0.1082
    0.5938   -0.0916
    0.6267   -0.0745
    0.6581   -0.0579
    0.6877   -0.0421
    0.7155   -0.0271
    0.7414   -0.0130
    0.7659   -0.0005
    0.7895    0.0101
    0.8127    0.0188
    0.8365    0.0256
    0.8616    0.0302
    0.8887    0.0318
    0.9184    0.0305
    0.9505    0.0269
    0.9843    0.0200
    1.0189    0.0088
    1.0527   -0.0049
    1.0843   -0.0219
    1.1126   -0.0439
    1.1371   -0.0700
    1.1584   -0.0989
    1.1777   -0.1313
    1.1968   -0.1687
    1.2176   -0.2120
    1.2414   -0.2592
    1.2686   -0.3068
    1.2988   -0.3598
    1.3301   -0.4228
    1.3604   -0.4844
    1.3877   -0.5450
    1.4104   -0.6178
    1.4283   -0.6987
    1.4427   -0.7770
    1.4556   -0.8533
    1.4697   -0.9367
    1.4876   -1.0309
    1.5106   -1.1249
    1.5385   -1.2146
    1.5699   -1.3209
    1.6017   -1.4176
    1.6309   -1.4922
    1.6546   -1.6174
    1.6713   -1.7793
    1.6812   -1.9045
    1.6863   -1.9745
    1.6895   -2.0208
    1.6943   -2.0896
    1.7032   -2.2182
    1.7176   -2.4049
    1.7370   -2.5733
    1.7592   -2.6385
    1.7814   -2.6704
    1.8007   -2.7946
    1.8150   -2.9737
    1.8238   -3.1119
    1.8285   -3.1888
    1.8317   -3.2393
    1.8362   -3.3118
    1.8448   -3.4372
    1.8587   -3.5861
    1.8775   -3.6617
    1.8990   -3.6690
    1.9202   -3.7891
    1.9378   -4.0330
    1.9495   -4.2298
    1.9547   -4.3133
    1.9547   -4.3121
    1.9521   -4.2710
    1.9504   -4.2433
    1.9526   -4.2796
    1.9604   -4.3960
    1.9736   -4.5342
    1.9898   -4.5908
 /\relax}\relax
\setsolid 
%\Blue{\relax 
{\relax 
% iteration scheme Newton reconstruction of $f_{act} =  2u(1-u)(u-0.75)$ with 5% noise;
% after 5 iterations
\plot
         0   -0.0009
    0.0305   -0.0435
    0.0614   -0.0857
    0.0931   -0.1168
    0.1257   -0.1425
    0.1587   -0.1656
    0.1917   -0.1813
    0.2239   -0.1919
    0.2549   -0.2000
    0.2846   -0.2035
    0.3130   -0.2023
    0.3408   -0.1987
    0.3686   -0.1928
    0.3972   -0.1837
    0.4271   -0.1721
    0.4586   -0.1595
    0.4916   -0.1454
    0.5255   -0.1296
    0.5598   -0.1136
    0.5938   -0.0974
    0.6267   -0.0807
    0.6581   -0.0644
    0.6877   -0.0491
    0.7155   -0.0345
    0.7414   -0.0208
    0.7659   -0.0088
    0.7895    0.0013
    0.8127    0.0094
    0.8365    0.0157
    0.8616    0.0197
    0.8887    0.0206
    0.9184    0.0185
    0.9505    0.0142
    0.9843    0.0065
    1.0189   -0.0057
    1.0527   -0.0203
    1.0843   -0.0383
    1.1126   -0.0615
    1.1371   -0.0889
    1.1584   -0.1192
    1.1777   -0.1529
    1.1968   -0.1917
    1.2176   -0.2367
    1.2414   -0.2855
    1.2686   -0.3348
    1.2988   -0.3898
    1.3301   -0.4552
    1.3604   -0.5192
    1.3877   -0.5824
    1.4104   -0.6583
    1.4283   -0.7424
    1.4427   -0.8237
    1.4556   -0.9028
    1.4697   -0.9892
    1.4876   -1.0866
    1.5106   -1.1837
    1.5385   -1.2775
    1.5699   -1.3900
    1.6017   -1.4920
    1.6309   -1.5702
    1.6546   -1.7033
    1.6713   -1.8760
    1.6812   -2.0098
    1.6863   -2.0846
    1.6895   -2.1340
    1.6943   -2.2075
    1.7032   -2.3449
    1.7176   -2.5442
    1.7370   -2.7238
    1.7592   -2.7934
    1.7814   -2.8295
    1.8007   -2.9666
    1.8150   -3.1626
    1.8238   -3.3134
    1.8285   -3.3972
    1.8317   -3.4522
    1.8362   -3.5313
    1.8448   -3.6679
    1.8587   -3.8308
    1.8775   -3.9160
    1.8990   -3.9306
    1.9202   -4.0698
    1.9378   -4.3408
    1.9495   -4.5567
    1.9547   -4.6477
    1.9547   -4.6465
    1.9521   -4.6017
    1.9504   -4.5715
    1.9526   -4.6110
    1.9604   -4.7374
    1.9736   -4.8855
    1.9898   -4.9432
 /\relax}\relax
\setdashes <4pt>
%\Black{\relax
{\relax
\plot
         0         0
    0.0312   -0.0435
    0.0625   -0.0806
    0.0940   -0.1117
    0.1256   -0.1371
    0.1573   -0.1571
    0.1889   -0.1719
    0.2203   -0.1820
    0.2515   -0.1877
    0.2823   -0.1895
    0.3129   -0.1879
    0.3432   -0.1834
    0.3735   -0.1762
    0.4039   -0.1667
    0.4344   -0.1551
    0.4652   -0.1417
    0.4962   -0.1269
    0.5273   -0.1110
    0.5583   -0.0945
    0.5892   -0.0778
    0.6197   -0.0614
    0.6499   -0.0456
    0.6795   -0.0307
    0.7086   -0.0171
    0.7372   -0.0049
    0.7654    0.0055
    0.7933    0.0142
    0.8209    0.0209
    0.8485    0.0253
    0.8762    0.0274
    0.9041    0.0267
    0.9323    0.0230
    0.9608    0.0159
    0.9895    0.0050
    1.0182   -0.0099
    1.0465   -0.0288
    1.0741   -0.0516
    1.1009   -0.0779
    1.1267   -0.1075
    1.1516   -0.1403
    1.1759   -0.1762
    1.2000   -0.2159
    1.2241   -0.2601
    1.2486   -0.3095
    1.2735   -0.3647
    1.2988   -0.4259
    1.3240   -0.4926
    1.3488   -0.5635
    1.3727   -0.6372
    1.3955   -0.7124
    1.4170   -0.7881
    1.4374   -0.8646
    1.4574   -0.9430
    1.4773   -1.0255
    1.4976   -1.1143
    1.5187   -1.2110
    1.5404   -1.3161
    1.5626   -1.4286
    1.5845   -1.5457
    1.6056   -1.6638
    1.6252   -1.7788
    1.6432   -1.8880
    1.6595   -1.9906
    1.6744   -2.0880
    1.6887   -2.1836
    1.7030   -2.2817
    1.7177   -2.3862
    1.7332   -2.4991
    1.7494   -2.6202
    1.7658   -2.7469
    1.7818   -2.8745
    1.7969   -2.9980
    1.8106   -3.1132
    1.8229   -3.2188
    1.8340   -3.3161
    1.8444   -3.4091
    1.8548   -3.5030
    1.8656   -3.6026
    1.8770   -3.7107
    1.8891   -3.8266
    1.9014   -3.9466
    1.9132   -4.0644
    1.9239   -4.1733
    1.9331   -4.2678
    1.9406   -4.3460
    1.9466   -4.4098
    1.9518   -4.4647
    1.9567   -4.5179
    1.9621   -4.5757
    1.9681   -4.6422
    1.9749   -4.7166
    1.9819   -4.7943
    1.9883   -4.8672
    1.9935   -4.9260
    1.9968   -4.9636
    1.9980   -4.9768
 /\relax}\relax
\endpicture
}
\setbox\figuresix=\vbox{\hsize=\xfiglen
\beginpicture
\eightrm
  \setcoordinatesystem units <\xfiglen,0.4\yfiglen>  point at 0 -5
  \setplotarea x from 0 to 2, y from -5 to 0
  \axis bottom shiftedto y=-5 ticks short numbered from 0 to 2 by 0.5 /
  %\axis left ticks short numbered from -5 to 0 by 1 /
  \axis left ticks short unlabeled from -5 to 0 by 1 /
  \put {-5} [r] at -0.06 -5
  \put {-4} [r] at -0.06 -4
  \put {-3} [r] at -0.06 -3
  \put {-2} [r] at -0.06 -2
  \put {-1} [r] at -0.06 -1
  \put {0} [r] at -0.06 0
\small
\eightrm
\linethickness=0.8pt
\setplotsymbol ({\eightrm .})
\setlinear
\setsolid 
%\Blue{\relax 
{\relax 
% Full Newton reconstruction of $f_{act} =  2u(1-u)(u-0.75)$ with 1% noise;
% after 10 iterations
\plot
         0   -0.0417
    0.0312   -0.0741
    0.0625   -0.1048
    0.0940   -0.1297
    0.1256   -0.1509
    0.1573   -0.1687
    0.1889   -0.1819
    0.2203   -0.1917
    0.2515   -0.1988
    0.2823   -0.2028
    0.3129   -0.2039
    0.3432   -0.2032
    0.3735   -0.2005
    0.4039   -0.1957
    0.4344   -0.1894
    0.4652   -0.1818
    0.4962   -0.1729
    0.5273   -0.1628
    0.5583   -0.1525
    0.5892   -0.1419
    0.6197   -0.1309
    0.6499   -0.1202
    0.6795   -0.1101
    0.7086   -0.0999
    0.7373   -0.0899
    0.7654   -0.0808
    0.7933   -0.0718
    0.8209   -0.0623
    0.8485   -0.0533
    0.8762   -0.0453
    0.9041   -0.0369
    0.9323   -0.0283
    0.9608   -0.0218
    0.9896   -0.0171
    1.0182   -0.0128
    1.0465   -0.0117
    1.0741   -0.0157
    1.1009   -0.0222
    1.1267   -0.0308
    1.1516   -0.0453
    1.1759   -0.0661
    1.2000   -0.0903
    1.2241   -0.1179
    1.2486   -0.1544
    1.2735   -0.2025
    1.2988   -0.2571
    1.3241   -0.3166
    1.3488   -0.3871
    1.3727   -0.4691
    1.3955   -0.5532
    1.4170   -0.6352
    1.4375   -0.7210
    1.4574   -0.8170
    1.4773   -0.9221
    1.4976   -1.0311
    1.5187   -1.1433
    1.5405   -1.2680
    1.5626   -1.4114
    1.5845   -1.5609
    1.6056   -1.6983
    1.6253   -1.8236
    1.6432   -1.9472
    1.6595   -2.0712
    1.6745   -2.1906
    1.6887   -2.3012
    1.7030   -2.4042
    1.7177   -2.5055
    1.7332   -2.6156
    1.7494   -2.7439
    1.7658   -2.8873
    1.7818   -3.0275
    1.7969   -3.1469
    1.8106   -3.2437
    1.8229   -3.3276
    1.8340   -3.4088
    1.8444   -3.4942
    1.8548   -3.5884
    1.8656   -3.6923
    1.8771   -3.8012
    1.8891   -3.9038
    1.9014   -3.9906
    1.9132   -4.0621
    1.9239   -4.1276
    1.9331   -4.1929
    1.9406   -4.2565
    1.9466   -4.3154
    1.9518   -4.3703
    1.9567   -4.4258
    1.9621   -4.4869
    1.9682   -4.5544
    1.9749   -4.6207
    1.9819   -4.6717
    1.9884   -4.6954
    1.9936   -4.6932
    1.9968   -4.6805
    1.9980   -4.6738
 /\relax}\relax
\setdashes <4pt>
%\Black{\relax
{\relax
\plot
         0         0
    0.0312   -0.0435
    0.0625   -0.0806
    0.0940   -0.1117
    0.1256   -0.1371
    0.1573   -0.1571
    0.1889   -0.1719
    0.2203   -0.1820
    0.2515   -0.1877
    0.2823   -0.1895
    0.3129   -0.1879
    0.3432   -0.1834
    0.3735   -0.1762
    0.4039   -0.1667
    0.4344   -0.1551
    0.4652   -0.1417
    0.4962   -0.1269
    0.5273   -0.1110
    0.5583   -0.0945
    0.5892   -0.0778
    0.6197   -0.0614
    0.6499   -0.0456
    0.6795   -0.0307
    0.7086   -0.0171
    0.7372   -0.0049
    0.7654    0.0055
    0.7933    0.0142
    0.8209    0.0209
    0.8485    0.0253
    0.8762    0.0274
    0.9041    0.0267
    0.9323    0.0230
    0.9608    0.0159
    0.9895    0.0050
    1.0182   -0.0099
    1.0465   -0.0288
    1.0741   -0.0516
    1.1009   -0.0779
    1.1267   -0.1075
    1.1516   -0.1403
    1.1759   -0.1762
    1.2000   -0.2159
    1.2241   -0.2601
    1.2486   -0.3095
    1.2735   -0.3647
    1.2988   -0.4259
    1.3240   -0.4926
    1.3488   -0.5635
    1.3727   -0.6372
    1.3955   -0.7124
    1.4170   -0.7881
    1.4374   -0.8646
    1.4574   -0.9430
    1.4773   -1.0255
    1.4976   -1.1143
    1.5187   -1.2110
    1.5404   -1.3161
    1.5626   -1.4286
    1.5845   -1.5457
    1.6056   -1.6638
    1.6252   -1.7788
    1.6432   -1.8880
    1.6595   -1.9906
    1.6744   -2.0880
    1.6887   -2.1836
    1.7030   -2.2817
    1.7177   -2.3862
    1.7332   -2.4991
    1.7494   -2.6202
    1.7658   -2.7469
    1.7818   -2.8745
    1.7969   -2.9980
    1.8106   -3.1132
    1.8229   -3.2188
    1.8340   -3.3161
    1.8444   -3.4091
    1.8548   -3.5030
    1.8656   -3.6026
    1.8770   -3.7107
    1.8891   -3.8266
    1.9014   -3.9466
    1.9132   -4.0644
    1.9239   -4.1733
    1.9331   -4.2678
    1.9406   -4.3460
    1.9466   -4.4098
    1.9518   -4.4647
    1.9567   -4.5179
    1.9621   -4.5757
    1.9681   -4.6422
    1.9749   -4.7166
    1.9819   -4.7943
    1.9883   -4.8672
    1.9935   -4.9260
    1.9968   -4.9636
    1.9980   -4.9768
 /\relax}\relax
\endpicture
}
\xfiglen = 1true in
\yfiglen = 0.1true in
\setbox\figureseven=\vbox{\hsize=\xfiglen
\beginpicture
\eightrm
  \setcoordinatesystem units <\xfiglen,\yfiglen>  point at 0 0
  \setplotarea x from 0 to 5, y from 0 to 20
  \axis bottom ticks short numbered from 0 to 5 by 0.5 /
  \axis left ticks short numbered from 2 to 20 by 2 /
\small
\eightrm
\put {$T$} [rb] at 5 0.1
\put {\copy\figurelegendseven} [rt] at 4.75 19
\linethickness=0.8pt
\multiput {$\bullet$} at 0.1 19  0.2 15  0.5 9   1 6  2 4.9  5 4 /
\multiput {$\circ$} at 0.1 20  0.2 17  0.5 10   1 7  2 5.1  5 5 /
\multiput {$\star$} at 0.1 7  0.2 6  0.5 5   1 4  2 4  5 3 /
\endpicture
}
\xfiglen = 0.5true in
\yfiglen = 2.1true in
\setbox\figureeight=\vbox{\hsize=\xfiglen
\beginpicture
\eightrm
  \setcoordinatesystem units <\xfiglen,\yfiglen>  point at 0 0
  \setplotarea x from 0 to 5, y from 0 to 0.8
  \axis bottom ticks short numbered from 0 to 5 by 0.5 /
  \axis left ticks short numbered from 0 to 0.8 by 0.2 /
\small
\eightrm
\put {$T$} [lb] at 5 0.01
\put {\copy\figurelegendtwo} [rt] at 4.5 0.8
\linethickness=0.8pt
\multiput {$\bullet$} at 
0.1 0.774
0.2 0.671
0.5 0.513
1.0 0.380
2.0 0.256
5.0 0.127 /
%
%\multiput {$\Blue{\circ}$} at
\multiput {${\circ}$} at
0.1 0.727
0.2 0.627
0.5 0.480
1.0 0.360
2.0 0.247
5.0 0.135 /
%
%\multiput {$\OliveGreen{\star}$} at 
\multiput {${\star}$} at 
0.1 0.500
0.2 0.434
0.5 0.348
1.0 0.286
2.0 0.229
5.0 0.166 /
%
%\multiput {$\Red{\diamond}$} at
\multiput {${\diamond}$} at
0.1 0.342
0.2 0.309
0.5 0.271
1.0 0.243
2.0 0.217
5.0 0.186 /
\endpicture
}
\yfiglen = 17true in
\setbox\figurenine=\vbox{\hsize=\xfiglen
\beginpicture
\eightrm
  \setcoordinatesystem units <\xfiglen,\yfiglen>  point at 0 0.05
  \setplotarea x from 0 to 5, y from 0.05 to 0.15
  \axis bottom shiftedto y=0.05 ticks short numbered from 0 to 5 by 0.5 /
  \axis left ticks short numbered from 0.05 to 0.15 by 0.05 /
\small
\eightrm
\put {$T$} [lb] at 5 0.053
\put {\copy\figurelegendtwo} [rt] at 4.25 0.15
\linethickness=0.8pt
\multiput {$\bullet$} at 
0.1 0.1236
0.2 0.1194
0.5 0.1118
1.0 0.1017
2.0 0.0948
5.0 0.0847 /
%
%\multiput {$\Blue{\circ}$} at
\multiput {${\circ}$} at
0.1 0.1213
0.2 0.1172
0.5 0.1115
1.0 0.1060
2.0 0.0997
5.0 0.0929 /
%
%\multiput {$\OliveGreen{\star}$} at 
\multiput {${\star}$} at 
0.1 0.1131
0.2 0.1118
0.5 0.1102
1.0 0.1088
2.0 0.1072
5.0 0.1046 /
%
%\multiput {$\Red{\diamond}$} at
\multiput {${\diamond}$} at
0.1 0.1098
0.2 0.1076
0.5 0.1068
1.0 0.1025
2.0 0.1077
5.0 0.1073 /
\endpicture
}
\subsection{Computational algorithms used.}

Here we describe some of the computational logistics behind 
the two approaches analysed in the previous sections:
that of the iteration scheme based on \eqref{eqn:iteration_scheme}
and those Newton-type methods based on computing the map and
the associated Jacobian from \eqref{eqn:Ffg} and \eqref{eqn:Fprime}.
In both methods the forwards solver was a standard Crank-Nicolson scheme
for the parabolic equation while in the fractional case 
the time stepping and computation of $D^\alpha_t$ at the final time
was based on a second order method introduced in \cite{GaoSunZhang:2014}.

The former of these methods is by far the simplest and without
question performed both exceedingly well and in a much superior way
to those based on the more complex Newton methods.
The algorithm can be summarized in the steps:
\begin{itemize}
\item{} Obtain the (noisy) data $g(x) = u(x,T;f)$ from some unknown $f(u)$
but given the values of the initial and boundary conditions under which
the solution $u$ of the direct problem was constrained.
\item{} 
Since we require the quantity $ \mathbb{L} g$ this must be computed in a stable 
manner. This was accomplished by using the fact that $u(x,T)$ must share
the prescribed boundary conditions and so the exact value can be approximated as
$\sum_n^N g_n \phi_n(x)$ where $\{\phi_n(x)\}$ is the orthogonal
set of eigenfunctions of $\mathbb{L}$ on $\Omega$.
Then $\mathbb{L} g$ can be written as $\sum_n^N g_n \lambda_n \phi_n(x)$.
Now we require that $\mathbb{L} g$ be at least as smooth as the desired
value for $f(g)$ to lie in $W^{1,\infty}$  and we do this
by a standard least squares fit with an appropriate penalty term
to ensure this level of smoothness in $ \mathbb{L} g$. Hence we used
the $\dH{\qq}$ norm where $\qq$ must be chosen in accordance with the
Sobolov Embedding Theorem value (see the previous sections).
\item{} 
At each iteration we must recover $f$ from $f(g) = u_t(x,T;f) - \mathbb{L} g$.
and we do so by representing $f$ at the (now smoothed) function $g$ in a
basis set of functions.
This is essentially the projections $P$ and ${\mathbb P}$
noted in \eqref{eqn:defP} and \eqref{eqn:u}.
Since we make no constraints on the form of $f$ other than sufficient regularity
we do not choose a basis with in-built restrictions as would be
obtained from an eigenfunction expansion.
Instead we used a radial basis of  shifted Gaussian functions
$b_j(x) := e^{-(x-x_j)^2/\sigma}$ centered at nodal points  $\{x_j\}$
and with width specified by the parameter $\sigma$.
Since we use a finite number of such analytic functions, the computed
$f$ at each iteration will be likewise smooth.
We compute the basis coefficients from a given $f$ in the standard way using
the Gram matrix of $\{e^{-(x-x_j)^2/\sigma}\}$.
It should be noted that no smoothing is necessary at this step - the entire
regularization of the problem being achieved by the smoothing of the noisy
data $g$.
\end{itemize}
\medskip

Figure \oldref{fig:Zeldovich_iteration}
shows the reconstruction of the reaction term
$f^{(a)}(u) = 2u(1-u)(u-a)$ with $a=0.75$.
This corresponds to a particular choice
of parameters in the Zeldovich model.
The initial approximation was $f(u) = 0$ and we show
iterations~1, 2, 5.
The latter represented effective numerical convergence and it is clear
that the second iteration was already very good.
The figure on the left shows the situation with 1\% added noise
and that on the right with 5\% noise.
Note that the iterations scheme itself was without regularization;
all of this was contained in the initial smoothing of the data $g$
(and also $g_{xx}$).
The parameters for this were chosen by the Discrepancy Principle.
However, it is clear that in the reconstruction from 5\% noise
that this resulted in an under-smoothing.
In all numerical runs based on the iteration scheme \eqref{eqn:iteration_scheme}
the initial approximation for $f$ was taken to be the zero function.

\begin{figure}[h]
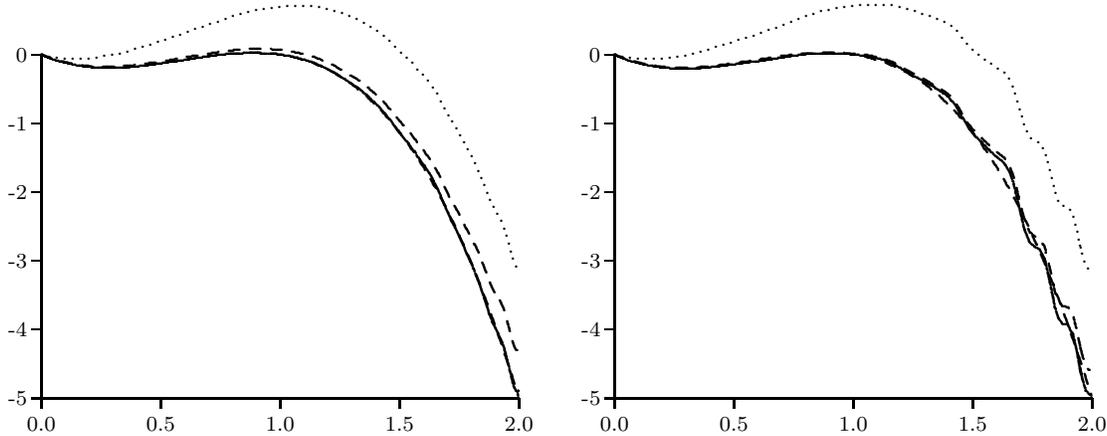

\bigskip
\hbox to\hsize{\hss\copy\figurefour\hss\hss\copy\figurefive\hss}
\caption{\small
Reconstructions of $f^{(a)}(u) = 2u(1-u)(u-a)$ with $a=0.75$ 
from 1\% and 5\% noise.
}
\label{fig:Zeldovich_iteration}
\end{figure}

Of course there are more challenging possible reaction terms
and Figure~\oldref{fig:ugly_func_iter}  shows the reconstruction of the
Lipschitz function given by
$$
f^{(b)}(u) = \begin{cases} 8u^2 &  u\leq \frac{1}{2}\\
                  (1+\cos(5(u-\frac{1}{2})))\,e^{-(u-\frac{1}{2})}
		   &  u > \frac{1}{2} \\
		   \end{cases}
$$
The data $g(x)$ had 1\% added noise and again effective numerical convergence
was obtained within 5 iterations and even iteration~2 was almost as close to
the original.

\begin{figure}[h]
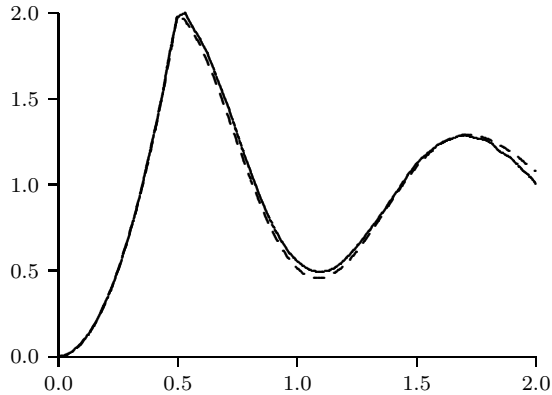

\hbox to\hsize{\hss\copy\figureone\hss}
\caption{\small
Reconstruction of $f^{(b)}(u)$ from 1\% noise.
}
\label{fig:ugly_func_iter}
\end{figure}

\bigskip
For the case of Newton schemes we of course must compute the derivative of the
map $F$ and its associated derivative through equations 
\eqref{eqn:Ffg} and \eqref{eqn:Fprime}.
The computations for these rely on the same solver families used for
the direct equation.
As usual, there are several versions of Newton schemes available.
For example,  we utilized the option of freezing the Jacobian at the initial
approximation to obviate what is the single most computationally intensive
item in each iteration.
In common with many nonlinear problems, the initial approximation
is quite important.
Choosing one at considerable distance from the actual frequently led to a
failure to converge. Even with a reasonable initial approximation it was
often prudent to implement some control over the length of the Newton step.
Although the convergence under such conditions was fairly rapid,
perhaps surprisingly, the number of iterations required
was almost always larger than that taken by the iteration scheme
\eqref{eqn:iteration_scheme}.

Figures~\oldref{fig:Newton_iter} show reconstructions of $f(u)=f^{(a)}(u)$ and
$f(u)=f^{(b)}(u)$ using Newton iteration.
In both cases the data $g$ was subject to 1\% noise.
The initial guess was taken to be of similar form:
$f^{(a)}_{\rm init}(u) = u(1-u)(u-\frac{1}{4})$ and
$f^{(b)}_{\rm init} = 1 + \sin(4u)$.
However, the norms of $f$ and $f_{\rm init}$ were quite far apart in both
cases.

The figures show the $10\,$th iteration although the rate clearly slowed down
by the fourth or fifth iteration which were already close to the one shown.

The use of frozen Newton gave almost similar results;
only a slight lag in the above convergence rate being noticed.
This of course shows that provided the initial approximation is sufficiently
good this version has a significantly lower computational cost.
Conversely, if the initial approximation is poorer then a hybrid approach
with initial shorter Newton steps followed by a holding of the derivative
after a certain point becomes a viable option.
\begin{figure}[h]
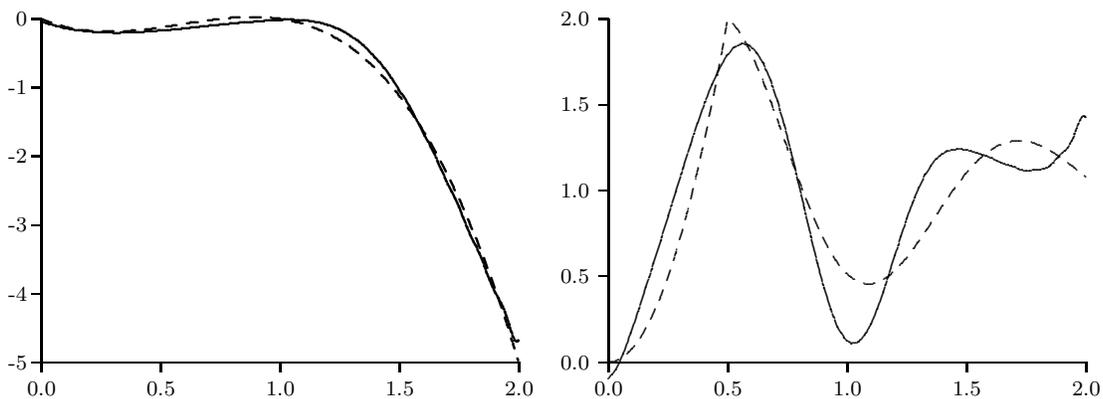

\hbox to\hsize{\hss\copy\figuresix\hss\copy\figuretwo\hss}
\caption{\small
Reconstructions of $\,f^{(a)}(u)$, $f^{(b)}(u)\,$ from 1\% noise using Newton iteration.
}
\label{fig:Newton_iter}
\end{figure}

We note that the issue of regularization parameter choice is far
from trivial in this case.
Our regularization of the (singular) Jacobian required a stronger norm 
than just $L^2$ and in fact we used a penalty term of the form
$\epsilon_0 I + \epsilon_2 R$ where $R$ was a smoothing matrix formed
from $\,R_{jk} = \int_{u_{min}}^{u_{max}} b_j(u)b_k(u)\lambda(u)\,du\,$
where the weight function $\lambda(u)$ allowed for differential smoothing
over the interval.

\subsection{Dependence on the final time $T$ and on $\alpha$}

The sign of $f(u)$ and of $f'(u)$  in the reaction-diffusion model
will make a difference
to the rate of convergence as will the time of measurement $T$.
For example, Theorem~\oldref{thm:contraction}
shows that the contractive nature of the map $\mathbb{T}$ depends on $T$
and the larger the value $T$ the smaller the contraction constant.
In the parabolic case this decay is in fact exponential but we cannot
expect the same in the fractional situation where the large time decay of
the exponential is replaced by the linear decay of the Mittag-Leffler function.

The leftmost figure in Figure~\oldref{fig:1_T_alpha}
shows the dependence on the choice of $T$ and
of $\alpha$ of the convergence rate as measured by the improvement
of the first iteration over the initial approximation $f = 0$,
 namely $\|f^{(a)}_{\rm iter1}\|_\infty$.
This is a more definitive measure of the effect of the parameters
$T$ and $\alpha$ than number of iterations to  achieve effective convergence
as the latter involves a degree of subjectivity. However both measures
give consistent results.

\begin{figure}[h]
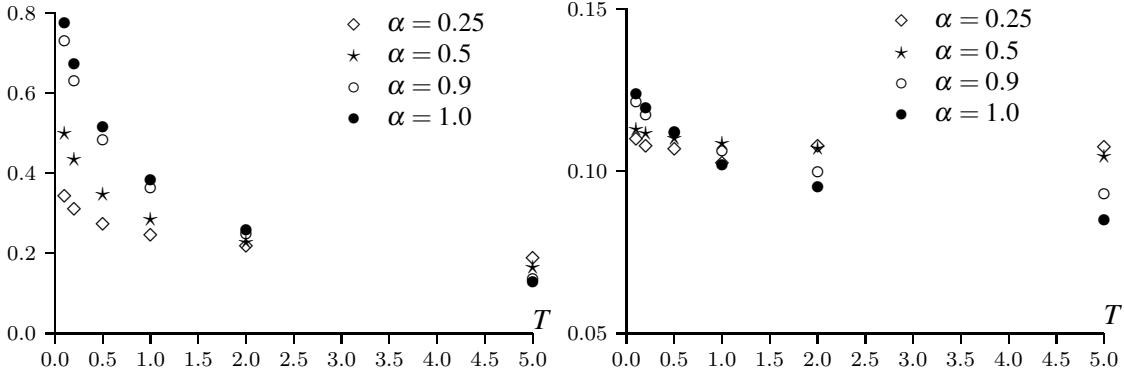

\hbox to\hsize{\hss\copy\figureeight\hss\copy\figurenine\hss}
\caption{\small Variation of $\|f^{(a)}_{\rm iter1}-f^{(a)}_{\rm init}\|$
%\caption{\small Variation of $\|f^{(a)}_{\rm iter1}\|$
with $T$ and $\alpha$:\quad Left: iteration scheme; Right: Newton}
\label{fig:1_T_alpha}
\end{figure}

In the parabolic case the results are as predicted by
Theorem~\oldref{thm:contraction}; the number of steps required for a given
accuracy decreases as $T$ increases.
What is perhaps surprising given the only linear asymptotic behavior of
the Mittag-Leffler function, is the fact the convergence for large values of 
$T$ is at least as good in the fractional case and, indeed, it is considerably
better for small values of the final time $T$.

The rightmost figure in Figure~\oldref{fig:1_T_alpha} show the corresponding results for Newton's
method. Here we again only look at the difference in $L^\infty$ norm of
the initial approximation of $f^{(a)}_{\rm init}$ of $f^{(a)}(u)$
obtained after the first iteration.

In the parabolic case there is a definite improvement in the convergence rate
as $T$ increases. However, this feature diminishes with decreasing $\alpha$
and by $\alpha=0.25$ was almost imperceptible.
Note that the improvements obtained by the first approximations
shown in the above figures should
not be directly compared as the iteration scheme \eqref{eqn:iteration_scheme}
started from an initial approximation of $f^{(a)}_{\rm init}=0$
whereas the Newton scheme started from
$f^{(a)}_{\rm init}(u) = u(1-u)(u-\frac{1}{4})$.

\revision{
\section{Epilogue}
Reaction-diffusion equations are the building blocks for the
mathematical modeling of a wide range of applications.
In many cases these models make a specific-form ansatz for both the diffusion
and reaction mechanisms and contain parameters that have to be determined
in order to prime the system for subsequent predictive behaviour of the
quantity $u$ of prime interest.
This has traditionally been accomplished by a least squares fit to a
relatively small number of parameters.
In this paper we have sought to go beyond this by making no prior assumptions
about the qualitative behaviour of the unknown reaction term $f(u)$ but
recovering a full functional form from measured information consisting of the
variable $u$ at a fixed time.
Such data is very natural in many applications; for example, in the case of
ecology models it corresponds to a population census.
% \Margin{Ref.2, 2.,3.,.4.}
\\
We have assumed that the diffusive process is known, including any parameters
involved in this component.
Thus the diffusion coefficient $a(x)$ and any transport term
$b(x)$ is assumed known as well as whether diffusion is through a Brownian
or a particular version of an anomalous process.
Other random walk models are possible and occur frequently in applications.
This paper has concentrated on the case of a process where the
probability density function for the mean waiting time may be unbounded
leading to a time time derivative of fractional order $\alpha<1$.
In the limiting case when the mean waiting time is bounded, that is,
$\alpha=1$, we of course obtain the regular parabolic operator
and this is also covered by our analysis.
The recovery of $f$, in particular the convergence rate of the
numerical algorithms, depends on $\alpha$ and also the final
time of measurement $T$, and the interplay between these quantities
is quite complex. 
\\
We also noted the possibility of a fractional power in the space variable.
However, this was through the fractional power of the elliptic operator
$-\mathbb{L}$ rather than through the similar process of fractional order
spatial derivatives that would arise if our assumption on the random walk
process took a spatial probability density function with an infinite variance.
While the latter model has a definite physical basis, the replacement of
$-\mathbb{L}$ by its fractional power $(-\mathbb{L})^\sigma$
has a less clear physical connection.
It does have mathematical elegance and convenience for analysis and
computation and this has lead to its popularity.
Discovering the interplay between the fractional power constants
$\alpha$, $\sigma$, the diffusion coefficient $a(x)$
and the inversion of $f(u)$ would be very interesting.
\\
The fact that the model considered allows for spatially-dependent coefficients
is important beyond mathematical generality;
this could be an essential factor in the modeling aspect.
For example, in ecology, certain areas could be either  rich in food or
less hospitable and thus support a different $f(u)$ term.
In this respect the current paper is complementary to
\cite{KaltenbacherRundell:2019b}
where it was assumed that the reaction term $f(u)$ was known but
the coefficient that quantified this environmental factor had to be determined.
\\
From a mathematical aspect the standard parabolic reaction diffusion equation
has considerable complexity.
For example, without a growth constraint on $f(u)$ the solution $u$ can
``blow up'' in finite time.
Even if we measure our data -- the solution $u$ -- at a time $T$,  where $T$ is
less than this blow-up time,  we are likely to have extreme values occurring
if the leading term in $f(u)$ is asymptotically near to the critical exponent.
Our reconstruction  methods had to be set in a suitable space to prevent such
an event occurring during the iterative process.
The regularity of the non-homogeneous parabolic equation is well understood
and there are critical gaps in the Schauder space regularity of the
solution in the time variable that prevented a straightforward analysis
of the convergence of the iteration schemes used in reconstructing $f(u)$.
In Sobolev spaces the embedding schemes between these spaces
forced much of the analysis to hold only in one space variable.
In the case of time-fractional diffusion the situation is more complicated.
The subdiffusion ($\alpha<1$) equation has even more restricted regularity
properties due to the singularity at $t=0$ that causes at most a two
derivative pick-up in smoothness of the solution $u(:,t)$ over that of $f$
whereas the parabolic equation has an infinite smoothness pick-up.
For reasons such as these, this paper is quite technical in parts and,
as a compromise on length, has a rather dense exposition which we have tried
to ameliorate as much as possible.
\\
In conclusion, this paper has only touched the surface of an exceedingly
nontrivial problem.  There remain challenging mathematical questions
in incorporating more complex diffusion models into the framework as well
as the myriad of cases that could occur in terms of finding optimal 
driving conditions such as initial and boundary data to best recover a given
type of function $f$.
Clearly, in higher space dimensions and for space and time-fractional models
there are significant obstacles in developing both effective numerical methods
and providing the required numerical analysis.
While we have concentrated on the featured iteration scheme based on the
map involving $u_t(:,T)$ there remain definite opportunities for extending
the scope of Newton-type methods and finding regularization
strategies for the effective inversion of the data to solve the
inverse problem of reconstructing $f$.
}
\section*{Acknowledgment}

\noindent
The work of the first author was supported by the Austrian Science Fund {\sc fwf}
under the grants I2271 and P30054.

\noindent
The work of the second author was supported 
in part by the
National Science Foundation through award {\sc dms}-1620138.

\noindent
\revision{
The authors wish to thank the reviewers for their careful reading of the manuscript and their detailed reports with valuable comments and suggestions that have led to an improved version of the paper.
}
%\revrev{
%\section*{Appendix}
%\begin{lemma}\label{lem:chainrule}
%For $a<b$, $c<d$, $\tau\in(0,1)$, and functions $\psi\in W^{1,\infty}(c,d)\cap H^{1+\tau}(c,d)$, $\phi\in W^{1,\infty}(a,b)\cap H^{1+\tau}(a,b)$ with $\phi(a,b)=(c,d)$ and $\|\frac{1}{\phi'}\|_{L^\infty}=:C<\infty$ the concateneation $\psi\circ\phi$ is contained in $W^{1,\infty}(a,b)\cap H^{1+\tau}(a,b)$ and for the $H^{1+\tau}$ seminorm the estimate 
%\[
%|\psi\circ\phi|_{H^{1+\tau}(a,b)}
%=\|(\psi\circ\phi)'\|_{H^{\tau}(a,b)}\leq \sqrt{C}\|\phi'\|_{L^\infty(a,b)}^{1+\tau} \|\psi'\|_{H^{\tau}(c,d)}+\|\psi'\|_{L^\infty(c,d)} \|\phi'\|_{H^{\tau}(a,b)} 
%\]
%holds.
%\end{lemma}
%\begin{proof}
%The estimate follows from the decomposition 
%\[
%\begin{aligned}
%&|\psi\circ\phi|_{H^{1+\tau}(a,b)}
%=\left(\left(\int_a^b\int_a^b \left(\frac{\psi'(\phi(s))\phi'(s)-\psi'(\phi(t))\phi'(t)}{(s-t)^{1/2+\tau}}\right)^2\, ds\, dt\right)\right)^{1/2}\\
%&\leq\left(\left(\int_a^b\int_a^b \left(\frac{\psi'(\phi(s))-\psi'(\phi(t))}{(\phi(s)-\phi(t))^{1/2+\tau}}\right)^2 
%\left(\frac{\phi(s)-\phi(t)}{s-t}\right)^{1+2\tau} \frac{\phi'(s)}{\phi'(t)} \, \phi'(s)ds\, \phi'(t)dt\right)\right)^{1/2}\\
%&\quad+\left(\left(\int_a^b\int_a^b \psi'(\phi(t))^2\, \left(\frac{\phi'(s)-\phi'(t)}{(s-t)^{1/2+\tau}}\right)^2\, ds\, dt\right)\right)^{1/2}\\
%\end{aligned}
%\]
%with the substitution $x=\phi(s)$, $y=\phi(t)$.\\
%Moreover, clearly $\|(\psi\circ\phi)'\|_{L^\infty(a,b)}\leq \|\phi'\|_{L^\infty(a,b)} \|\psi'\|_{L^\infty(c,d)}$.
%\end{proof}
%}
%%%%%%%%%%%%%%%%%%%%%%%%%%%%%%%%%%%%%%%%%%%%

\end{document}